\newtheorem{thm}{Theorem}[section]
\newtheorem{cor}[thm]{Corollary}
\newtheorem{lem}[thm]{Lemma}
\newtheorem{prop}[thm]{Proposition}
\theoremstyle{definition}
\newtheorem{defn}[thm]{Definition}
\theoremstyle{remark}
\newtheorem{rem}[thm]{Remark}
\numberwithin{equation}{section}
\newcommand{\be}{\begin{equation}}
\newcommand{\ee}{\end{equation}}
\newcommand{\R}{\mathbb R}
\newcommand{\eps}{\epsilon}
\newcommand{\p}{\partial}
\newcommand{\di}{\displaystyle}
\newcommand{\comment}[1]{}
\begin{document}

\title[The thin one-phase problem]{Regularity of Lipschitz free boundaries for the thin one-phase problem.}
\author{D. De Silva}
\address{Department of Mathematics, Barnard College, Columbia University, New York, NY 10027}
\email{\tt  desilva@math.columbia.edu}
\author{O. Savin}
\address{Department of Mathematics, Columbia University, New York, NY 10027}\email{\tt  savin@math.columbia.edu}

\keywords{Energy Minimizers; One-phase free boundary problem; Monotonicity Formula.}

\begin{abstract}We study regularity properties of the free boundary for the thin one-phase problem which consists of minimizing the energy functional 
$$\label{E} E(u,\Omega) = \int_\Omega |\nabla u|^2 dX + \mathcal{H}^n(\{u>0\} \cap \{x_{n+1} = 0\}), \quad \Omega \subset \R^{n+1},$$ among all functions $u\ge 0$ which are fixed on $\p \Omega$.

We prove that 
the free boundary $F(u)=\p_{\R^n}\{u>0\}$ of a minimizer $u$ has locally finite $\mathcal{H}^{n-1}$ measure and is a $C^{2,\alpha}$ surface except on a small singular set  of Hausdorff dimension $n-3$. 
We also obtain $C^{2,\alpha}$ regularity of Lipschitz free boundaries of viscosity solutions associated to this problem.
 \end{abstract} 
\maketitle

\section{Introduction}

In this paper we study minimizers $u$ of the energy functional $E$ associated to the {\it thin one-phase} problem \be \label{E} E(u,\Omega) := \int_\Omega |\nabla u|^2 dX + \mathcal{H}^n(\{(x,0) \in  \Omega : u(x,0)>0\}),\ee
where $\Omega \subset \R^{n+1} = \R^n \times \R$ and points in $\R^{n+1}$ are denoted by $X=(x,x_{n+1})$. 

We are mainly concerned with the regularity of the free boundary of minimizers $u$, that is the set  $$F(u) : = \p_{\R^{n}}\{u(x,0)>0\} \cap \Omega \subset \R^n. $$

We also consider viscosity solutions to the thin one-phase problem  (see problem \eqref{FBintro} below) and investigate the regularity of Lipschitz free boundaries. 

Throughout  this paper we consider only  domains  $\Omega$ and solutions $u$ such that $$\text{$\Omega$ is
symmetric with respect to $\{x_{n+1} =0\}$},$$$$u \geq 0 \quad \text{is even with respect to $x_{n+1}$}.$$

The thin one-phase problem is closely related to the classical Bernoulli free boundary problem (or one-phase problem) where the second term of the energy $E$ is replaced by $\mathcal H^{n+1}(\{u>0\})$. In our setting the set $\{u=0\}$ occurs on the lower dimensional subspace $\R^n \times \{0\}$ and the free boundary is expected to be $n-1$ dimensional whereas in the classical case the free boundary is $n$-dimensional (lying in $\R^{n+1}$).    
There is a wide literature on the regularity theory for the free boundary in the standard Bernoulli problem which has similarities to the regularity theory of minimal surfaces, see for example \cite{AC, ACF, C1,C2,C3, CJK, CS, DJ1, DJ2}.

The thin one-phase problem was first introduced by Caffarelli, Roquejoffre and Sire in \cite{CRS} as a variational problem involving fractional $H^s$ norms. Such problems are relevant in classical physical models in mediums where long range (non-local) interactions are present, see \cite{CRS} for further motivation. For example, if $u$ is a local minimizer of $E$ defined in $\R^{n+1}$ then its restriction to the $n$-dimensional space $\R^n \times \{0\}$ minimizes locally an energy of the type
$$c_n\|u\|_{H^{1/2}} + \mathcal H^{n}\{ u>0\}.$$

In \cite{CRS} the authors obtained the optimal regularity for minimizers $u$, the free boundary condition along $F(u)$ and proved that, in dimension $n=2$, Lipschitz free boundaries are $C^1$. The question of the regularity of the free boundary in higher dimensions was left open. 
In \cite{DR} De Silva and Roquejoffre studied viscosity solutions of the thin one-phase problem associated to the energy $E$ and showed that flat free boundaries are $C^{1,\alpha}$. Motivated by the present paper, the current authors improved this result to $C^{2,\alpha}$ regularity. This estimate and some basic theorems for viscosity solutions were obtained in \cite{DS} and they play a crucial role in the present paper (see Section 2).  

The thin two-phase problem, that is when $u$ is allowed to change sign, was considered by Allen and Petrosyan in \cite{AP}. They showed that the positive and negative phases  are always separated thus the problem reduces locally back to a one-phase problem. They also obtained a Weiss type monotonicity formula for minimizers and proved that, in dimension $n=2$, the free boundary is $C^1$ in a neighborhood of a regular point. 

The main difficulty in the thin-one phase problem occurs near the free boundary where all derivatives of $u$ blow up and the problem becomes degenerate. 
The method developed by Caffarelli in \cite{C1, C2} for the $C^{1,\alpha}$ regularity of the free boundary in the standard one-phase problem does not seem to apply in this setting. The question of higher regularity is also delicate.
 
In this paper we obtain regularity results for Lipschitz free boundaries based on a Weiss type monotonicity formula and on the $C^{2,\alpha}$ estimates for flat solutions. 
The monotonicity formula is used in a standard blow-up analysis near the free boundary and reduces the regularity question to the problem of classifying {\it global cones} i.e global solutions which are homogenous of degree $1/2$. The $C^{2,\alpha}$ estimate for flat solutions allows us to show that all Lipschitz cones are trivial.  
This general strategy of obtaining regularity of Lipschitz solutions applies also to the classical one-phase problem and to the minimal surface equation, providing different proofs than the ones of Caffarelli \cite{C1} for the one-phase, and of De Giorgi  \cite{DG} for the minimal surface equation.

Our first main result deals with the regularity of the free boundaries for minimizers. We show that $F(u)$ is a $C^{2,\alpha}$ surface except possibly on a small singular set. 

\begin{thm}\label{thm1} Let $u$ be a minimizer for $E$. The free boundary $F(u)$ is locally a $C^{2,\alpha}$surface, except on a singular set  $\Sigma_u \subset F(u)$ of Hausdorff dimension $n-3$, i.e. $$\mathcal{H}^{s}(\Sigma_u) =0 \quad \mbox{ for $s>n-3.$}$$ Moreover, $F(u)$  has locally finite $\mathcal{H}^{n-1}$ measure.\end{thm}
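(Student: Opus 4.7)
The strategy is the one outlined in the introduction: a Weiss-type monotonicity formula reduces the local study of $F(u)$ to a classification of global minimizing cones, and the $C^{2,\alpha}$ flatness estimate of \cite{DS} identifies the regular ones. First, for a minimizer $u$ and $X_0\in F(u)$, I would establish a monotonicity formula of the form
\[
W(r,X_0) = \frac{1}{r^n}\int_{B_r(X_0)}|\nabla u|^2\,dX + \frac{1}{r^n}\mathcal{H}^n\bigl(\{u(\cdot,0)>0\}\cap B_r(X_0)\bigr) - \frac{1}{2r^{n+1}}\int_{\partial B_r(X_0)} u^2,
\]
which is nondecreasing in $r$, with equality iff $u$ is $1/2$-homogeneous about $X_0$. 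Combined with the known optimal $C^{1/2}$ bound and non-degeneracy of minimizers, the rescalings $u_r(X) := r^{-1/2}u(X_0+rX)$ form a compact family, and every blow-up limit $u_0$ is a global minimizing cone, i.e.\ a $1/2$-homogeneous global minimizer.

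Call $X_0$ regular if some blow-up at $X_0$ is the one-dimensional half-plane profile $c_0(x\cdot\nu)_+^{1/2}$. At such a point $u$ is flat at a small scale, so the flatness-implies-$C^{2,\alpha}$ theorem of \cite{DS} yields $C^{2,\alpha}$ regularity of $F(u)$ in a neighborhood, and the regular set $\mathrm{Reg}(u)$ is open in $F(u)$. To control $\Sigma_u := F(u)\setminus\mathrm{Reg}(u)$, the crucial input is the absence of nontrivial minimizing cones in low dimension: any blow-up cone in $\R^{2}\times\R$ has a $1$-dimensional free boundary in $\R^2$, which by homogeneity is a finite union of rays from the origin; each ray is trivially a Lipschitz graph, so the paper's triviality-of-Lipschitz-cones result (a consequence of the flat $C^{2,\alpha}$ estimate) forces $u_0$ to be a half-plane profile. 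Hence $\Sigma_u=\emptyset$ when $n\leq 2$.

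Given this classification, standard Federer dimension reduction yields $\mathcal{H}^s(\Sigma_u)=0$ for $s>n-3$. Indeed, if $\dim_{\mathcal H}\Sigma_u>n-3$, then at a point of positive density of $\Sigma_u$ one may take secondary blow-ups of a blow-up cone at a point away from the origin, splitting off translation-invariant directions; iterating this procedure eventually produces a nontrivial minimizing cone in dimension $n=2$, contradicting the previous step. Local finiteness of $\mathcal H^{n-1}(F(u))$ then follows from uniform density estimates for $\{u(\cdot,0)>0\}$ at free boundary points (from non-degeneracy and the Lipschitz bound) combined with a Vitali covering, giving the standard perimeter-type bound $\mathcal H^{n-1}(F(u)\cap B_r(X_0))\leq C r^{n-1}$.

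I expect the main obstacle to be the $n=2$ classification. One must verify rigorously that a $1/2$-homogeneous minimizer on $\R^2\times\R$ has its trace on $\{x_3=0\}$ supported away from the origin on finitely many rays, so that the hypothesis of the Lipschitz-cones-are-trivial theorem is genuinely satisfied at every free boundary ray. This requires combining the non-degeneracy of minimizers with the finiteness of a $1$-dimensional energy on the unit sphere to rule out accumulation of free boundary rays. Once this step is in place, the blow-up plus dimension-reduction argument proceeds in parallel with De Giorgi's treatment of minimal surfaces and Caffarelli's treatment of the classical one-phase problem, with the $C^{2,\alpha}$ estimate of \cite{DS} closing the loop at regular points.
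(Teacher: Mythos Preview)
Your overall architecture (monotonicity formula, blow-ups to $1/2$-homogeneous cones, flatness $\Rightarrow C^{2,\alpha}$, Federer dimension reduction) matches the paper exactly. The gap is in the $n=2$ classification of minimizing cones. You propose to deduce it from the paper's triviality of \emph{Lipschitz} viscosity cones, arguing that in $\R^{2+1}$ the free boundary of a minimizing cone is a finite union of rays, ``each ray is trivially a Lipschitz graph.'' But the hypothesis of that proposition is that $F(U)$ is globally a Lipschitz graph in some fixed direction; a finite union of rays (e.g.\ the boundary of a sector of angle $\ne\pi$, or several sectors) is not, and the proof of that proposition uses the global graph property in an essential way, through the cone of monotonicity directions. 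Knowing that $F(U)$ is smooth away from the origin does not give a direction of monotonicity for $U$, so the argument cannot be closed this way. The paper handles the $n=2$ case by a completely different \emph{variational} argument (the Savin--Valdinoci translated-competitor trick): one compares $U$ with its shifts $U(\cdot\pm\psi_R e_1)$ using a logarithmic cutoff $\psi_R$, shows the energy defect is $O(1/\log R)$, and then derives a contradiction from $\min(U,U(\cdot-e_1))$ failing to be harmonic. This uses minimality directly and requires no Lipschitz hypothesis.

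A second, smaller issue: density estimates plus a Vitali covering control $\sum r_i^{\,n}$, not $\sum r_i^{\,n-1}$, so they do not by themselves give $\mathcal H^{n-1}(F(u)\cap B_r)\le Cr^{n-1}$. The paper instead uses a compactness argument that relies on $\mathcal H^{n-1}(\Sigma_u)=0$: one covers the singular set by balls with small total $(n-1)$-content, and on the remainder $F(u)$ is $C^{2,\alpha}$ with uniform bounds; an iteration of this covering yields the finite $\mathcal H^{n-1}$ bound.
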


As a corollary we obtain that in dimension $n=2$, free boundaries of minimizers are always $C^{2,\alpha}.$

As mentioned above we also study the regularity of Lipschitz free boundaries of viscosity solutions to the Euler-Lagrange equation associated to the minimization problem for $E$, that is the  following thin one-phase free boundary problem, 

\begin{equation}\label{FBintro}\begin{cases}
\Delta u = 0, \quad \textrm{in $ 
\Omega \setminus \{(x,0) : u(x,0) = 0 \} ,$}\\
\dfrac{\p u}{\p U_0}= 1, \quad \textrm{on  $F(u):= \p_{\R^{n}}\{u(x,0)>0\} \cap \Omega$.} 
\end{cases}\end{equation}Here the free boundary condition reads\begin{equation}\label{nabla_U}
\dfrac{\p u}{\p U_0}(x_0): = \di\lim_{t \rightarrow 0^+} \frac{u(x_0+t\nu(x_0),0)}{\sqrt t},  \quad x_0 \in F(u) \end{equation} with $\nu(x_0)$ the normal to $F(u)$ at $x_0$ pointing toward  $\{x : u(x,0)>0\}.$ 
We prove the following result (see Section 2 for the definition of viscosity solution).

\begin{thm}\label{thm2} Let $u$ be a viscosity solution to \eqref{FBintro} in $B_1$, $0 \in F(u)$ and assume that $F(u)$ is a Lipschitz graph in the $e_n$ direction  with Lipschitz constant $L.$ Then $F(u) \cap B_{1/2}$ is a $C^{2,\alpha}$ graph for any $\alpha<1$ and its $C^{2,\alpha}$ norm is bounded by a constant that depends only on $n,L$ and $\alpha.$\end{thm}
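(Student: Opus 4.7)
The plan is the three-step strategy advertised in the introduction: a blow-up analysis based on a Weiss-type monotonicity formula reduces the theorem to a Liouville-type classification of global $1/2$-homogeneous viscosity solutions whose positivity set on $\{x_{n+1}=0\}$ is a Lipschitz cone; one classifies all such ``Lipschitz cones'' as the one-dimensional half-plane solutions $U_0(x\cdot\nu)$; and then the $C^{2,\alpha}$ flatness theorem for viscosity solutions from \cite{DS} upgrades the resulting flatness at small scales into $C^{2,\alpha}$ regularity of $F(u)$, with constants depending only on $n,L,\alpha$.

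First I would fix $x_0\in F(u)\cap B_{1/2}$ and study the rescalings $u_{x_0,r}(X):=r^{-1/2}u(x_0+rX)$. These are viscosity solutions of \eqref{FBintro} in ever larger balls whose free boundaries are Lipschitz graphs in $e_n$ with the same constant $L$. Thanks to the basic compactness and optimal-growth results for viscosity solutions recalled in Section 2, combined with a Weiss-type monotonicity formula in the spirit of \cite{AP}, any subsequential limit $U=\lim u_{x_0,r_k}$ exists and is a global $1/2$-homogeneous viscosity solution; its positivity set on $\{x_{n+1}=0\}$ is then a Lipschitz cone of constant $\le L$.

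The main obstacle is step two, and here is where the $C^{2,\alpha}$ (rather than merely $C^{1,\alpha}$) strength of the flatness theorem of \cite{DS} becomes essential. To show $U=U_0(x\cdot\nu)$ for some $\nu\in S^{n-1}$, I would induct on $n$, with the cases of small $n$ handled directly (a $1$-homogeneous Lipschitz graph into $\R$ over $\R$ or $\R^0$ is automatically linear). For the inductive step, pick any $y_0\in F(U)\setminus\{0\}$. The $1/2$-homogeneity of $U$ forces the full ray $\{\lambda y_0:\lambda>0\}$ to lie in $F(U)$, so the blow-up of $U$ at $y_0$ is translation invariant along $y_0$ and is itself a Lipschitz $1/2$-homogeneous cone, but effectively in one fewer transverse dimension; by the inductive hypothesis it is a half-plane solution. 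Hence $F(U)$ is $\varepsilon$-flat at every nonzero point, and \cite{DS} gives that $F(U)$ is $C^{2,\alpha}$ away from $0$. Writing $F(U)=\{x_n=g(x')\}$ with $g$ Lipschitz, $1$-homogeneous, and $C^{2,\alpha}$ on $\R^{n-1}\setminus\{0\}$, it remains to show $g$ is linear. Since homogeneity reproduces $U$ at every scale about $0$, any flatness of $F(U)$ on $B_1$ is replicated on every dyadic annulus; feeding this back into the $C^{2,\alpha}$ flatness theorem at the origin scale rigidifies $\nabla g$ through the origin and forces $g(x')=\nu'\cdot x'$, so $U=U_0(x\cdot\nu)$.

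Granted the cone classification, the theorem is immediate: at every $x_0\in F(u)\cap B_{1/2}$ some rescaling $u_{x_0,r}$ is $\varepsilon_0$-flat in the sense required by \cite{DS}, so a single application of the $C^{2,\alpha}$ flatness theorem from that paper yields $C^{2,\alpha}$ regularity of $F(u)$ near $x_0$, and a covering argument on $F(u)\cap B_{1/2}$ together with the uniformity of the constants in the blow-up analysis produces the quantitative bound with dependence only on $n,L,\alpha$.
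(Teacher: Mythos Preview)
Your overall architecture---blow up using the Weiss-type monotonicity formula, classify Lipschitz $1/2$-homogeneous cones, then invoke the flatness theorem---is exactly the paper's strategy, and your dimension-reduction step to conclude that $F(U)$ is $C^{2,\alpha}$ away from the origin is also correct. The gap is in the very last move of the cone classification.

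Your claim that ``homogeneity\dots feeding this back into the $C^{2,\alpha}$ flatness theorem at the origin scale rigidifies $\nabla g$'' is circular: the flatness theorem requires $\bar\eps$-flatness in $B_1$ as \emph{input}, and that is precisely what you are trying to prove. Knowing only that $g$ is $1$-homogeneous, Lipschitz, and $C^{2,\alpha}$ on $\R^{n-1}\setminus\{0\}$ does \emph{not} force linearity---for $n=2$ the function $g(x_1)=a_+x_1^+ - a_-x_1^-$ with $a_+\ne a_-$ already shows this, so your base case ``a $1$-homogeneous Lipschitz graph over $\R$ is automatically linear'' is false as a geometric statement. What rules out such cones is the PDE, and you have not used it at this step.

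The paper supplies the missing mechanism. From the Lipschitz graph hypothesis one first proves (Proposition~\ref{mon_prop}, via boundary Harnack) that $U$ is monotone in every direction of the cone $\mathcal C=\{\xi_n\ge L|\xi'|\}$; hence $U_\tau\ge 0$ for all $\tau\in\mathcal C$. Since the Lipschitz constant of $F(U)$ is exactly $L$, some $\tau\in\partial\mathcal C$ is tangent to $F(U)$ at a regular point $X_0\ne 0$. If $U_\tau$ vanishes somewhere in $\{U>0\}$ then $U_\tau\equiv 0$ and one reduces dimension. Otherwise $U_\tau>0$, and boundary Harnack gives $U_\tau\ge\delta\,U$ near $X_0$. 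This is where the $C^{2,\alpha}$ (not merely $C^{1,\alpha}$) strength of \cite{DS} enters: Lemma~\ref{utau} shows that at a regular free boundary point the \emph{tangential} derivative satisfies $|U_\tau(X_0+r\nu)|\le K r^{1/2+\alpha}$, which contradicts $U_\tau\ge\delta\,U\gtrsim r^{1/2}$ for small $r$. That contradiction is the engine that forces the cone to be trivial; your proposal never invokes it.
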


The paper is organized as follows. In Section 2 we introduce notation and recall definitions and some necessary results from \cite{DS} about viscosity solutions to \eqref{FBintro}. Section 3 is devoted to minimizers of $E$. We prove general theorems which were obtained also in \cite{CRS} and \cite{AP}, such as existence, optimal regularity, non-degeneracy, and compactness. 
We also show that minimizers are viscosity solutions to \eqref{FBintro} (with 1 replaced by the appropriate constant).  In Section 4  we prove a Weiss type monotonicity formula for minimizers of $E$ and also for  viscosity solutions to  \eqref{FBintro} which have Lipschitz free boundaries.  Section 5  deals with minimal cones, that is minimizers of $E$ that are homogeneous of degree $1/2$.  We obtain that the only minimal cones in $\R^{2+1}$ are the trivial ones, and from that we deduce our main Theorem \ref{thm1} by a dimension reduction argument. Finally in the last section we use the flatness theorem and the monotonicity formula  and prove Theorem \ref{thm2}.

\section{Viscosity Solutions}

In this section we introduce notation and recall definitions and some necessary results from \cite{DR, DS}. 

\subsection{Notation.} Throughout the paper, constants which depend only on the dimension $n$ will be called universal.  In general, small constants will be denoted by $c$ and large constants by $C,$ and they may change from line to line in the body of the proofs. The dependence on parameters other than $n$ will be explicitly noted.

A point $X \in \R^{n+1}$ will be denoted by $X= (x,x_{n+1}) \in \R^n \times \R$.
A ball in $\R^{n+1}$ with radius $r$ and center $X$ is denoted by $B_r(X)$ and for simplicity $B_r = B_r(0)$.  We use $B_r^+(X)$ to denote the upper ball  $$B_r^+(X):=B_r(X) \cap \{x_{n+1} >0\}.$$ Also, we write $$\mathcal B_r(X) = B_r(X) \cap \{x_{n+1}=0\}.$$

Let $v \in C(\Omega),$ be a non-negative function on a bounded domain $ \Omega \subset \R^{n+1}$. We associate to $v$ the following sets: \begin{align*}
& \Omega^+(v) := \Omega \setminus \{(x,0) : v(x,0) = 0 \} \subset \R^{n+1};\\
& F(v) := \p_{\R^n} \{v(x,0) > 0\} \cap \Omega \subset \R^{n}.
\end{align*}  Often subsets of $\R^n$ are embedded in $\R^{n+1}$, as it will be clear from the context. 

\subsection{Definition and properties of viscosity solutions.} We consider the thin one-phase free boundary problem ($u \geq 0$)

\begin{equation}\label{FB}\begin{cases}
\Delta u = 0, \quad \textrm{in $\Omega^+(u) ,$}\\
\dfrac{\p u}{\p U_0}= 1, \quad \textrm{on $F(u)$}, 
\end{cases}\end{equation}
where 
\be\label{deffb}\dfrac{\p u}{\p U_0}(x_0):=\di\lim_{t \rightarrow 0^+} \frac{u(x_0+t\nu(x_0),0)} {\sqrt t} , \quad \textrm{$X_0=(x_0,0) \in F(u)$}.\ee
Here $\nu(x_0)$ denotes the unit normal to $F(u)$, the free boundary of $u$, at $x_0$ pointing toward $\{u(x,0)>0\}$.

Our notation for the free boundary condition is justified by the following fact. If $F(u)$ is $C^2$ then any function $u \geq 0$ which is harmonic in $\Omega^+(u)$ has an asymptotic expansion at a point $ X_0=(x_0,0)\in F(u),$
$$u(X) = \alpha(x_0) U_0((x-x_0) \cdot \nu(x_0), x_{n+1})  + o(|X-X_0|^{1/2}),$$
where $U_0(t,s)$ is the real part of $\sqrt{z}$. Thus
in the polar coordinates $$t= r\cos\theta, \quad  s=r\sin\theta, \quad r\geq 0, \quad -\pi \leq  \theta \leq \pi,$$ $U_0$ is given by
\begin{equation}\label{U}U_0(t,s) = r^{1/2}\cos \frac \theta 2. \end{equation}
Then, the limit in \eqref{deffb} represents the coefficient $\alpha(x_0)$ in the expansion above 
$$\frac{\p u}{\p U_0}(x_0)=\alpha(x_0)$$ 
and our free boundary condition requires that $\alpha \equiv 1$ on $F(u).$ 

The precise result proved in \cite{DS} (Lemma 7.5) is stated below and will be often used in this paper.

\begin{lem}[Expansion at regular points from one side]\label{oneside}
Let $w \in C^{1/2}(B_1)$ be $1/2$-Holder continuous, $w \ge 0$, with $w$ harmonic in $B_1^+(w)$. 
If $$0 \in F(w), \quad \mathcal B_{1/2}(1/2 e_n) \subset  \{w(x,0)>0\},$$
then 
$$w= \alpha U_0 + o(|X|^{1/2}), \quad \mbox{for some $\alpha>0$}.$$ 
The same conclusion holds for some $\alpha \ge 0$ if $$\mathcal B_{1/2}(-1/2 e_n) \subset \{w=0\}.$$
\end{lem}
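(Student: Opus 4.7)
The plan is to argue via blow-up and classification of the resulting limit. Consider the rescalings $w_r(X) := r^{-1/2}\, w(rX)$. Since $w \in C^{1/2}$ with $w(0) = 0$, the family $\{w_r\}$ is uniformly bounded and equicontinuous on compact sets, so along some sequence $r_k \to 0$ we extract a locally uniform limit $w_0$. Each $w_r$ is non-negative and harmonic off its zero set on $\{x_{n+1}=0\}$, and the hypothesis $\mathcal B_{1/2}(e_n/2) \subset \{w(x,0) > 0\}$ scales to force the zero set of the blow-up into $\{x_n \le 0,\, x_{n+1}=0\}$. Hence $w_0$ is a non-negative, even-in-$x_{n+1}$, $C^{1/2}$ function, harmonic on $\R^{n+1} \setminus \Sigma$ for some $\Sigma \subset \{x_n \le 0,\, x_{n+1}=0\}$.

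The next step is to identify $w_0 = \alpha U_0$. I would invoke a Weiss-type monotonicity formula (a version of the one developed later in Section 4) to prove $w_0$ is homogeneous of degree $1/2$: the monotone quantity is constant on any blow-up limit, and that constancy is exactly the Euler identity. Once homogeneous, $w_0|_{S^n}$ is a non-negative Dirichlet eigenfunction of the spherical Laplacian on $S^n \setminus \Sigma$ with eigenvalue $\tfrac{1}{2}(n-\tfrac{1}{2})$. The reference function $U_0|_{S^n}$ realizes this eigenvalue as the principal eigenfunction on the complement of the \emph{full} half-hyperplane $\{x_n\le 0,\, x_{n+1}=0\}$ and is strictly positive there; since $w_0 \ge 0$ and $\Sigma$ sits inside that half-hyperplane, a comparison via the ratio $w_0/U_0$ and the strong maximum principle forces $w_0 = \alpha U_0$ for some $\alpha \ge 0$.

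To promote $\alpha \ge 0$ to $\alpha > 0$ in the first case, I use non-degeneracy: the hypothesis $\mathcal B_{1/2}(e_n/2) \subset \{w>0\}$ together with the interior Harnack inequality yields $w(t e_n, 0) \ge c\sqrt{t}$ for small $t > 0$, so $w_0(e_n) > 0$ and hence $\alpha > 0$. Uniqueness of $\alpha$ across subsequences follows again from monotonicity, which pins down a single limit value as $r \to 0$; thus $w_r \to \alpha U_0$ without selecting subsequences, which is exactly $w(X) = \alpha U_0(X) + o(|X|^{1/2})$. The second case $\mathcal B_{1/2}(-e_n/2) \subset \{w=0\}$ is parallel and in fact simpler: the blow-up zero set automatically fills the entire half-hyperplane, the classification is immediate, and one must allow $\alpha = 0$ since nothing forces non-degeneracy.

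The main obstacle is the classification step. A priori $w_0$ could carry non-trivial angular dependence in the transverse directions $x' = (x_1, \dots, x_{n-1})$, producing other $1/2$-homogeneous harmonic functions vanishing on $\Sigma$. The crucial ingredients that rule these out are the specific value $\gamma = 1/2$ of the homogeneity, the even symmetry in $x_{n+1}$, and the fact that $\Sigma$ sits inside a codimension-one half-hyperplane; together these single out $U_0|_{S^n}$ as the unique (up to scalar) admissible non-negative profile. Extending this rigidity to the weak situation where $\Sigma$ is strictly smaller than the full half-hyperplane, so that the Dirichlet condition holds only one-sidedly, is the technical heart of the argument.
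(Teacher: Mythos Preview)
The paper does not prove this lemma; it quotes it from \cite{DS} (Lemma~7.5 there). The argument in \cite{DS} is not a blow-up/monotonicity argument but a direct application of the boundary Harnack principle in slit domains: one compares $w$ with the explicit model $U_0$, shows that the quotient $w/U_0$ extends H\"older-continuously across the slit, and reads off the expansion $w=\alpha U_0+o(|X|^{1/2})$ from the value of this quotient at the origin. Positivity of $\alpha$ in the first case is a Hopf-type barrier estimate, again by comparison with a multiple of $U_0$.

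Your approach has a genuine gap at the monotonicity step. The Weiss formula of Section~4 is proved only for minimizers (Theorem~\ref{MF}, where the inequality comes from testing against the $1/2$-homogeneous replacement) and for viscosity solutions with Lipschitz free boundary satisfying the precise condition $\partial u/\partial U_0=\sqrt{2/\pi}$ (Theorem~\ref{MFvisc}, where it comes from a Pohozaev identity that uses the free boundary condition on $\Gamma$). The function $w$ in Lemma~\ref{oneside} is assumed neither to minimize $E$ nor to satisfy any free boundary condition, so no Weiss-type quantity is available to force homogeneity of the blow-up or, more importantly, uniqueness of the blow-up limit; without uniqueness you cannot upgrade subsequential convergence to the full expansion $w=\alpha U_0+o(|X|^{1/2})$. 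A secondary issue: the lower bound $w(te_n,0)\ge c\sqrt{t}$ does not follow from interior Harnack alone, since $te_n\to 0\in F(w)$ leaves every compact subset of $\{w>0\}$; this is exactly a boundary Harnack/Hopf statement and needs the comparison with $U_0$ that the direct approach provides anyway.
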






We now recall the notion of viscosity solutions to \eqref{FB}, introduced in \cite{DR}. 

\begin{defn}Given $g, v$ continuous, we say that $v$
touches $g$ by below (resp. above) at $X_0$ if $g(X_0)=
v(X_0),$ and
$$g(X) \geq v(X) \quad (\text{resp. $g(X) \leq
v(X)$}) \quad \text{in a neighborhood $O$ of $X_0$.}$$ If
this inequality is strict in $O \setminus \{X_0\}$, we say that
$v$ touches $g$ strictly by below (resp. above).
\end{defn}


\begin{defn}\label{defsub} We say that $v \in C(\Omega)$ is a (strict) comparison subsolution to \eqref{FB} if $v$ is a  non-negative function in $\Omega$ which is even with respect to $x_{n+1}$ and it satisfies
\begin{enumerate} \item $v$ is $C^2$ and $\Delta v \geq 0$ \quad in $\Omega^+(v)$;\\
\item $F(v)$ is $C^2$ and if $x_0 \in F(v)$ we have
$$v (x_0+t\nu(x_0),0) = \alpha(x_0) \sqrt t + o(\sqrt t), \quad \textrm{as $t \rightarrow 0^+,$}$$ with $$\alpha(x_0) > 1,$$ where $\nu(x_0)$ is the unit normal at $x_0$ to $F(v)$ pointing toward $\{v(x,0)>0\}$.\\
\end{enumerate} 
\end{defn}

Similarly one can define a (strict) comparison supersolution. 

\begin{defn}\label{defvisc}We say that $u$ is a viscosity solution to \eqref{FB} if $u$ is a  continuous non-negative function in $\Omega$ which is even with respect to $x_{n+1}$ and it satisfies
\begin{enumerate} \item $\Delta u = 0$ \quad in $\Omega^+(u)$;\\ \item Any (strict) comparison subsolution (resp. supersolution) cannot touch $u$ by below (resp. by above) at a point $X_0 = (x_0,0)\in F(u). $\end{enumerate}\end{defn}

In \cite{DS} we proved optimal regularity for viscosity solutions. Precisely, we have the following lemma.

\begin{lem}[$C^{1/2}$-Optimal regularity]\label{optimal}Assume $u$ solves \eqref{FB} in $B_2$ and $0 \in F(u)$. Then $$u(X) \le C dist(X, F(u))^{1/2} \quad \mbox{$X \in \mathcal B_{1}$.}$$
Moreover, 
$$\|u\|_{C^{1/2}(B_{1})} \le C (1+u(e_{n+1})).$$ 
\end{lem}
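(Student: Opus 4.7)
The lemma naturally decomposes into the pointwise growth bound $u(X) \leq C\,\mathrm{dist}(X, F(u))^{1/2}$ on $\mathcal B_1$ and the Hölder-norm estimate, which follows from the growth bound via interior elliptic estimates. I would prove the growth bound by a barrier-contradiction argument based on $U_0$, and then deduce the $C^{1/2}$ bound by the standard dichotomy between points far from and close to the free boundary.

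For the growth bound, suppose to the contrary that there exist $M$ arbitrarily large and $X_0 = (x_0, 0) \in \mathcal B_1$ with $u(X_0) \geq M\sqrt{d}$, where $d := \mathrm{dist}(x_0, F(u))$. After the natural rescaling $\tilde u(X) := d^{-1/2} u(X_0 + dX)$ I reduce to $d = 1$, with a closest free boundary point $Z_0$ at unit distance from $X_0$, so that the disk $\mathcal B_1(X_0)$ sits inside $\{\tilde u(\cdot,0) > 0\}$ and is tangent to $F(\tilde u)$ at $Z_0$ from inside. I then build a family $v_{\alpha,R,t}$ of \emph{strict} comparison subsolutions (Definition~2.3), obtained from $U_0$ by bending its flat free boundary into a sphere of large radius $R$ sitting inside $\{\tilde u(\cdot,0) > 0\}$ and tangent to $F(\tilde u)$ at an interior point parametrized by $t$, carrying leading coefficient $\alpha$ at $F(v_{\alpha,R,t})$. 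The convexity of the bent free boundary toward the positive phase forces $v_{\alpha,R,t}$ to be strictly subharmonic in its positivity set, while the leading coefficient in the expansion at $F(v_{\alpha,R,t})$ is $\alpha$. A Harnack chain in $\Omega^+(\tilde u)$ propagates the lower bound $\tilde u(X_0) \geq M$ to a definite region near $Z_0$, so that for $M$ large enough I can place $v_{\alpha,R,t}$ with some $\alpha > 1$ strictly below $\tilde u$ and then slide $t$ until first contact. Strict subharmonicity rules out a contact in $\Omega^+(\tilde u)$, so it must occur at a point of $F(\tilde u)$, contradicting the viscosity solution property of $\tilde u$ since $\alpha > 1$. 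Hence $M$ is universally bounded.

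For the Hölder bound, I first use Harnack for harmonic functions (together with the even symmetry of $u$ across $\{x_{n+1} = 0\}$) to control $\sup_{B_{3/4}} u$ by $C(1 + u(e_{n+1}))$. Then for $X, Y \in B_{1/2}$ I split into the two cases $|X - Y| \geq \tfrac12 d(X)$ and $|X - Y| < \tfrac12 d(X)$, where $d(X) := \mathrm{dist}(X, F(u))$: in the first case the growth bound controls $u(X)$ and $u(Y)$ individually by $C|X - Y|^{1/2}$; in the second case $u$ is harmonic on a ball of radius $\sim d(X)$ containing both points (using even symmetry across $\{x_{n+1} = 0\}$ whenever $X$ sits in the zero region of the thin set), and the interior gradient estimate $|\nabla u| \leq C d(X)^{-1/2}$ combined with $|X - Y| \leq d(X)$ yields $|u(X) - u(Y)| \leq C|X - Y|^{1/2}$.

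The main obstacle is the barrier construction itself: producing the strict subsolution $v_{\alpha,R,t}$ with curved free boundary and rigorously verifying both $\Delta v > 0$ in its positivity set and the expansion $v = \alpha\sqrt t + o(\sqrt t)$ with $\alpha > 1$ at $F(v)$, uniformly near the singular slit of $U_0$ where all derivatives of $u$ blow up. Once this barrier is in place the sliding/contradiction step is routine.
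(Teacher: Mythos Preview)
This lemma is not proved in the present paper; it is quoted from \cite{DS}. The closest analogue proved here is Corollary~3.4 for \emph{minimizers}, where the growth bound comes from an energy comparison with the harmonic replacement and a De~Giorgi iteration (Lemma~3.2), and the $C^{1/2}$ norm is then obtained by the decomposition $u=v+w$ in $B_{3/2}^+$ with $v$ vanishing on the thin set (controlled by $u(e_{n+1})$) and $w$ taking the thin boundary data (controlled by the growth bound). For viscosity solutions energy comparison is unavailable, so your barrier idea for the growth bound is indeed the natural route.

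There is, however, a genuine gap in your barrier construction. For $v_{\alpha,R,t}(X)=\alpha\,U_0(d(x),x_{n+1})$ with $d$ the signed distance to a sphere, you get strict subharmonicity only when the \emph{zero} phase of $v$ is the convex ball (so $\Delta d>0$), i.e.\ the positive phase of $v$ lies \emph{outside} the sphere. But for $v\le\tilde u$ you also need $\{v=0\}\supset\{\tilde u=0\}$ on the thin set near $Z_0$; since you only know $\mathcal B_1(X_0)\subset\{\tilde u>0\}$, the zero set of $\tilde u$ can protrude into the gap between $\partial\mathcal B_1(X_0)$ and your sphere, where $v>0$. In short, the geometry ``sphere inside $\{\tilde u>0\}$'' that your sliding needs is incompatible with the convexity that gives subharmonicity. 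The clean fix is to abandon the bent-$U_0$ barrier and instead take $w'$ harmonic in $\bigl(B_2(X_0)\setminus\overline{B_{1/4}(X_0)}\bigr)\setminus\bigl(\{x_{n+1}=0\}\setminus\mathcal B_1(X_0)\bigr)$ with $w'=1$ on $\partial B_{1/4}(X_0)$ and $w'=0$ elsewhere on the boundary. Then $cM\,w'\le\tilde u$ by the maximum principle (here $\{w'=0\}$ on the thin set is exactly $\mathcal B_1(X_0)^c\supset\{\tilde u=0\}$), $F(w')=\partial\mathcal B_1(X_0)\ni Z_0$, and the expansion coefficient of $cM\,w'$ at $Z_0$ is $cM\beta'>1$ for $M$ large, with $\beta'>0$ universal. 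This $cM\,w'$ is a strict comparison subsolution (Definition~2.3 only asks $\Delta v\ge 0$ and coefficient $>1$) touching $\tilde u$ from below at $Z_0\in F(\tilde u)$, contradicting Definition~2.4. No sliding is needed.

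For the second part, your dichotomy works cleanly on $\mathcal B_1$ (where the growth bound plus interior estimates in the full ball $B_{d(x)}(x)\subset\Omega^+(u)$ give $|\nabla_x u(x,0)|\le C\,d(x)^{-1/2}$), but the step ``Harnack $\Rightarrow \sup_{B_{3/4}}u\le C(1+u(e_{n+1}))$'' is not immediate: $u$ is harmonic only in $\Omega^+(u)$, and a Harnack chain from $e_{n+1}$ cannot reach points near the thin zero set in a controlled number of steps. The decomposition $u=v+w$ in $B_{3/2}^+$ used in Corollary~3.4 handles this directly and is the cleaner route to the full $C^{1/2}$ bound in $B_1$.
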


The main result in \cite{DS} (see Theorem 1.1 there) is the following flatness theorem, which improves the previous $C^{1,\alpha}$ result obtained in \cite{DR}. 

\begin{thm} \label{mainT}There exists $\bar \eps >0$ small depending only on $n$, such that if $u$ is a viscosity solution to \eqref{FB}  in $B_1$ satisfying
\begin{equation*} \{x  \in \mathcal{B}_1: x_n \leq - \bar \eps\} \subset \{x \in \mathcal{B}_1:  u(x,0)=0\} \subset \{x \in \mathcal{B}_1  : x_n \leq \bar \eps \},\end{equation*} then $F(u) \cap \mathcal B_{1/2}$ is a $C^{2,\alpha}$ graph for every $\alpha \in (0,1)$ with  $C^{2,\alpha}$ norm bounded by a constant depending on $\alpha$ and $n$. \end{thm}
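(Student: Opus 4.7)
The plan is to prove Theorem \ref{mainT} by an iterative improvement-of-flatness scheme, in the spirit of the approach pioneered by De Silva for the classical one-phase problem. Concretely, the aim is a statement of the form: \emph{if} $F(u)$ is trapped in an $\eps$-strip in $B_1$ relative to the flat model $U_0(x\cdot e_n, x_{n+1})$, \emph{then} in $B_\rho$ (with $\rho$ universal) it is trapped in a $C\rho^{2+\alpha}\eps$-strip relative to an appropriately tilted model $U_0((x-x_0)\cdot\nu + Q(x'), x_{n+1})$, where $Q$ is a quadratic polynomial. Iterating on dyadic scales produces a pointwise quadratic expansion of $F(u)$ at every free boundary point with $C^{2,\alpha}$-controlled coefficients.

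The first ingredient is a \emph{partial Harnack inequality} for flat solutions. Given $\eps$-flatness, one builds explicit comparison sub- and supersolutions by perturbing $U_0$ along a smooth bump, i.e.\ functions of the form $U_0(x_n + \eps\phi(X), x_{n+1})$, verifies that they are strict comparison sub/supersolutions in the sense of Definition \ref{defsub}, and applies the viscosity principle of Definition \ref{defvisc} to conclude that the flatness strip shrinks by a definite universal factor on one side inside a smaller ball. The $C^{1/2}$-optimal regularity (Lemma \ref{optimal}) and the one-sided expansion at regular points (Lemma \ref{oneside}) are used to make the comparison arguments go through up to the edge.

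The second ingredient is a \emph{compactness/linearization} argument, run by contradiction. Assume that the desired improvement of flatness fails along a sequence $u_k$ with flatness $\eps_k \to 0$. The renormalized quantities $\tilde u_k := (u_k - U_0)/\eps_k$, divided by a suitable weight such as $|\nabla U_0|$, are equi-Holder by the partial Harnack and extract a limit $\tilde u$. One then identifies the limit problem: $\tilde u$ is harmonic on the slit half-space $\R^{n+1}\setminus\{x_n\le 0,\, x_{n+1}=0\}$, satisfies $\p_{x_{n+1}}\tilde u = 0$ on the regular part $\{x_n>0,\, x_{n+1}=0\}$, and satisfies a linearized free boundary condition along the codimension-two edge $\{x_n = x_{n+1}=0\}$ obtained by passing to the limit in $\p u/\p U_0 = 1$. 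Under the conformal change of variables $(x_n,x_{n+1})\mapsto (U_0,V_0)$ with $U_0 + iV_0 = \sqrt{x_n + ix_{n+1}}$, the slit is unfolded to a half-space and the limiting problem reduces to a standard Neumann problem for harmonic functions, whose solutions are $C^\infty$ and in particular admit polynomial approximations of every order. This is exactly what yields the quadratic improvement and produces the contradiction.

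The main obstacle lies in the linearization step: one must rigorously derive the correct linearized condition at the degenerate edge and justify the convergence $\tilde u_k \to \tilde u$ across that edge, where all derivatives of the original $u_k$ blow up. Lemma \ref{optimal} and Lemma \ref{oneside} provide the uniform control of $u_k/U_0$ needed to pass to the limit, but matching the fractional Holder decay across the edge to the smooth Neumann problem after conformal inversion requires some care. Once the limiting regularity is in hand, the strength of the conclusion---$C^{2,\alpha}$ for \emph{every} $\alpha<1$---follows because the linearized problem admits $C^\infty$ polynomial approximants; the dyadic iteration is then a routine scaling exercise.
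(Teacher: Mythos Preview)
The paper does not prove Theorem \ref{mainT}; it is quoted from \cite{DS} (see the sentence preceding the statement: ``The main result in \cite{DS} (see Theorem 1.1 there) is the following flatness theorem\ldots''). So there is no ``paper's own proof'' to compare against.

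That said, your outline is essentially the strategy carried out in \cite{DR} (for $C^{1,\alpha}$) and \cite{DS} (for $C^{2,\alpha}$): partial Harnack via barriers, compactness/linearization by contradiction, identification of the limit problem on the slit domain, and iteration. One point of contact with the present paper worth noting: the quadratic approximants actually used in \cite{DS} are the explicit functions $V_{M,a,b}\in\mathcal V_\Lambda^0$ described just before Theorem \ref{V_approx}, and the $C^{2,\alpha}$ statement is precisely that $u$ is trapped between $V(\cdot \pm \Lambda r^{2+\alpha}e_n)$ at every scale. Your description of the linearized problem as a Neumann problem after the conformal unfolding $(x_n,x_{n+1})\mapsto(U_0,V_0)$ is a correct heuristic, but in the execution the edge condition is more delicate than a plain Neumann condition---the correct linearized operator carries a weight coming from the Jacobian of $\sqrt z$, and the admissible quadratic profiles must satisfy the constraint $a+b-\operatorname{tr}M=0$ (this is exactly the defining relation of $\mathcal V_\Lambda^0$). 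That constraint is what encodes the linearized free boundary condition, and it does not drop out of a naive Neumann reduction; getting it right is the main technical content of \cite{DS}.
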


We recall now the definition of a special family of functions $V_{\mathcal S, a, b}$  introduced in \cite{DS} which approximate  solutions quadratically.  

For any $a,b \in \R$ we define the following family of (two-dimensional) functions (given in polar coordinates $(\rho, \beta)$)
\begin{equation}
v_{a,b}(t,s):= (1+\frac{a}{4}\rho+\frac{b}{2}t)\rho^{1/2}\cos \frac{\beta}{2},
\end{equation}
that is 
 $$v_{a,b}(t,s) = (1+\frac{a}{4}\rho+\frac{b}{2}t)U_0(t,s) = U_0(t,s) + o(\rho^{1/2}),$$ with $U_0$ defined in \eqref{U}.

Given a surface $\mathcal S= \{x_n = h(x')\} \subset \R^n$,  we call $\mathcal P_{\mathcal S,X}$ the 2D plane passing through $X=(x,x_{n+1})$ and perpendicular to $\mathcal S$,
that is the plane containing $X$ and generated by the $x_{n+1}$-direction and the normal direction from $(x,0)$ to $\mathcal S$.

We define the family of functions 
\begin{equation}\label{vS}
V_{\mathcal S, a,b} (X): = v_{a,b}(t,x_{n+1}), \quad X=(x,x_{n+1}),
\end{equation}
with $t=\rho\cos \beta, x_{n+1}=\rho\sin \beta$ respectively the first and second coordinate of $X$ in the plane $\mathcal P_{\mathcal S,X}$. In other words, $t$ is the signed distance from $x$ to $\mathcal S$ (positive above $\mathcal S$ in the $x_n$-direction.)

If $$\mathcal S:= \{ x_n = \frac 1 2 (x')^T M x' \},$$ for some $M \in S^{(n-1) \times (n-1)}$  we use the notation $$
 V_{M,a,b}(X):= V_{\mathcal S, a,b} (X). $$

 We define the following class of functions$$\mathcal V_\Lambda^0:= \{V_{M, a,b}  : \  a+b- tr M=0,  \ \ \|M\|, |a|, |b| \leq \Lambda\}.$$ Notice that if we rescale $V = V_{M, a. b}$ that is $$V_\lambda(X)=\lambda^{-1/2}V(\lambda X), $$ then it easily follows  from our definition that $$V_\lambda= V_{\lambda M, \lambda a, \lambda b}.$$

It can also be checked from the definition (see also Proposition 3.3. in \cite{DS}) that  if $V \in \mathcal V_\Lambda^0$ then \be\label{laplaceV}|\Delta V(X)| \leq C \Lambda^2, \quad \text{in $B_{1/2}(e_n)$}.\ee

In the course of the proof of our flatness Theorem \ref{mainT} we also obtained that a solution $u$ can be approximated in a $C^{2,\alpha}$ fashion near $0\in F(u)$ by functions $V \in \mathcal V_\Lambda^0$. The precise statement can be formulated as follows (Theorem 5.2 in \cite{DS}).

\begin{thm}\label{V_approx} Assume $0\in F(u)$ and $F(u)$ is a $C^1$ surface in a neighborhood of $0$ with normal $e_n$ pointing towards the positive side. Then, for any $\alpha \in(0,1)$
\begin{equation*}\label{trap1}V(X-\Lambda r^{2+\alpha} e_n) \le u(X) \le V( X+\Lambda r^{2+\alpha}e_n) \quad \mbox{in $B_r$, \, for all $r$ small,} \end{equation*} for some $V=V_{M,a,b} \in \mathcal V^0_\Lambda$, with $\Lambda$ depending on $u, n$ and $\alpha$. 
\end{thm}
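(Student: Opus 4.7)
The strategy is to combine Theorem \ref{mainT} (which gives $C^{2,\alpha}$ regularity of $F(u)$ at $0$) with a Campanato-type iteration at dyadic scales, using the family $\mathcal V^0_\Lambda$ as the ``affine'' replacements at the free boundary. Since $F(u)$ is $C^1$ at $0$ with normal $e_n$, the rescalings $u_r(X) := r^{-1/2} u(rX)$ satisfy, for all $r$ below some $r_0$, the flatness hypothesis
$$\{x_n \le -\bar\eps\} \cap \mathcal B_1 \subset \{u_r(\cdot,0)=0\} \subset \{x_n \le \bar\eps\} \cap \mathcal B_1.$$
Applying Theorem \ref{mainT} to $u_r$ yields that $F(u)$ is a $C^{2,\alpha}$ graph $x_n = h(x')$ in a neighborhood of $0$, with $h(0)=0$ and $\nabla h(0)=0$. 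Set $M := D^2 h(0) \in S^{(n-1)\times(n-1)}$, so $h(x') = \tfrac{1}{2}(x')^T M x' + O(|x'|^{2+\alpha})$, and let $\mathcal S = \{x_n = \tfrac{1}{2}(x')^T M x'\}$.

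\textbf{Identifying $(a,b)$.} The coefficient $b$ arises as the first-order tangential correction to the free boundary coefficient $\alpha(x_0)$ from Lemma \ref{oneside}: since $F(u)$ is $C^{2,\alpha}$ and the boundary condition requires $\alpha \equiv 1$ along $F(u)$, one can expand $\alpha(x_0) = 1 + \tfrac{b}{2}\,x_0\cdot(-e_n) + O(|x_0|^{1+\alpha})$ after reducing to the model surface $\mathcal S$. The radial coefficient $a$ captures the next order in $\rho$ of the harmonic expansion of $u$ in the planes $\mathcal P_{\mathcal S,X}$. The compatibility relation $a+b-\operatorname{tr} M = 0$ is then forced by requiring the resulting $V_{M,a,b}$ to be harmonic modulo $O(\rho^{1/2})$ in $\Omega^+(V)$ (this is exactly the algebraic content of \eqref{laplaceV} together with Proposition~3.3 in \cite{DS}: the ``wrong'' trace would produce a term growing like $\rho^{-1/2}$).

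\textbf{Quantitative approximation by iteration.} Working at the dyadic scales $r_k = 2^{-k} r_0$, one applies the inductive improvement-of-flatness step that underlies Theorem \ref{mainT} to each $u_{r_k}$: this produces a $V_k \in \mathcal V^0_\Lambda$ with
$$V_k(X - \eps_k e_n) \le u_{r_k}(X) \le V_k(X + \eps_k e_n) \quad \text{in } B_1,$$
together with the decay $\eps_{k+1} \le 2^{-(2+\alpha)}\eps_k$, obtained by linearizing the free boundary problem around $U_0$ and solving a Neumann-type problem with the decay rate $2+\alpha$ (the two-derivative linear part lies in $\mathcal V^0$, so it is absorbed in the next approximant). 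Rescaling back and using the translation-covariance $V_{\lambda M,\lambda a,\lambda b}$ noted before \eqref{laplaceV}, the triples $(M_k, a_k, b_k)$ form a Cauchy sequence with geometric rate and limit $(M,a,b)$ satisfying $a+b-\operatorname{tr} M=0$ and $\|M\|, |a|, |b| \le \Lambda$. For this limit $V := V_{M,a,b}$, the telescoping of the inequalities at scales $r_k$ yields precisely the sandwich $V(X-\Lambda r^{2+\alpha}e_n)\le u(X)\le V(X+\Lambda r^{2+\alpha}e_n)$ in $B_r$.

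\textbf{Main obstacle.} The delicate point is the iteration step itself: one needs an improvement of flatness in which the comparison class is not merely translates of $U_0$ but the full three-parameter family $\mathcal V^0_\Lambda$, together with a decay rate strictly better than linear. This relies on an interior $C^{2,\alpha}$ Schauder estimate for the linearized problem at $U_0$ (harmonic in the slit domain with a Neumann-type free boundary condition read off from \eqref{deffb}), showing that the linear correction can be represented by some element of $\mathcal V^0$ and giving the gain $2^{-(2+\alpha)}$. Once that linear theory is in hand, the nonlinear iteration closes in standard Campanato fashion and yields the stated approximation.
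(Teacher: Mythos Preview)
This theorem is not proved in the present paper: it is quoted verbatim from \cite{DS} (as ``Theorem 5.2 in \cite{DS}''), and the paper only records the statement in order to deduce Lemma \ref{utau} from it. So there is no proof here to compare your attempt against directly.

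That said, your outline is an accurate description of how the result is obtained in \cite{DS}: the content of Theorem \ref{V_approx} is exactly the Campanato-type improvement-of-flatness iteration that \emph{constitutes} the proof of Theorem \ref{mainT}, with the approximants drawn from the three-parameter family $\mathcal V^0_\Lambda$ rather than from translates of $U_0$. You correctly identify the main obstacle as the $C^{2,\alpha}$ Schauder estimate for the linearized problem in the slit domain, which is what allows the decay rate $2+\alpha$ at each step and forces the constraint $a+b-\operatorname{tr} M=0$ on the limiting triple.

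One structural remark: your ``Plan'' and ``Identifying $(a,b)$'' paragraphs invoke Theorem \ref{mainT} as a black box to first obtain $F(u)\in C^{2,\alpha}$ and read off $M=D^2h(0)$, and only afterwards run the iteration. This is logically permissible in the present paper (since both results are imported), but it is redundant and slightly misleading about the actual architecture in \cite{DS}: there, Theorem \ref{mainT} and Theorem \ref{V_approx} are proved \emph{simultaneously} by the same iteration, and the triple $(M,a,b)$ emerges as the limit of the sequence $(M_k,a_k,b_k)$ rather than being identified in advance. Your ``Quantitative approximation by iteration'' paragraph already contains this correct version of the argument, so the earlier a priori identification of $M$ can simply be dropped.
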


As a consequence of the theorem above we obtain the following Lemma \ref{utau}, which together with the Monotonicity formula (Theorem \ref{MFvisc}) are the main ingredients to prove Theorem \ref{thm2} (see Proposition \ref{trivialcones}). This is the lemma where the $C^{2,\alpha}$ regularity of flat free boundaries is needed. For all the other arguments in this paper the $C^{1,\alpha}$ regularity is sufficient.

\begin{lem} \label{utau}Assume $F(u)$ is $C^1$ in a neighborhood of $X_0=(x_0,0) \in F(u)$ and let $\nu \in \R^n \times \{0\}$ denote the unit normal vector at $x_0$ pointing towards $\{u>0\}.$ Then, for all $\alpha \in (0,1)$, for all $r$ small, and for $K$ depending on $u,\alpha, n$ $$|\p_\tau u(X_0 + r\nu)| \leq K r^{\frac 1 2 + \alpha}$$if $\tau \in \R^n \times \{0\}$ is a tangent unit vector to $F(u)$ at $X_0$, that is $\tau \cdot \nu =0.$
\end{lem}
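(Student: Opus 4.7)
The plan is to combine the quadratic $V$-approximation from Theorem \ref{V_approx} with standard interior estimates for the Poisson equation, exploiting the key fact that $V_{M,a,b}$ has vanishing first-order derivative in directions tangent to the surface $\mathcal{S}$.

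After a translation I assume $X_0=0$ and $\nu=e_n$, and fix $\alpha \in (0,1)$. By Theorem \ref{V_approx} there is $V=V_{M,a,b} \in \mathcal V^0_\Lambda$, with $\Lambda$ depending on $u$, $n$, $\alpha$, such that
\[
V(X-\Lambda r^{2+\alpha}e_n) \le u(X) \le V(X+\Lambda r^{2+\alpha}e_n) \quad \text{in } B_r
\]
for every small $r$. Since $F(u)$ is $C^1$ at $0$ with normal $e_n$, the ball $B_{r/4}(re_n)$ lies inside $\Omega^+(u)$ once $r$ is small, and there $u$ is harmonic. Combining the trapping with the pointwise bound $|\p_n V(Y)| \le C|Y|^{-1/2}$ at points at distance $\sim r$ from the origin yields $|u-V| \le C\Lambda r^{3/2+\alpha}$ on $B_{r/4}(re_n)$, while the scaling identity $V_\lambda=V_{\lambda M,\lambda a,\lambda b}$ applied to \eqref{laplaceV} gives $|\Delta V| \le C\Lambda^2 r^{1/2}$ on the same ball.

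The function $w=u-V$ then satisfies $\Delta w=-\Delta V$ in $B_{r/4}(re_n)$, and the standard interior gradient estimate for the Poisson equation gives
\[
|\nabla w(re_n)| \le \frac{C}{r}\|w\|_{L^\infty(B_{r/4}(re_n))} + Cr\|\Delta V\|_{L^\infty(B_{r/4}(re_n))} \le Cr^{1/2+\alpha} + C\Lambda^2 r^{3/2} \le K r^{1/2+\alpha},
\]
where $\alpha<1$ is used to absorb the lower-order term. Next I check that $\p_\tau V(re_n)=0$. Since $\mathcal S=\{x_n=\frac 12(x')^TMx'\}$ has tangent plane $\{x_n=0\}$ at the origin, any $\tau\in \R^n\times\{0\}$ with $\tau\cdot e_n=0$ is tangent to $\mathcal S$ at $0$, and a direct computation shows that the signed distance $t$ from $re_n+h\tau$ to $\mathcal S$ satisfies $t=r-\frac{h^2}{2}\tau'^T M\tau'+O(h^4)$, so $\p_h t|_{h=0}=0$. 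Since $V(X)=v_{a,b}(t(X),x_{n+1})$, the chain rule forces $\p_\tau V(re_n)=0$, and combining with the gradient estimate yields $|\p_\tau u(re_n)|=|\p_\tau w(re_n)|\le Kr^{1/2+\alpha}$.

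The main technical point is the coordination of scales: the $r^{2+\alpha}$ translation error in the $V$-approximation, multiplied by the $r^{-1/2}$ singularity of $\p_n V$, gives exactly the $r^{3/2+\alpha}$ closeness of $u$ to $V$ needed for the interior estimate to yield the sought-after $r^{1/2+\alpha}$ bound. The vanishing of $\p_\tau V$ at first order is what eliminates what would otherwise be a dominant $r^{-1/2}$ contribution in the tangential direction.
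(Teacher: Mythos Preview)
Your argument is correct and follows the paper's own proof essentially step for step: both use Theorem~\ref{V_approx} to trap $u$ between two translates of $V$, convert this into an $L^\infty$ bound on $u-V$, combine with the Laplacian estimate \eqref{laplaceV} for $V$ to run an interior gradient estimate, and finish by observing that $\p_\tau V$ vanishes at the evaluation point. The only cosmetic difference is that the paper first rescales by $r$ (working with $u_r,V_r$ on $B_{1/2}(e_n)$) whereas you stay at the original scale on $B_{r/4}(re_n)$; the two are equivalent, and your explicit signed-distance computation is just a spelled-out version of the paper's remark that $\nabla V_r(e_n)\in\mathrm{span}\{e_n,e_{n+1}\}$.
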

\begin{proof}Assume for simplicity that $X_0=0$, $\nu=e_n.$ Then, by Theorem \ref{V_approx}, we may assume that 
$$V(X - \Lambda r^{2+\alpha}e_n) \leq u(X) \leq V(X+\Lambda r^{2+\alpha} e_n)$$ with $V= V_{M,a,b} \in \mathcal{V}_\Lambda^0.$ The rescalings $$u_r(X) = r^{-1/2} u(rX), \quad V_r(X) = r^{-1/2} V(rX) = V_{rM,ra,rb}(X) \in \mathcal{V}_{\Lambda r}^0$$ satisfy
$$V_r(X - \Lambda r^{1+\alpha} e_n) \leq u_r(X) \leq V_r(X+ \Lambda r^{1+\alpha} e_n).$$ In $B_{1/2}(e_n)$ we have,
$$|u_r - V_r| \leq \Lambda r^{1+\alpha} \p_n (V_r) \leq C(\Lambda) r^{1+\alpha}$$ and (see \eqref{laplaceV})
$$|\Delta(u_r - V_r)| \leq |\Delta V_r| \leq C(\Lambda) r^2.$$ Thus,
$$|\nabla u_r(e_n) - \nabla V_r(e_n)| \leq C(\Lambda) r^{1+\alpha}.$$ Since, $\nabla V_r(e_n) \in span\{e_n,e_{n+1}\}$ and $\tau \cdot \nabla V_r(e_n) = 0$ if $\tau \in \R^n \times \{0\}, \tau \perp e_n,$  we obtain from the previous inequality that $$|\tau \cdot \nabla u_r(e_n)| \leq K r^{1+\alpha}$$ that is $$|\tau \cdot \nabla u(re_n)| \leq K r^{1/2+\alpha}.$$
\end{proof}

The next remark will be used in the proof of the Monotonicity Formula for viscosity solutions.

\begin{rem}\label{rem0} Using the $C^{1,\alpha}$ estimates in \cite{DR}, we can
approximate $u$ by $U_0$ (instead of $V$) in a $C^{1,\alpha}$ fashion and write
  in the proof above that $$U_0(X - \Lambda' r^{1+\alpha}e_n) \leq u(X) \leq U_0(X+ \Lambda' r^{1+\alpha} e_n).$$ This leads to the conclusion that
$$|\nabla u(X) - \nabla U_0(X) | \leq K' |X|^{\alpha-1/2}, $$
for all $X$ in the two-dimensional plane generated by $e_n$ and $e_{n+1}.$ 
\end{rem}

We conclude this section by recalling the following compactness result (Proposition 7.8 in \cite{DS}.)

\begin{prop}[Compactness]\label{compactness}
Assume $u_k$ solve \eqref{FB} and converge uniformly to $u_*$ in $B_1$, and $\{u_k =0\}$ converges in the Hausdorff distance to $\{u_*=0\}$. Then $u_*$ solves \eqref{FB} as well.
\end{prop}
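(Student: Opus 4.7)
The plan is to verify each of the two conditions in Definition \ref{defvisc} for $u_*$. Continuity, non-negativity, and evenness in $x_{n+1}$ are inherited immediately from the uniform convergence $u_k \to u_*$. It therefore remains to check (i) that $\Delta u_* = 0$ in $\Omega^+(u_*)$ and (ii) that no strict comparison sub- or supersolution can touch $u_*$ at a point of $F(u_*)$.

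For (i), I would split on whether the test point $X = (x,x_{n+1})$ lies on the hyperplane $\{x_{n+1}=0\}$. If $x_{n+1} \neq 0$, then $X \in \Omega^+(u_k)$ for every $k$ and $u_k$ is harmonic in a small ball around $X$ disjoint from the hyperplane; harmonicity then passes to the uniform limit. If instead $X = (x,0)$ and $u_*(x,0) > 0$, then $(x,0)$ lies in the open (in $\R^n$) positivity set of $u_*$, and the Hausdorff convergence $\{u_k=0\} \to \{u_*=0\}$ forces a whole disk $\mathcal{B}_\rho(x)$ to avoid $\{u_k = 0\}$ for all large $k$. Consequently $u_k$ is harmonic in a full $(n+1)$-dimensional neighborhood of $X$, and $\Delta u_* = 0$ there by uniform convergence.

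For (ii), I would argue by contradiction: suppose a strict comparison subsolution $v$ (defined on some neighborhood $\mathcal{O}$ of $X_0$) touches $u_*$ from below at $X_0 = (x_0,0) \in F(u_*)$. The strictness gives a gap: on $\partial B_r(X_0) \cap \overline{\{v>0\}}$ one has $v < u_*$ uniformly, for some small $r$. I would then construct a one-parameter family $\{v_t\}_{t \ge 0}$ of strict comparison subsolutions with $v_0 = v$, obtained by sliding $F(v)$ outward into $\{v>0\}$ by an amount $\sim t$ (e.g.\ translating along the interior normal to $F(v)$ at $x_0$, or via a smooth normal flow), so that $v_t$ is still a strict subsolution with coefficient $\alpha_t > 1$ on its free boundary, and so that $v_t < v$ in the common positivity set. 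For any fixed small $t_0 > 0$ the uniform convergence of $u_k$ and the Hausdorff convergence of $\{u_k=0\}$ give $v_{t_0} < u_k$ on $\partial B_r(X_0) \cap \overline{\{v_{t_0}>0\}}$ and $v_{t_0} = 0 \le u_k$ elsewhere in $B_r(X_0)$, for all $k$ large. Decreasing $t$ from $t_0$ to $0$ continuously, let $t_k$ be the first value at which $v_{t_k}$ touches $u_k$ from below in $\overline{B_r(X_0)}$. Harmonicity and the strong maximum principle rule out a touching in $\Omega^+(u_k) \cap \Omega^+(v_{t_k})$; the boundary control rules out a touching on $\partial B_r(X_0)$; the even extension rules out a touching at points where $x_{n+1} \neq 0$ but $v_{t_k}=0$ there; hence the touching point must lie in $F(u_k) \cap F(v_{t_k})$. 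Since $v_{t_k}$ is a strict comparison subsolution in the sense of Definition \ref{defsub}, this contradicts $u_k$ being a viscosity solution. The supersolution case is entirely symmetric.

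The main technical obstacle is the construction of the sliding family $v_t$: one must simultaneously preserve $C^2$ regularity of $F(v_t)$, keep $\alpha_t > 1$ along $F(v_t)$, and arrange that $v_t$ strictly lies below $v$ on its own positivity set so that the first-touching argument is non-vacuous. This can be done using a smooth compactly supported normal perturbation of $F(v)$ near $x_0$ and the explicit expansion $v = \alpha U_0 + o(\rho^{1/2})$ near the free boundary, which shows that small outward normal perturbations depress $v$ by a controlled amount while perturbing $\alpha$ only slightly; the remaining items are then routine.
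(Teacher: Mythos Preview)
This proposition is not proved in the present paper; it is quoted from \cite{DS} (Proposition~7.8 there), so there is no in-paper argument to compare against. I will therefore comment on your proposal on its own merits.

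Your verification of continuity, evenness, and harmonicity in $\Omega^+(u_*)$ is correct and uses the two hypotheses in exactly the right way. The free-boundary half of the argument has the right overall shape, but the sliding step contains a genuine gap. You push $F(v)$ \emph{inward} into $\{v>0\}$, obtaining $v_t<v\le u_*$, and then look for a ``first touching time'' $t_k\in(0,t_0]$ with $u_k$. Nothing forces such a $t_k$ to exist: if, for example, $u_k\ge u_*$ pointwise (which the hypotheses do not exclude), then $v_t\le v\le u_*\le u_k$ for every $t\in[0,t_0]$ and no touching ever occurs; and even if the infimum is $t_k=0$, the point $X_0$ need not lie on $F(u_k)$. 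A related issue is the claimed uniform gap $v<u_*$ on $\partial B_r\cap\overline{\{v>0\}}$: at points of $F(v)\cap\partial B_r$ one has $v=0$ and $u_*$ may also vanish, so the gap can fail.

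The missing idea is that the strictness $\alpha(x_0)>1$ should be spent pushing $F(v)$ \emph{outward}, into $\{v=0\}$, so that the perturbed function is positive at $X_0$; combined with the Hausdorff convergence of $\{u_k=0\}$ to $\{u_*=0\}$ (which produces points $Z_k\in\{u_k=0\}$ with $Z_k\to X_0$), this manufactures an actual crossing $\tilde v(Z_k)>0=u_k(Z_k)$ and makes the first-touching argument nonvacuous. In your write-up the Hausdorff hypothesis is mentioned but never used to create the crossing, and that is precisely the role it plays in the standard proof.
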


\section{Preliminaries on Minimizers}

In this section we prove general theorems about minimizers of the energy function $E$, defined by
 \be \label{E1} E(u,\Omega) = \int_\Omega |\nabla u|^2 dX + \mathcal{H}^n(\{u>0\} \cap \{x_{n+1} = 0\}).\ee
Most of the results in this section are contained in \cite{CafRS} and \cite{AP}, such as existence, optimal regularity, non-degeneracy, and compactness. For completeness, we sketch their proofs.  
We also show that minimizers are viscosity solutions to problem \eqref{FBintro} (with 1 replaced by the appropriate constant).  

\begin{defn} We say that $u$ is a (local) minimizer for $E$ in $\Omega \subset \R^{n+1}$, if $u \in H^1_{loc}(\Omega)$ and for any domain $D \subset \subset  \Omega$ and every function $v \in H^1_{loc}(\Omega)$ which coincides with $u$ in a neighborhood of $\Omega \setminus D$ we have
$$E(u,D) \leq E(v,D).$$
\end{defn}

Existence of minimizers with a given boundary data on $\p \Omega$ follows easily from the lower semicontinuity of the energy $E$.

We remark that this minimization problem is invariant under the scaling \be\label{scaling} u_\lambda(X) = \lambda^{-1/2}u(\lambda X),\ee
that is $u$ is a minimizer if and only if $u_\lambda$ is a minimizer.

As already remarked in the introduction, throughout  this paper we consider only  domains  $\Omega$ and minimizers $u$ such that $$\text{$\Omega$ is
symmetric with respect to $\{x_{n+1} =0\}$},$$$$u \geq 0 \quad \text{is even with respect to $x_{n+1}$}.$$

We recall the following notation, which will be used often in this section. We write,
$$\mathcal B_r = B_r \cap \{x_{n+1}=0\},$$
 and for any function $v \geq 0$ we denote by $$\mathcal{B}_r^+(v) := \{v>0\} \cap \mathcal B_r .$$

\begin{lem}\label{lem1}If $u\geq 0$ is a minimizer to $E$ in $B_1$ then $u$ is subharmonic in $B_1$ and harmonic in $B_1^+.$\end{lem}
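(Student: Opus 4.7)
The plan is to handle the two assertions separately, harmonicity in $B_1^+$ being the easier one.

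For harmonicity in $B_1^+$, I would fix an arbitrary ball $B\subset\subset B_1^+$ and let $h$ be the harmonic replacement of $u$ in $B$, extended by $u$ outside. Since $u\ge 0$ on $\p B$, the maximum principle gives $h\ge 0$, so $h$ is an admissible competitor. Crucially, because $B$ does not intersect $\{x_{n+1}=0\}$, the positivity set on the thin space is unchanged, so $\mathcal H^n(\{h>0\}\cap\{x_{n+1}=0\})=\mathcal H^n(\{u>0\}\cap\{x_{n+1}=0\})$. Minimality then forces $\int_B|\nabla u|^2\le \int_B|\nabla h|^2$, while the Dirichlet principle gives the reverse inequality; equality then forces $u=h$ in $B$, so $u$ is harmonic in $B$ and, $B$ being arbitrary, in all of $B_1^+$.

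For subharmonicity in all of $B_1$, I would test against competitors that can only shrink the positivity set on the thin space. Given $\varphi\in C_c^\infty(B_1)$ with $\varphi\ge 0$, set $v_\eps=(u-\eps\varphi)^+$. Then $v_\eps\ge 0$, $v_\eps=u$ off the support of $\varphi$, and $\{v_\eps>0\}\subseteq\{u>0\}$, so the $\mathcal H^n$-term of $E$ is no larger for $v_\eps$ than for $u$. Splitting $B_1$ into $A_\eps=\{u>\eps\varphi\}$ and $B_\eps=\{u\le\eps\varphi\}$, the minimality inequality $E(u)\le E(v_\eps)$ reduces, after expanding $|\nabla(u-\eps\varphi)|^2$ on $A_\eps$ and noting $\nabla v_\eps=0$ on $B_\eps$, to
\begin{equation*}
\int_{B_\eps}|\nabla u|^2+2\eps\int_{A_\eps}\nabla u\cdot\nabla\varphi-\eps^2\int_{A_\eps}|\nabla\varphi|^2\le 0.
\end{equation*}
Dividing by $\eps$ and letting $\eps\to 0^+$, and using the standard $H^1$ fact that $\nabla u=0$ a.e.\ on $\{u=0\}$ (so $\eps^{-1}\int_{B_\eps}|\nabla u|^2\ge 0$ drops out in the limit together with the $\mathcal O(\eps)$ term), I get $\int_{\{u>0\}}\nabla u\cdot\nabla\varphi\le 0$. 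Since $\nabla u=0$ a.e.\ on $\{u=0\}$, this is the same as $\int_{B_1}\nabla u\cdot\nabla\varphi\le 0$ for every non-negative $\varphi\in C_c^\infty(B_1)$, which is exactly the distributional statement $\Delta u\ge 0$.

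The only subtle point is the limit as $\eps\to 0$: one must check that the set $A_\eps=\{u>\eps\varphi\}$ converges to $\{u>0\}$ in a sense strong enough to pass $\int_{A_\eps}\nabla u\cdot\nabla\varphi\to\int_{\{u>0\}}\nabla u\cdot\nabla\varphi$, and that $\int_{B_\eps}|\nabla u|^2\to 0$. Both follow from dominated convergence together with the a.e.\ vanishing of $\nabla u$ on $\{u=0\}$, so no serious obstacle arises; the thin nature of the surface term never enters, because our perturbation only decreases the positivity set.
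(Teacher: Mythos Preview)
Your proof is correct and follows the same variational idea as the paper, but you work harder than necessary. The paper's definition of minimizer (Section~3) does \emph{not} constrain competitors to be nonnegative, so the paper simply compares $u$ with $u-\eps\varphi$ (no truncation): since $\{u-\eps\varphi>0\}\subset\{u>0\}$ the surface term drops, and expanding $\int|\nabla(u-\eps\varphi)|^2$ gives $\int\nabla u\cdot\nabla\varphi\le 0$ in one line, with no splitting into $A_\eps,B_\eps$ and no limit argument. Your truncated competitor $(u-\eps\varphi)^+$ is what one needs if admissible functions are required to be nonnegative (as the abstract suggests), so your version is in fact the more robust one; it just costs you the dominated-convergence bookkeeping. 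For harmonicity in $B_1^+$ the paper repeats the same test with $\varphi\in C_c^\infty(B_1^+)$ of arbitrary sign, which is equivalent to your harmonic-replacement argument.
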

\begin{proof}
Indeed if $\varphi \geq 0$ is in  $C_0^\infty(B_1)$ then $$\mathcal{H}^n(\mathcal B_1^+(u)) \geq \mathcal{H}^n(\mathcal B_1^+(u-\eps  \varphi)).$$ Thus the minimality of $u$,  $$E(u, B_1) \leq E(u-\eps \varphi, B_1)$$ implies $$\int |\nabla u|^2 dX \leq \int |\nabla(u-\eps \varphi)|^2 dX$$ and hence $$\int \nabla u \nabla \varphi \;dX \leq 0, $$ that is $u$ is subharmonic in $B_1$. Similarly, taking $\varphi \in C_0^\infty(B_1^+)$ we obtain that $u$ is harmonic in $B_1^+$. 
\end{proof}

In view of Lemma \ref{lem1} we can define $u$ pointwise as 

$$u(X) = \lim_{r \rightarrow 0} \fint_{B_r(X)} u \;dY.$$

Optimal regularity and non-degeneracy of a minimizer will follow from the next result.

\begin{lem}\label{lem2}Assume that $u$ minimizes $E$ in $B_2$. If $u(0) \geq C>0$ universal then $B_1 \subset \{u>0\},$ and $u$ is harmonic in $B_1$.\end{lem}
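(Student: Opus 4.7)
The plan is to compare $u$ with its harmonic replacement on a slightly larger ball, exploit Harnack's inequality, and use the minimality to rule out zeros of $u$ inside $\mathcal{B}_1$.

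First, let $v$ be the harmonic function on $B_{3/2}$ with $v=u$ on $\partial B_{3/2}$. Since $u$ is subharmonic by Lemma~\ref{lem1}, the comparison principle yields $u\le v$ in $B_{3/2}$; the maximum principle gives $v\ge 0$; and the mean value property gives $v(0)\ge u(0)\ge C$. Harnack's inequality then furnishes a universal $c_0>0$ with $v\ge c_0 C$ on $\overline{B_1}$. The minimality $E(u,B_{3/2})\le E(v,B_{3/2})$, combined with the orthogonality $\int\nabla v\cdot\nabla(u-v)=0$ (valid because $v$ is harmonic and $u-v\in H^1_0(B_{3/2})$), reduces to the Dirichlet-energy bound
$$\int_{B_{3/2}}|\nabla(v-u)|^2\le \mathcal{H}^n\bigl(\mathcal{B}_{3/2}^+(v)\setminus\mathcal{B}_{3/2}^+(u)\bigr)\le \mathcal{H}^n(\mathcal{B}_{3/2}\cap\{u=0\})\le C_n.$$

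Suppose for contradiction that $D:=\mathcal{B}_1\cap\{u=0\}$ has positive $\mathcal{H}^n$-measure. Then $\varphi:=v-u$ lies in $H^1_0(B_{3/2})$, equals $v\ge c_0 C$ on $D$, and has bounded Dirichlet energy. Applying an $H^{1/2}$-trace inequality on $\{x_{n+1}=0\}$ together with the Sobolev embedding $H^{1/2}(\mathcal{B}_{3/2})\hookrightarrow L^{2n/(n-1)}(\mathcal{B}_{3/2})$ (with a direct capacity argument when $n=1$) yields a universal estimate of the form $\mathcal{H}^n(D)\le C_\ast/C^{2n/(n-1)}$. Combined with a density estimate for $\{u=0\}$ inherent to minimizers of $E$ --- which bounds $\mathcal{H}^n(\mathcal{B}_r(x)\cap\{u=0\})$ from below uniformly whenever $x$ lies in the $\mathcal{H}^n$-closure of $\{u=0\}$ --- this forces $D=\emptyset$ once $C$ is taken large enough universal.

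Once $\mathcal{H}^n(\mathcal{B}_1\cap\{u=0\})=0$, comparing $u$ a second time with its harmonic replacement $w$ in $B_1$ equates the two surface terms, so the Dirichlet minimality of $w$ together with $E(u,B_1)\le E(w,B_1)$ forces $u=w$, i.e., $u$ is harmonic in $B_1$. The strong maximum principle applied to $u\ge 0$ with $u(0)\ge C>0$ then gives $u>0$ throughout $B_1$. The main obstacle is the third step: the $H^{1/2}$-capacity bound naturally yields only smallness of $\mathcal{H}^n(D)$, and the passage to emptiness requires the density estimate for minimizers, a feature reflecting the non-degeneracy built into this variational problem.
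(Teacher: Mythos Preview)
Your argument through the Sobolev/trace step is sound and is in fact one step of the paper's proof: harmonic replacement $v$ on a ball $B_r$, orthogonality, and the fractional Sobolev inequality combine to give
\[
a(r)\;\ge\;\int_{B_r}|\nabla(v-u)|^2\;\ge\;c\,u(0)^2\,a(r')^{(n-1)/n},
\]
with $a(\rho):=\mathcal H^n(\{u=0\}\cap\mathcal B_\rho)$ and $r'<r$ an inner radius on which one controls $v$ from below. The genuine gap is the step you flag yourself: passing from ``$\mathcal H^n(D)$ is small'' to ``$D=\emptyset$'' via a density estimate for the zero phase. That density estimate (Corollary~\ref{cordensity} in the paper) is proved \emph{after} and \emph{using} Lemma~\ref{lem2}; invoking it here is circular. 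There is no cheaper route to such a bound at this stage---continuity of $u$, non-degeneracy, and the positive density of $\{u=0\}$ near free boundary points all sit downstream of the present lemma.

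The paper closes the gap by iterating the comparison rather than appealing to density. Instead of a single harmonic replacement on $B_{3/2}$, one repeats the estimate on the family $B_{r_k}$, $r_k=1+2^{1-k}$, using the lower bound $v\ge c\,2^{-k}u(0)$ on $\mathcal B_{r_k-2^{-k}}$ (which follows from $v(X)\ge c\,v(0)\,r^{-1}\operatorname{dist}(X,\partial B_r)$ for the harmonic replacement on $B_r$). This produces a De Giorgi recursion
\[
a_{k+1}\le C\,2^{4k}\,u(0)^{-2(1+\delta)}\,a_k^{\,1+\delta},\qquad \delta=\tfrac{1}{n-1},
\]
whose superlinear exponent drives $a_k\to 0$ (hence $a(1)=0$) once $u(0)$ exceeds a universal threshold. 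A single step, as in your argument, yields only smallness; the iteration is what converts smallness into zero without any density input.
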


Before the proof we recall the following Sobolev inequality. If $\phi \in H^1(\R^{n+1})$ then 
\be\label{Sob} \int_{\R^{n+1}}|\nabla \phi|^2 dX  \geq c(n) \left( \int_{\R^n \times \{0\}} \phi^{2(1+\delta)} dx\right)^{\frac{1}{1+\delta}}, \quad \delta=\frac{1}{n-1}.\ee

\

\textit{Proof of Lemma $\ref{lem2}.$}
Denote by 
$$a(r) = \mathcal{H}^n(\{u=0\} \cap \mathcal B_r), \quad 1\leq r \leq 2.$$ Let $v$ be the harmonic replacement of $u$ in $B_r$. By minimality,
\be\label{min}\int_{B_r} |\nabla u|^2 dX \leq \int_{B_r} |\nabla v|^2 dX + a(r).\ee We have $$\int_{B_r}|\nabla u|^2 dX = \int_{B_r} (|\nabla v|^2 + 2 \nabla v \cdot \nabla (u-v) + |\nabla(u-v)|^2)dX $$ and hence since $v$ is harmonic and equals $u$ on $\p B_r$
$$\int_{B_r} |\nabla u|^2 dX = \int_{B_r} (|\nabla v|^2 + |\nabla(u-v)|^2)dX .$$ Thus, by the Sobolev inequality \eqref{Sob} and \eqref{min}, the inequality above gives
\begin{align}\label{ar}a(r) \geq \int_{B_r} |\nabla(u-v)|^2 dX & \geq c\left(\int_{\R^n \times \{0\}} (v-u)^{2(1+\delta)} dx\right)^{\frac{1}{1+\delta}}\\\nonumber  & \geq c \left(\int_{\{u=0\} \cap \mathcal B_r}v^{2(1+\delta)}dx\right)^{\frac{1}{1+\delta}}.\end{align}

Since $v\geq 0$ is harmonic in $B_r$ we have $$v(X) \geq c\; v(0) r^{-1}\; dist(X,\p B_r).$$
Thus, in the set $$\{u=0\} \cap \mathcal B_{r-2^{-k}}$$ since $v(0) \geq u(0)$ and $1 \leq r \leq 2$ we have $$v \geq c\; 2^{-k} u(0).$$ Hence from \eqref{ar} we get $$a(r) \geq c \;2^{-2k} u(0)^2 a(r-2^{-k})^{\frac{1}{1+\delta}}.$$ We denote by $$a_k := a(1+2^{-k+1}),$$ thus
\be\label{DeGiorgi}a_{k+1} \leq C 2^{4k} u(0)^{-2(1+\delta)} a_k^{1+\delta}, \quad a_1 \leq C.\ee By De Giorgi iteration, if $u(0) \geq C$ is sufficiently large then $a_k \to 0$ as $k \to \infty.$ Thus $a(1)=0$ and in view of \eqref{min} we get that $u$ is harmonic in $B_1$.
\qed

\

By the scaling \eqref{scaling}, Lemma \ref{lem2} gives that if $u$ is a minimizer in $B_{2r}(X_0),$ with $X_0 \in \{x_{n+1}=0\}$ and $u(X_0) \geq Cr^{1/2}$ then $B_r(X_0) \subset \{u>0\}.$ Thus we immediately obtain the following corollary.

\begin{cor}\label{coruharm}Assume $u$ is a minimizer in $B_2$. Then $u$ is continuous in $B_2$ and thus harmonic in $B_2^+(u)$. Moreover, if  $F(u) \cap \mathcal B_{1} \neq \emptyset,$ then \be\label{d12} u(x,0) \leq C\; dist(x, F(u))^{1/2}, \quad \forall x\in \mathcal{B}_1\ee with $C$ universal.
\end{cor}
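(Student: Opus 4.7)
My plan is to establish the growth estimate first and then deduce continuity from it.

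For the growth bound I rely on the scale-invariant form of Lemma \ref{lem2} recorded just before the corollary: if $u$ minimizes $E$ in $B_{2r}(X_0)$ with $X_0 \in \{x_{n+1}=0\}$ and $u(X_0) \geq C r^{1/2}$ for $C$ universal, then $B_r(X_0) \subset \{u > 0\}$ (and $u$ is harmonic there). Fix $x_0 \in \mathcal{B}_1$ and set $d := \mathrm{dist}(x_0, F(u))$; the bound is trivial when $u(x_0, 0) = 0$, so assume $u(x_0, 0) > 0$. For any $r$ with $d < r \leq (2-|x_0|)/2$, the disk $\mathcal{B}_r(x_0)$ intersects $F(u)$, so $B_r(X_0) \not\subset \{u > 0\}$; the contrapositive of the rescaled lemma forces $u(x_0, 0) < C r^{1/2}$, and sending $r \downarrow d$ yields $u(x_0, 0) \leq C d^{1/2}$ in the regime $d \leq (2-|x_0|)/2$.

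For the complementary regime, where $d$ is bounded below by a fixed universal constant, a separate universal $L^\infty$ bound on $u(x_0, 0)$ is required. I would obtain this by iterating Lemma \ref{lem2} with Harnack's inequality: if $u(x_0, 0) = M$ is very large, Lemma \ref{lem2} at the maximal admissible radius forces $u$ to be positive and harmonic on a ball $B_\rho(X_0)$ of definite universal size; Harnack then propagates $u \geq c M$ to the concentric sub-disk $\mathcal{B}_{\rho/2}(x_0)$, and reapplying Lemma \ref{lem2} at base points of this sub-disk enlarges the positive region. Since both the Harnack constant $c$ and the geometric reach per step are universal, a bounded number of iterations (depending only on $n$) would cover all of $\mathcal{B}_1$ once $M$ exceeds a universal threshold, contradicting $F(u) \cap \mathcal{B}_1 \neq \emptyset$. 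Combined with $d \leq 2$ in this regime, this delivers $u(x_0, 0) \leq C d^{1/2}$ after adjusting $C$.

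For continuity, Lemma \ref{lem1} already supplies harmonicity in $B_2^+$. Testing the minimality against $u + \eps \phi$ for $\phi \in C_0^\infty(B_2^+(u))$ leaves the $\mathcal H^n$-term of $E$ unchanged for small $\eps$ (the support of $\phi$ avoids the zero set on the hyperplane), so $u$ is weakly harmonic, hence smooth, on $B_2^+(u)$. Continuity at a hyperplane point $(x_0, 0)$ with $u(x_0, 0) = 0$ then follows from the growth estimate applied locally together with upper semicontinuity of the subharmonic function $u$: the growth bound forces $u(x, 0) \to 0$ along $\{x_{n+1}=0\}$, and USC combined with $u \geq 0$ upgrades this to $u(X) \to 0$ as $X \to (x_0, 0)$ in $\R^{n+1}$. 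The main obstacle in the argument is the universal bound in the large-$d$ regime: since Lemma \ref{lem2} requires $B_{2r}(X_0) \subset B_2$, scaling alone gives no information at the maximal radius, so one must leverage $F(u) \cap \mathcal{B}_1 \neq \emptyset$ through a Harnack-based propagation rather than through the scaling alone.
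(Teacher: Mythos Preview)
Your approach matches the paper's: both obtain \eqref{d12} from the contrapositive of the rescaled Lemma~\ref{lem2}. The paper simply declares the corollary immediate from that rescaling and does not isolate a large-$d$ regime; your Harnack-chain argument to produce a universal $L^\infty$ bound when $d$ is of unit size is correct and supplies a detail the paper glosses over.

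One point of order is worth flagging. The natural sequence---implicit in the paper's phrasing ``$u$ is continuous and \emph{thus} harmonic in $B_2^+(u)$''---is to establish continuity first: at $(x_0,0)$ with $u(x_0,0)>0$ the rescaled Lemma~\ref{lem2} already yields local harmonicity (hence continuity, and openness of $\{u(\cdot,0)>0\}$), while at zeros upper semicontinuity of the subharmonic $u$ together with $u\ge 0$ suffices on its own---the growth bound is not needed there. With continuity in hand, $B_2^+(u)$ is genuinely open and your test-function step for harmonicity is justified; the growth estimate then follows exactly as you wrote. Your reversed order (growth first, then continuity) still works, because the rescaled lemma already forces $\{u(\cdot,0)>0\}$ to be open before you ever invoke $F(u)$, but that dependence is slightly hidden in your presentation.
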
 

We now easily obtain $C^{1/2}$-optimal regularity of minimizers.

\begin{cor}[Optimal Regularity]\label{optreg}Let $u$ be a minimizer in $B_2$. Then \be\label{C12} \|u\|_{C^{1/2}(B_{1})} \leq C(1+ u( e_{n+1})),\ee with $C$ universal. \end{cor}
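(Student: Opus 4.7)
The plan is to deduce $C^{1/2}$ regularity from a pointwise gradient estimate of the form
$$|\nabla u(X_0)| \le C(1+u(e_{n+1}))\,\rho(X_0)^{-1/2}, \quad X_0\in B_1,$$
where $\rho(X_0):=\mathrm{dist}(X_0,\{u=0\}\cap\overline B_2)$ (the right-hand side being capped by a universal multiple of $1+u(e_{n+1})$ when $\rho$ is bounded below), and then to integrate this bound by a standard Morrey-type argument.

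The first ingredient is the pointwise bound $u(X_0)\le C\,\rho(X_0)^{1/2}$ for $X_0\in B_{3/2}$ with $\rho(X_0)\le 1/4$. This is a direct consequence of Lemma \ref{lem2} and the scale invariance \eqref{scaling}: if $u(X_0)>Cr^{1/2}$ for some $r$ slightly larger than $\rho(X_0)$, then applying Lemma \ref{lem2} to the rescaling $\tilde u(X)=r^{-1/2}u(X_0+rX)$ would force $B_r(X_0)\subset\{u>0\}$, contradicting $\rho(X_0)<r$. Combined with the interior gradient estimate for harmonic functions on the ball $B_{\rho(X_0)}(X_0)$---where $u$ is harmonic by Lemma \ref{lem1} and Corollary \ref{coruharm}---this yields $|\nabla u(X_0)|\le C\,\rho(X_0)^{-1/2}$ in a neighborhood of the free boundary.

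For $X_0\in B_1$ with $\rho(X_0)\ge 1/8$, $u$ is harmonic and non-negative on $B_{1/8}(X_0)$. Using evenness in $x_{n+1}$ I may assume $x_{n+1}(X_0)\ge 0$; then the vertical segment from $X_0$ up to $(x_0,1)$ followed by the horizontal segment to $e_{n+1}$ has universally bounded length and stays in $\{\rho\ge 1/8\}\cap B_{\sqrt 2}$. Indeed, writing $\rho(X)^2 = D^2 + x_{n+1}(X)^2$ with $D=\mathrm{dist}(x_0,\{u(\cdot,0)=0\}\cap \mathcal B_2)$, the function $\rho$ is non-decreasing along the vertical segment, while on the horizontal segment $\rho\ge x_{n+1}=1$. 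A Harnack chain along this path using balls of fixed radius thus yields $u(X_0)\le C\,u(e_{n+1})$, and combining with the interior gradient estimate on $B_{1/8}(X_0)$ (together with the local Harnack inequality to bound the sup in terms of $u(X_0)$) gives $|\nabla u(X_0)|\le C(1+u(e_{n+1}))$ in this regime.

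Finally, given $X,Y\in B_1$ with $t=|X-Y|$: if $\rho(X)\ge 2t$ the whole segment $[X,Y]$ lies in $\{\rho\ge t\}$ where $|\nabla u|\le C(1+u(e_{n+1}))\,t^{-1/2}$, so integrating yields $|u(X)-u(Y)|\le C(1+u(e_{n+1}))\,t^{1/2}$; otherwise $\rho(X)\le 2t$, and both $u(X)$ and $u(Y)$ are at most $C(1+u(e_{n+1}))\,t^{1/2}$ directly from the pointwise bound. The $L^\infty$ part of the $C^{1/2}$ norm is subsumed by the same bounds. The main obstacle is the Harnack chain step: its universal character hinges on the key geometric fact that $\{u=0\}$ is confined to the codimension-one slice $\{x_{n+1}=0\}$, which is exactly what guarantees the existence of chains of universally bounded length and minimal inradius between any point of $\{\rho\ge 1/8\}\cap B_1$ and $e_{n+1}$ inside the harmonic region $B_2\setminus\{u=0\}$.
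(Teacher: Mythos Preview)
Your ``first ingredient''---the bound $u(X_0)\le C\,\rho(X_0)^{1/2}$ with \emph{universal} $C$ for all $X_0\in B_{3/2}$ with $\rho(X_0)\le 1/4$---is where the argument breaks. Lemma~\ref{lem2} (hence its rescaled form) only applies when the center $X_0$ lies on $\{x_{n+1}=0\}$: the paper works with domains symmetric in $x_{n+1}$ and minimizers even in $x_{n+1}$, and the proof of Lemma~\ref{lem2} is built around $a(r)=\mathcal H^n(\{u=0\}\cap\mathcal B_r)$. Your rescaling $\tilde u(X)=r^{-1/2}u(X_0+rX)$ for $X_0\notin\{x_{n+1}=0\}$ is not a minimizer in this sense, so the lemma does not apply. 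Worse, the bound itself is false off the slice: if $0\in F(u)$, a Harnack chain for the positive harmonic function $u$ in $B_2^+$ gives $u(\tfrac14 e_{n+1})\ge c\,u(e_{n+1})$ with universal $c$, while $\rho(\tfrac14 e_{n+1})=1/4$; since $u(e_{n+1})$ is not a priori bounded, $u(\tfrac14 e_{n+1})\le C(1/4)^{1/2}$ cannot hold with universal $C$. This kills the universal gradient bound $|\nabla u|\le C\rho^{-1/2}$ near $F(u)$ at off-slice points, and your near/far dichotomy leaves precisely these points uncovered.

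The paper's proof sidesteps this by decomposing $u=v+w$ in $B_{3/2}^+$ with $v,w$ harmonic, $v=0$ on $\{x_{n+1}=0\}$, $w=u$ on $\{x_{n+1}=0\}$ and $w=0$ on the hemisphere. The piece $v$ carries all the dependence on $u(e_{n+1})$ (odd reflection and Harnack give $\|v\|_{C^{1/2}(B_1^+)}\le C\,u(e_{n+1})$), while $w$ inherits a \emph{universal} $C^{1/2}$ bound from Corollary~\ref{coruharm}, which is stated only on the slice. Your route can be salvaged if you replace the first ingredient by $u(X_0)\le C\bigl(1+u(e_{n+1})\bigr)\rho(X_0)^{1/2}$ for off-slice $X_0$, but proving that effectively requires separating the two contributions as the paper does.
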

\begin{proof} Assume that $F(u) \cap \mathcal B_{1} \neq \emptyset$ otherwise the statement is trivial. We write $u=v+w$ with $v,w$ harmonic in $B_{3/2}^+$ and 
$$v=0 \quad \text{on $\{x_{n+1}=0\}$}, \quad v=u \quad \text{on $\p B_{3/2}^+ \cap \{x_{n+1} >0\}$}$$
$$w=u \quad \text{on $\{x_{n+1}=0\}$}, \quad w=0 \quad \text{on $\p B_{3/2}^+ \cap \{x_{n+1} >0\}.$}$$ Then,
$$\|v\|_{C^{1/2}(B_{1}^+)} \leq Cv(e_{n+1}) \leq C u(e_{n+1}),$$ and by Corollary \ref{coruharm} $$\|w\|_{C^{1/2}(B_{1}^+)} \leq \|u\|_{C^{1/2}(\mathcal B_{3/2})} \leq C.$$
\end{proof}

We now prove non-degeneracy of a minimizer.

\begin{lem}[Non-degeneracy] \label{lem3}Assume $u$ is a minimizer and $$B_1 \subset \{u>0\}.$$ Then, $$u(0) \geq c>0$$ with $c$ universal. \end{lem}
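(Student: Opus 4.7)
The plan is to argue by contradiction: assume $u(0) = \eps$ is small, then build a competitor $v$ that kills $u$ on a small disk $\mathcal{B}_{1/4}$, thereby gaining a measure of order $1$, while paying only a Dirichlet cost of order $\eps^2$. For $\eps$ small enough this violates minimality, forcing $\eps \geq c > 0$ universal.

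First I would observe that since $B_1 \subset \{u>0\}$ we have $B_1 \subset \Omega^+(u)$, hence by Lemma \ref{lem1} $u$ is harmonic in $B_1$. The Harnack inequality together with interior gradient estimates for non-negative harmonic functions then give, setting $\eps := u(0)$,
$$u \le C_0 \eps, \qquad |\nabla u| \le C_0 \eps \qquad \text{in } B_{1/2},$$
for a universal constant $C_0$.

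Next I would fix a cutoff $\eta \in C^\infty_c(B_{3/8})$, even in $x_{n+1}$, with $\eta \equiv 1$ on $B_{1/4}$ and $|\nabla \eta| \le C$, and set $v := u(1-\eta)$. Then $v \ge 0$ is even in $x_{n+1}$ and coincides with $u$ outside $B_{3/8}$, so it is an admissible competitor. Since $u > 0$ on $\mathcal{B}_{1/2}$, we have $v \equiv 0$ on $\mathcal{B}_{1/4}$ while $v > 0$ on $\mathcal{B}_{1/2} \setminus \mathcal{B}_{1/4}$, yielding a measure gain
$$\mathcal{H}^n(\mathcal{B}_{1/2}^+(u)) - \mathcal{H}^n(\mathcal{B}_{1/2}^+(v)) = \mathcal{H}^n(\mathcal{B}_{1/4}) =: c_n > 0.$$
Expanding
$$|\nabla v|^2 - |\nabla u|^2 = -\eta(2-\eta)|\nabla u|^2 - 2(1-\eta)\, u\, \nabla u \cdot \nabla \eta + u^2 |\nabla \eta|^2,$$
the pointwise bounds $u, |\nabla u| \le C_0 \eps$ on $\operatorname{supp}\eta$ make every term $\le C \eps^2$, so
$$\int_{B_{3/8}} (|\nabla v|^2 - |\nabla u|^2)\, dX \le C \eps^2.$$
The minimality inequality $E(u, B_{3/8}) \le E(v, B_{3/8})$ then rearranges to $c_n \le C \eps^2$, i.e. $\eps \ge c > 0$ universal.

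There is no serious obstacle: the one technical input is that the positivity hypothesis upgrades $u$ from subharmonic to genuinely harmonic on $B_1$, which is exactly what allows one to control both $u$ and $\nabla u$ on the cutoff region by $\eps$. This is the mirror image of Lemma \ref{lem2}, where the Sobolev inequality \eqref{Sob} had to be invoked to bound a Dirichlet cost in terms of a power of the measure loss; here the easier harmonic estimates suffice because $u$ is already smooth on the region where we cut.
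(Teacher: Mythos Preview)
Your argument is correct and is essentially identical to the paper's own proof: both use harmonicity of $u$ in $B_1$ to get the bounds $u,|\nabla u|\le C u(0)$ on $B_{1/2}$, then compare $u$ with the competitor $u(1-\varphi)$ for a cutoff $\varphi$ equal to $1$ on $B_{1/4}$, balancing the Dirichlet cost $C u(0)^2$ against the fixed measure gain $\mathcal H^n(\mathcal B_{1/4})$. The only cosmetic differences are that you phrase it as a contradiction and support your cutoff in $B_{3/8}$ rather than $B_{1/2}$.
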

\begin{proof}
Let $\varphi \in C_0^{\infty}(B_{1/2}), \varphi \equiv 1$ in $B_{1/4}.$ Since, $u$ is harmonic in $B_1$
$$\|u\|_{L^\infty(B_{1/2})}, \|\nabla u\|_{L^\infty(B_{1/2})} \leq Cu(0)$$ and we obtain that 
$$\int_{B_1}|\nabla u|^2 dX \geq \int_{B_1} |\nabla (u(1-\varphi))|^2 dX - C u(0)^2.$$ Also, 
$$\mathcal{H}^n(\mathcal B_1^+(u)) \geq \mathcal{H}^n(\mathcal B_1^+(u(1-\varphi)>0)) + c_0.$$ In conclusion, by the minimality of $u$
$$0 \geq -Cu(0)^2 + c_0$$ that is $$u(0) \geq c.$$
\end{proof}

Again by the scaling \eqref{scaling}, the lemma above gives that if $u$ is a minimizer in $B_2$ then $$u(X_0) \geq C dist(X_0,\{u=0\})^{1/2}, \quad \forall X_0 \in \mathcal B_1.$$

In the next lemma, we prove that minimizers satisfy a slightly different type of non-degeneracy which will be used to prove density estimates for the zero phase. 

\begin{lem}\label{lem4} Assume $v \geq 0$ is defined in $B_1,$ harmonic in $B^+_1(v).$ Assume that there is a small constant $\eta>0$ such that \be\label{c12*}\|v\|_{C^{1/2}(B_1)}\leq \eta^{-1},\ee and $v$ satisfies the non-degeneracy condition on $\mathcal B_1$, $$v(X) \geq \eta \; d(X)^{1/2}, \quad X \in \mathcal{B}_1, d(X)= dist(X, \{v=0\}).$$Then if $0 \in F(v)$, $$\max_{\mathcal B_r} v \geq c(\eta)\; r^{1/2}, \quad \forall r \leq 1.$$\end{lem}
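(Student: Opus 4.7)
The plan is first to reduce to a fixed scale and then argue by contradiction and compactness. Since all hypotheses are invariant under $v(X) \mapsto r^{-1/2} v(rX)$, it suffices to show $\max_{\mathcal B_1} v \geq c(\eta)$ whenever $v$ satisfies the hypotheses with $0 \in F(v)$. I would suppose for contradiction a sequence $v^{(k)}$ satisfying the hypotheses with $0 \in F(v^{(k)})$ for every $k$, but $m_k := \max_{\mathcal B_1} v^{(k)} \to 0$. Since $v^{(k)}(0) = 0$, the $C^{1/2}$ bound gives $|v^{(k)}(X)| \leq \eta^{-1} |X|^{1/2}$ on $B_1$, so the $v^{(k)}$ are equicontinuous on compact subsets of $B_1$ and converge, along a subsequence, uniformly on compacts to a limit $v_\ast$.

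The limit $v_\ast$ is continuous, non-negative, even in $x_{n+1}$, harmonic in the open upper half-ball $B_1 \cap \{x_{n+1} > 0\}$ (this set lies inside every $B_1^+(v^{(k)})$ regardless of the zero set), and identically zero on $\mathcal B_1$ because $m_k \to 0$. I would then invoke a Hopf/Schwarz-reflection rigidity to force $v_\ast \equiv 0$ on $B_1$: the odd reflection of $v_\ast$ across $\{x_{n+1}=0\}$ is harmonic in $B_1$, non-negative above and non-positive below, and vanishes on $\mathcal B_1$; if $v_\ast$ were not identically zero, the boundary point lemma would produce a strict sign for $\partial_{x_{n+1}} v_\ast$ at some interior point of $\mathcal B_1$, while the evenness of $v_\ast$ across $\{x_{n+1}=0\}$ makes this normal derivative vanish on the portion of $\mathcal B_1$ on which $v_\ast$ is smooth.

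Finally I would derive the contradiction with $0 \in F(v^{(k)})$ by exhibiting, for each $k$ large, a point $Y_k \in \mathcal B_{1/2}$ with $d_k(Y_k) := \mathrm{dist}(Y_k, \{v^{(k)}=0\}) \geq c_0(\eta) > 0$; then the pointwise non-degeneracy would give $v^{(k)}(Y_k) \geq \eta\, c_0(\eta)^{1/2}$, directly contradicting $m_k \to 0$. Such $Y_k$ is produced by arguing that the positive phase of $v^{(k)}$ in $\mathcal B_{1/2}$ cannot collapse into an arbitrarily thin tubular neighborhood of the zero set: if it did, combining the $C^{1/2}$ bound on $\mathcal B_{1/2}$ with the harmonicity of $v^{(k)}$ on a thick upper half-ball and the same Hopf/evenness rigidity applied to $v^{(k)}$ itself would force $v^{(k)}$ to vanish on a whole $\mathbb{R}^n$-neighborhood of $0$, contradicting $0 \in F(v^{(k)})$.

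The main obstacle is exactly this last step --- converting the pointwise non-degeneracy hypothesis (which only controls $v$ at its positive points) into a quantitative thickness estimate for the positive phase at unit scale. The scale-invariant pointwise bounds alone do not prevent the positive phase from being an $o(1)$-thin region around a free boundary point; it is the harmonic structure in the thick upper half-ball, together with the evenness symmetry, that rules out this degenerate configuration and lets the free boundary point at $0$ "persist" through the blow-down.
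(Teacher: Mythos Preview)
Your compactness strategy has a genuine gap at exactly the point you flag as ``the main obstacle,'' and the gap is not a technicality: the missing step is equivalent to the lemma itself. Observe that if you could produce $Y_k\in\mathcal B_{1/2}$ with $d_k(Y_k)\ge c_0(\eta)$, then the pointwise non-degeneracy immediately gives $v^{(k)}(Y_k)\ge\eta\,c_0^{1/2}$, which is the desired bound at unit scale; the entire compactness apparatus and the identification $v_\ast\equiv 0$ are then superfluous. Conversely, once the lemma is known, the $C^{1/2}$ bound $v(Y)\le\eta^{-1}d(Y)^{1/2}$ turns $\max_{\mathcal B_1}v\ge c$ into the existence of such a $Y$. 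So your step~5 and the lemma are equivalent, and the sketch you give for step~5 does not work: the ``Hopf/evenness rigidity'' you invoke applies to the limit $v_\ast$ (which \emph{is} zero on $\mathcal B_1$), not to $v^{(k)}$ (which is merely small on $\mathcal B_1$, and over whose boundary values on $\partial B_1$ you have no control). Smallness on the thin plate $\mathcal B_{1/2}$ together with harmonicity in $B_1^+$ and evenness does not force $v^{(k)}$ to vanish anywhere; nothing prevents large values of $v^{(k)}$ off the hyperplane from feeding a thin positive sliver through $0$.

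The paper's proof is entirely different and constructive, in the spirit of Caffarelli's argument for the classical one-phase problem. Starting from a point $X_0\in\mathcal B_1^+(v)$ near $0$, one builds a sequence $X_k\in\mathcal B_1$ with $v(X_{k+1})=(1+\delta)v(X_k)$ and $|X_{k+1}-X_k|\le C(\eta)\,d(X_k)$. Since $d(X_k)\sim v(X_k)^2$ and $v(X_k)$ grows geometrically, one gets $|X_{k+1}-X_0|\lesssim v(X_{k+1})^2$, which yields $\max_{\mathcal B_r}v\ge c\,r^{1/2}$ along a sequence of radii and then for all $r$. The existence of $X_{k+1}$ is the substantive step: after normalizing $v(X_k)=1$, if no point in a large ball $\mathcal B_M(X_k)$ has $v\ge 1+\delta$, one compares $v$ with the harmonic function $w$ in $B_M^+(X_k)$ vanishing on $\{x_{n+1}=0\}$ and equal to $v$ on the upper hemisphere; the $C^{1/2}$ bound makes $w\le\delta$ in $B_{d(X_k)}(X_k)$ for $M$ large, so $v\le 1+2\delta$ there, while the H\"older continuity forces $v\le\tfrac12$ near the closest zero $Y_k\in\partial B_{d(X_k)}(X_k)$, contradicting the mean value identity $1=v(X_k)=\fint_{B_{d(X_k)}(X_k)}v$. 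This barrier-plus-mean-value argument is exactly the mechanism your sketch lacks.
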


\begin{proof} The proof follows the lines of Lemma 7 in \cite{C3} (see also \cite{CRS}.)
Given a point $X_0 \in \mathcal B_1^+(v)$ (to be chosen close to 0) we construct a sequence of points $X_k \in  \mathcal B_1$ such that $$v(X_{k+1})=(1+\delta)v(X_k), \quad |X_{k+1}- X_k| \leq C(\eta) d(X_k),$$ with $\delta$ small depending on $\eta.$

Then using the fact that $d(X_k) \sim v^2(X_k)$ and that $v(X_k)$ grows geometrically we find \begin{align*}|X_{k+1} - X_0| &\leq \sum_{i=0}^{k} |X_{i+1}-X_i|  \leq C \sum_{i=0}^k d(X_{i}) \\ & \leq C \sum_{i=0}^k v^2(X_{i}) \leq C v^2(X_{k+1}) \sim d(X_{k+1}).\end{align*} 
Hence for a sequence of $r_k$'s of size $v^2(X_k)$ we have that $$\sup_{\mathcal B_{r_k}(X_0)} v \geq c r_k^{1/2}$$ from which we obtain that 
 $$\sup_{\mathcal B_{r}(X_0)} v \geq c r^{1/2}, \quad \text{for all $r \geq |X_0|.$}$$ 
The conclusion follows by letting $X_0$ go to 0.

We now show that the sequence of $X_k$'s  exists.  Assume we constructed $X_k$. After scaling we may suppose that $$v(X_k) = 1.$$ We call $Y_k$ the point where the distance from $X_k$ to $\{v=0\}$ is achieved. By the assumptions on $v$ ($C^{1/2}$ bound and non-degeneracy), $$c(\eta) \leq d(X_k)=|X_k-Y_k| \leq C(\eta).$$
Assume by contradiction that we cannot find $X_{k+1}$ in $\mathcal B_M(X_k)$ with $M$ large to be specified later, with $$v(X_{k+1}) \geq 1+\delta.$$ Then $$v \leq 1+\delta +w,$$ with $w$ harmonic in $B_M^+(X_k)$, $$w=0 \quad \text{on $\{x_{n+1} =0\}$}, \quad w=v \quad \text{on $\p B_M(X_k) \cap \{x_{n+1}>0\}$}.$$

We have,
$$w \leq C(n) \frac{x_n}{M} \sup_{B_M^+(X_k)} v \leq C \eta^{-1} x_n M^{-1/2} \leq \delta \quad \text{in $B:=B_{d(X_k)}(X_k),$}$$ if $M$ is chosen large depending on $\delta.$ Thus, \be\label{bound1}v \leq 1+2\delta \quad \text{in $B.$}\ee 
On the other hand, $v(Y_k)=0, Y_k \in \p B$. Thus from the H\"older continuity of $v$ we find \be\label{bound2}v \leq \frac 1 2, \quad \text{in $B_{c(\eta)}(Y_k)$}.\ee

If $\delta$ is sufficiently small \eqref{bound1}-\eqref{bound2} contradict that $$1=v(X_k)= \fint_{B} v.$$ \end{proof}

Next we prove a density estimate for the zero phase of minimizers.

\begin{cor}\label{cordensity}If $u$ is a minimizer in $B_2$ and $0 \in F(u)$ then \be\label{ND}\sup_{\mathcal B_r} u \geq \mu r^{1/2},\ee and $$1-\mu \geq \frac{\mathcal{H}^n(\{u=0\} \cap \mathcal B_r)}{\mathcal{H}^n(\mathcal B_r)} \geq \mu$$ where $\mu$ depends on $n$ and $u( e_{n+1})$.\end{cor}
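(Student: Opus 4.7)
The statement splits into three assertions, so the plan proceeds in three independent pieces.

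For the non-degeneracy \eqref{ND}, the plan is to apply Lemma \ref{lem4} to $u$ itself. We need to check its three hypotheses. That $u$ is non-negative and harmonic in $B_1^+(u)$ is Corollary \ref{coruharm}. The bound $\|u\|_{C^{1/2}(B_1)}\le \eta^{-1}$ with $\eta^{-1}=C(1+u(e_{n+1}))$ is Corollary \ref{optreg}. Finally, the pointwise non-degeneracy $u(X)\ge \eta\, d(X)^{1/2}$ on $\mathcal B_1$ is obtained by applying Lemma \ref{lem3} to the rescaling $u_\lambda$ from \eqref{scaling} at $\lambda=d(X)$ around the point $X$, where $B_{d(X)}(X)\subset\{u>0\}$. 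Lemma \ref{lem4} then yields $\sup_{\mathcal B_r}u\ge \mu r^{1/2}$, which is \eqref{ND}.

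For the upper density bound on $\{u=0\}$, the plan is to use the non-degeneracy just proved at scale $r/2$ to produce $x_0\in\mathcal B_{r/2}$ with $u(x_0)\gtrsim r^{1/2}$, and then to use the $C^{1/2}$ bound from Corollary \ref{optreg} to conclude $u\ge c r^{1/2}$ on a ball $\mathcal B_{cr}(x_0)\subset \mathcal B_r$ whose radius is a definite fraction of $r$ (depending on $\mu$ and the $C^{1/2}$ norm). Hence $\mathcal{H}^n(\{u>0\}\cap\mathcal B_r)\ge c r^n$, which gives the desired $1-\mu$ upper bound on the zero-set density.

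For the lower density bound, the plan follows the Sobolev plus minimality strategy already used in Lemma \ref{lem2}. Let $v$ be the harmonic replacement of $u$ in $B_r$. Exactly as in the derivation of \eqref{DeGiorgi} one gets
\[
\int_{B_r}|\nabla(u-v)|^2\le a(r):=\mathcal{H}^n(\{u=0\}\cap\mathcal B_r).
\]
On the other hand, since $w:=v-u\ge 0$ vanishes on $\partial B_r$, its extension by zero lies in $H^1(\R^{n+1})$ and the Sobolev inequality \eqref{Sob} gives
\[
\int_{B_r}|\nabla w|^2 \ge c_n\Bigl(\int_{\mathcal B_r} w^{2(1+\delta)}\,dx\Bigr)^{\frac{1}{1+\delta}}, \qquad \delta=\frac{1}{n-1}.
\]
The core of the argument is to show $w\ge c r^{1/2}$ on a subdisk $\mathcal B_{\rho r}$ with $\rho$ universal. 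The non-degeneracy in \eqref{ND} applied at scale $r/2$ gives $v(x_0)\ge u(x_0)\ge c r^{1/2}$ at some $x_0\in\mathcal B_{r/2}$, and Harnack for the non-negative harmonic function $v$ on $B_r$ propagates this to $v\ge c r^{1/2}$ throughout $B_{r/4}$. Since $u(0)=0$, Corollary \ref{optreg} gives $u\le C(\rho r)^{1/2}$ on $\mathcal B_{\rho r}$, which is smaller than $\tfrac12 c r^{1/2}$ once $\rho$ is a small universal constant. Therefore $w\ge \tfrac12 c r^{1/2}$ on $\mathcal B_{\rho r}$ and plugging this into the Sobolev inequality gives, using the precise matching of powers afforded by $\delta=1/(n-1)$,
\[
a(r)\ge \int_{B_r}|\nabla w|^2 \ge c\bigl(r^{1+\delta}\cdot(\rho r)^n\bigr)^{\frac{1}{1+\delta}}=c\, r^n.
\]

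The main obstacle is the last step: one must convert the macroscopic non-degeneracy provided by Lemma \ref{lem4} into a pointwise lower bound for the harmonic replacement $v$ on a ball of radius comparable to $r$, then arrange that the $C^{1/2}$ upper bound on $u$ near the free boundary point $0$ does not destroy this gap. The fact that this whole scheme closes is exactly the statement that the Sobolev exponent $\delta=\frac{1}{n-1}$ makes the two powers of $r$ balance into $r^n$.
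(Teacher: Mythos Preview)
Your proof is correct. The first two parts match the paper's approach (non-degeneracy via Lemma~\ref{lem4}, then a ball of positivity from the $C^{1/2}$ bound; the paper phrases this last step as an ``easy'' consequence, presumably via Lemma~\ref{lem2}, but your Hölder argument is equally valid).

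For the lower density bound your argument is genuinely different from the paper's. The paper argues by contradiction: it picks $X_0\in\mathcal B_{1/8}$ with $u(X_0)\ge \mu/2$ from \eqref{ND}, and then observes that if $\mathcal H^n(\{u=0\}\cap\mathcal B_1)$ were smaller than some $\delta$, the De~Giorgi iteration already set up in \eqref{DeGiorgi} (with $u(X_0)$ in place of $u(0)$) would force $B_{1/4}(X_0)\subset\{u>0\}$, contradicting $0\in F(u)\cap\mathcal B_{1/4}(X_0)$. You instead produce a direct quantitative lower bound: Harnack on the harmonic replacement $v$ plus the $C^{1/2}$ smallness of $u$ near $0$ give $v-u\ge c r^{1/2}$ on a subdisk of radius $\rho r$, and the Sobolev inequality \eqref{Sob} with $\delta=\tfrac{1}{n-1}$ converts this into $a(r)\ge c r^n$. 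Your route avoids the iteration entirely and is slightly more constructive; the paper's route is shorter since it simply recycles the machinery of Lemma~\ref{lem2} without any new computation. Both use the same ingredients (minimality versus harmonic replacement, the trace--Sobolev inequality, and non-degeneracy) arranged differently.
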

\begin{proof} By scaling it suffices to prove the corollary only for $r=1$. The first statement is contained in Lemma \ref{lem4}, in view of the optimal regularity and non-degeneracy of minimizers. This easily implies the left inequality in the density estimate.
We now prove the other inequality.

From \eqref{ND}, for some $X_0 \in \mathcal B_{1/8}$, $u(X_0) \geq \frac \mu 2.$ Then, from the proof of Lemma \ref{lem1} with $u(X_0)$ replacing $u(0)$ we see that if $$\mathcal H^n(\{u=0\} \cap \mathcal B_{1/2}(X_0)) \leq \mathcal H^n(\{u=0\} \cap \mathcal B_1) \leq \delta$$ for $\delta$ sufficiently small depending on $\mu$, then by De Giorgi iteration argument (see \eqref{DeGiorgi})$$B_{1/4}(X_0) \subset \mathcal \{u>0\}.$$ This contradicts the fact that $0 \in F(u) \cap \mathcal B_{1/4}(X_0).$
\end{proof}

From the density estimate we immediately obtain the following corollary.

\begin{cor}Let $u$ be a minimizer. Then $\mathcal{H}^n(F(u)) =0.$\end{cor}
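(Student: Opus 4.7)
The plan is to deduce the result directly from the two-sided density estimate of the preceding corollary together with the Lebesgue differentiation theorem. Recall that $F(u) \subset \mathbb{R}^n \times \{0\}$, so $\mathcal{H}^n$ restricted to the slice agrees (up to normalization) with $n$-dimensional Lebesgue measure, and Lebesgue differentiation applies.

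First I would localize. Fix any $x_0 \in F(u)$ and choose $R>0$ small so that $B_{4R}(x_0)$ lies in the domain of minimality. The rescaling \eqref{scaling} turns $u$ into a minimizer on $B_2$ with $0 \in F(u)$, and the value of the rescaled function at $e_{n+1}$ is controlled by the optimal regularity bound of Corollary \ref{optreg}. Hence Corollary \ref{cordensity} applies and, after undoing the rescaling, yields a constant $\mu=\mu(x_0)>0$ such that
\[
\mu \;\le\; \frac{\mathcal{H}^n\bigl(\{u=0\}\cap \mathcal B_r(x_0)\bigr)}{\mathcal{H}^n(\mathcal B_r(x_0))} \;\le\; 1-\mu
\qquad \text{for all } 0<r<R.
\]
In particular $x_0$ is neither a point of Lebesgue density $1$ for the set $\{u(\cdot,0)=0\}$ nor a point of Lebesgue density $1$ for $\{u(\cdot,0)>0\}$.

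Now I would appeal to the Lebesgue differentiation theorem: since the two measurable sets $\{u(\cdot,0)=0\}$ and $\{u(\cdot,0)>0\}$ partition $\mathbb{R}^n$, $\mathcal{H}^n$-almost every point of $\mathbb{R}^n$ is a point of density $1$ for exactly one of them. By the previous paragraph, no point of $F(u)$ has this property, so $F(u)$ is contained in the exceptional null set. Hence $\mathcal{H}^n\bigl(F(u)\cap \mathcal B_R(x_0)\bigr)=0$, and covering $F(u)$ by countably many such neighborhoods concludes $\mathcal{H}^n(F(u))=0$.

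There is no serious obstacle here; the only point requiring a bit of care is that the constant $\mu$ in Corollary \ref{cordensity} depends on $u(e_{n+1})$, which is why I localize and rescale first so that the hypothesis of that corollary is met uniformly in a neighborhood of each free boundary point.
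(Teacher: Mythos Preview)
Your argument is correct and is exactly the standard deduction the paper has in mind: the corollary is stated immediately after the density estimate with the remark ``From the density estimate we immediately obtain the following corollary,'' and your use of Lebesgue differentiation to pass from the two-sided density bound to $\mathcal{H}^n(F(u))=0$ is precisely that immediate deduction spelled out in full.
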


\begin{rem}\label{strong} We remark that if $u \in C^{1/2}(B_1) \cap H^1(B_1)$,  $u$ is harmonic in $B^+_1(u)$ and $\mathcal H^n(F(u)) =0$ then $u$ satisfies the following integration by parts identity,
$$\int_{B_1} |\nabla u|^2 dX = \int_{\p B_1} u u_\nu\;d\sigma.$$ To justify this equality we notice that since $u$ is harmonic in $B^+_1$
$$\int_{B_1} |\nabla u|^2 dX = \lim_{\eps \to 0} \int_{B_1 \setminus \{|x_{n+1}| \leq \eps\}}  |\nabla u|^2 dX = \int_{\p B_1} u u_\nu\;d\sigma+\lim_{\eps \to 0} \int_{|x_{n+1}| = \eps} u  u_\nu.$$ However, $$\lim_{\eps \to 0} \int_{|x_{n+1}| = \eps} u  u_\nu=0,$$ since $u |\nabla u| \leq K$ (because $u(X) \leq K dist(X,\{u=0\})^{1/2}$) , $\mathcal H^n(F(u))=0$ and $$\lim_{\eps \to 0} u u_\nu(x,\eps) =0,\quad \text{if $x \not \in F(u)$}.$$
\end{rem}

We now prove a compactness result for minimizers.

\begin{thm}\label{compact} Assume $u_k$ are minimizers to $E$ in $\Omega$ and $u_k \to u$ uniformly locally. Then $u$ is a minimizer to $E$ and $\{u_k=0\} \to \{u=0\}$ locally in the Hausdorff distance.
\end{thm}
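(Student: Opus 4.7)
The plan is to first establish Hausdorff convergence of the zero sets via non-degeneracy, then deduce minimality by comparing $u_k$ with a cutoff competitor built from $v$ and $u_k$ and passing to the limit using weak $L^2$ convergence together with the Hausdorff convergence just obtained. For the Hausdorff convergence, one inclusion is immediate from the uniform convergence: if $X_{k_j}\to X_*$ with $u_{k_j}(X_{k_j})=0$, then $u(X_*)=0$. For the reverse, I would argue by contradiction: if some $X_0=(x_0,0)\in\{u=0\}$ were bounded away from $\{u_k=0\}$ along a subsequence, then $\mathcal B_r(X_0)\subset\{u_{k_j}(\cdot,0)>0\}$ for some fixed $r>0$, and Lemma \ref{lem3} rescaled via \eqref{scaling} forces $u_{k_j}(X_0)\ge c r^{1/2}$, contradicting $u_{k_j}(X_0)\to 0$.

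For the minimality, fix $D\subset\subset\Omega$ and a nonnegative even competitor $v$ coinciding with $u$ outside some $D'\subset\subset D$; extend by $v\equiv u$ on $\Omega\setminus D'$. Pick $D''$ with $D'\subset\subset D''\subset\subset D$ and an even cutoff $\phi\in C_c^\infty(D'')$ with $\phi\equiv 1$ on $D'$. Set $v_k:=\phi v+(1-\phi)u_k$, a valid competitor for $u_k$ on $D$ (equal to $v$ on $D'$, to $u_k$ outside $D''$). On the annulus $A:=D''\setminus D'$, where $v=u$, one computes $\nabla v_k=\phi\nabla u+(1-\phi)\nabla u_k+\nabla\phi\,(u-u_k)$; the convexity bound $|\phi a+(1-\phi)b|^2\le\phi|a|^2+(1-\phi)|b|^2$, combined with the uniform convergence $\|u_k-u\|_{L^\infty(D)}\to 0$, yields
\[
\int_A|\nabla v_k|^2\,dX\le(1+\varepsilon)\int_A\bigl[\phi|\nabla u|^2+(1-\phi)|\nabla u_k|^2\bigr]dX+o(1),
\]
while $\{v_k>0\}\cap A\subseteq(\{v>0\}\cup\{u_k>0\})\cap A$.

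Inserting these bounds into $E(u_k,D)\le E(v_k,D)$ and cancelling the identical contributions on $D\setminus D''$ produces, after rearrangement,
\[
E(u_k,D')\le E(v,D')+(1+\varepsilon)\!\int_A|\nabla u|^2\,dX+\varepsilon\!\int_D|\nabla u_k|^2\,dX+\mathcal{H}^n(A\cap\{x_{n+1}=0\})+o(1).
\]
A uniform bound $\int_D|\nabla u_k|^2\le C$ follows from comparing $u_k$ with its own harmonic extension on a slightly larger subdomain (together with the trivial bound on the measure term), so $\nabla u_k\rightharpoonup\nabla u$ weakly in $L^2_{loc}$, giving the Dirichlet lower semicontinuity on $D'$. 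The Hausdorff convergence of the first step delivers the measure lower semicontinuity $\mathcal{H}^n(\{u>0\}\cap D'\cap\{x_{n+1}=0\})\le\liminf_k\mathcal{H}^n(\{u_k>0\}\cap D'\cap\{x_{n+1}=0\})$, since any relatively open $\mathcal O\subset\subset\{u>0\}$ lies in $\{u_k>0\}$ for large $k$. Letting $k\to\infty$, then shrinking $D''\searrow D'$ (so $\int_A|\nabla u|^2$ and $\mathcal{H}^n(A\cap\{x_{n+1}=0\})$ both vanish), then $\varepsilon\to 0$ gives $E(u,D')\le E(v,D')$; since $v\equiv u$ outside $D'$, this is $E(u,D)\le E(v,D)$.

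The main obstacle is the absence of strong $H^1$ convergence: the $(1-\phi)|\nabla u_k|^2$ contribution in the annulus cannot be matched directly to its $u$-analogue and has to be absorbed into the uniform $L^2$ bound via the small factor $\varepsilon$ introduced by the Young-type estimate. Coordinating this with the Hausdorff lower semicontinuity for the positivity-set term, and arranging the shrinking of the annulus so that both error contributions vanish before $\varepsilon$ is sent to zero, is what couples the two ingredients of the proof.
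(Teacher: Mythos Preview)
Your argument is correct and takes a genuinely different route from the paper's. The paper first establishes that $\mathcal H^n(F(u))=0$ (via the density estimates of Corollary~\ref{cordensity} transferred to the limit) and then uses the integration-by-parts identity of Remark~\ref{strong} to upgrade weak to \emph{strong} $H^1$ convergence of the $u_k$; with strong convergence in hand, the paper passes to the limit directly on both sides of $E(u_k,B_1)\le E(v_k,B_1)$, using also that $\chi_{\{u_k>0\}}\to\chi_{\{u>0\}}$ $\mathcal H^n$-a.e. You bypass all of this: you use only weak $L^2$ convergence of gradients together with the lower semicontinuity of each energy term, and absorb the mismatch on the cutoff annulus through the $\varepsilon$-parameter and the shrinking $D''\searrow D'$. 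This is more elementary --- it needs neither the density estimate nor the boundary identity --- at the price of the extra layer of limits. Note, incidentally, that the measure lower semicontinuity you invoke does not actually require the Hausdorff convergence; it follows already from uniform convergence and the openness of $\{u>0\}$.

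One small imprecision: your stated justification for the uniform bound $\int_D|\nabla u_k|^2\le C$ via ``comparing $u_k$ with its own harmonic extension'' is circular as written, since the harmonic replacement minimizes Dirichlet energy among competitors with the same trace and hence gives no upper bound on $\int|\nabla u_k|^2$. The bound is nonetheless immediate from the Caccioppoli inequality for nonnegative subharmonic functions (Lemma~\ref{lem1}) together with the uniform $L^\infty$ bound coming from locally uniform convergence.
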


\begin{proof} Assume for simplicity $\Omega= B_2.$ Since the $u_k(e_{n+1})$ are uniformly bounded, the $u_k$ are uniformly non-degenerate and $C^{1/2}$ in $B_{1}$ in view of Corollary \ref{optreg}.

First we show that $\{u_k=0\} \to \{u=0\}$ locally in the Hausdorff distance.  If $X_0 \in \mathcal B_1$  and $\mathcal B_\eps(X_0) \subset \{u>0\}$ then by the uniform convergence of the $u_k$, $ \mathcal B_{\eps /2}(X_0) \subset \{u_k>0\}$ for all large $k$. 

If  $\mathcal B_\eps(X_0) \subset \{u=0\}$ then $\mathcal B_{\eps /2}(X_0) \subset \{u_k=0\}$. Otherwise, by Lemma \ref{lem3} $$F(u_k) \cap \mathcal B_{\eps/2}(X_0) \neq \emptyset.$$ Call $Y_k \in F(u_k) \cap \mathcal B_{\eps/2}(X_0)$, then  by the non-degeneracy of the $u_k$, 
$$\sup_{\mathcal B_\eps(X_0)} u_k \geq \sup_{\mathcal B_{\eps/2}(Y_k)} u_k \geq \mu \eps^{1/2}$$ and we reach a contradiction using that $u_k$ converges uniformly to $u.$

In particular 
 $$\chi_{\{u_k>0\}}(x) \to \chi_{\{u>0\}}(x)\quad \text{for all $x \not \in F(u)$}.$$

Next, we show that $$\mathcal H^n(F(u))=0,$$
hence the convergence above holds $\mathcal H^n$-a.e.

Indeed, assume $X_0 \in F(u) \cap \mathcal B_{1}.$ Then we can find $Y_k \in F(u_k)$ such that $Y_k \to X_0.$ From Corollary \ref{cordensity} applied to the $u_k$ on balls centered at the $Y_k$ and the uniform convergence of the $u_k$ we obtain that the limit $u$ satisfies the same estimates in the conclusion of Corollary \ref{cordensity}. 

We now prove that $u$ is a minimizer for $E$. First we notice that $$u_k \to u, \quad \text{in $H^1(B_1)$}.$$
Indeed, since $u_k \to u$ uniformly, we have that $\nabla u_k \rightharpoonup \nabla u$ weakly in $H^1(B_1)$ and by Remark \ref{strong} and Lebesgue dominated convergence theorem,
$$\int_{B_1} |\nabla u_k|^2 \to \int_{B_1} |\nabla u|^2.$$

Let $v \in H^1(B_1)$ with $v =u$ outside $B_{1-\delta},$ and let $\varphi$ be a cut-off function with $\varphi = 1$ in $B_{1-\delta}$ and $\varphi=0$ outside  $B_{1-\delta/2}.$
Define,
$$v_k = \varphi v + (1-\varphi) u_k,$$
then, by the minimality of the $u_k$
$$E(v_k,B_1) \geq E(u_k,B_1).$$ We let $k \to \infty$ in this inequality and use that $$v_k \to v \quad \text{in $H^1$}, \quad \chi_{\{v_k >0\}} \to \chi_{\{v>0\}} \quad \text{$\mathcal H^n$- a.e.}$$
 to obtain the desired inequality $$E(v, B_1) \geq E(u, B_1).$$ 
\end{proof}

Next, we want to prove that minimizers are viscosity solutions. For this purpose we need the following proposition, which we will also use later in our dimension reduction argument in Section 5.

\begin{prop}\label{prop}Assume $u$ is constant in the $e_1$ direction i.e. $$u(x_1,x_2,\ldots, x_{n+1}) = v (x_2,\ldots x_{n+1}).$$ Then, $u$ is a minimizer in $\R^{n+1}$ if and only if $v$ is a minimizer in $\R^n.$\end{prop}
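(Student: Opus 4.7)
The plan is to first establish the product-domain identity
$$E(u, I \times D) = |I| \cdot E(v, D)$$
for every interval $I \subset \R$ and every bounded symmetric $D \subset \R^n$, and then to exploit it in both directions: by slicing in $x_1$ for the implication ``$v$ min $\Rightarrow$ $u$ min'', and by a slab construction with a linear cutoff for the converse.

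The identity itself is immediate from Fubini: since $\partial_{x_1} u \equiv 0$, one has $|\nabla u|^2 = |\nabla_Y v|^2$, and $\{u > 0\} \cap \{x_{n+1} = 0\} \cap (I \times D)$ is the product $I \times \bigl(\{v(\cdot, 0) > 0\} \cap D \cap \{y_n = 0\}\bigr)$, whose $\mathcal H^n$-measure (which on the flat hyperplane is $n$-dimensional Lebesgue measure) factors as $|I|$ times the $\mathcal H^{n-1}$-measure of the $\R^{n-1}$-factor.

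For the ``easy'' direction I would take an admissible perturbation $\tilde u$ of $u$ supported inside a slab $I \times D$. For a.e.\ $x_1 \in I$ the slice $\tilde u(x_1, \cdot)$ lies in $H^1(\R^n)$ and coincides with $v$ in a neighborhood of $\R^n \setminus D$: for $x_1$ near $\partial I$ the whole slice already equals $v$, while for $x_1$ in the interior of $I$ this is needed only at $Y$ near $\partial D$, where $\tilde u = u = v$ by hypothesis. Minimality of $v$ then gives $E(v, D) \le E(\tilde u(x_1, \cdot), D)$, and integrating in $x_1$ (using $|\nabla \tilde u|^2 \ge |\nabla_Y \tilde u|^2$ and Fubini for the Hausdorff term) yields $E(u, I \times D) \le E(\tilde u, I \times D)$.

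For the converse I would argue by contradiction. If $\tilde v$ saves $\delta > 0$ over $v$ on some $D$ (with $\tilde v = v$ off a compact $K \subset D$), choose a linear cutoff $\psi_R$ equal to $1$ on $[-R,R]$ and $0$ outside $[-R-1, R+1]$, and set
$$\tilde u_R(x_1, Y) := \psi_R(x_1)\,\tilde v(Y) + (1 - \psi_R(x_1))\,v(Y)$$
on $\Omega_R := (-R-1, R+1) \times D$. Then $\tilde u_R = u$ outside $(-R-1, R+1) \times K$, so it is an admissible competitor for $u$. By the product identity the core slab $(-R, R) \times D$ contributes $2R \cdot E(\tilde v, D)$, and the two transition caps contribute at most a constant $C$ independent of $R$: for the Dirichlet piece, from $|\psi_R'|=1$ on a set of total length $2$ combined with the $L^2$-boundedness of $\tilde v - v$, $\nabla v$, and $\nabla \tilde v$; for the Hausdorff piece, from the inclusion of $\{\tilde u_R > 0\} \cap \{x_{n+1}=0\}$ over each cap into cap$\,\times(\{\tilde v > 0\} \cup \{v > 0\})$. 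Combining, $E(\tilde u_R, \Omega_R) \le 2(R+1) E(v, D) - 2R\delta + C$, whereas $E(u, \Omega_R) = 2(R+1) E(v, D)$, contradicting the minimality of $u$ once $R$ is large enough. The main obstacle is precisely this uniform-in-$R$ cap estimate, which converts a fixed cross-sectional gain $\delta$ into an unbounded bulk gain $2R\delta$ that eventually beats any bounded interpolation cost.
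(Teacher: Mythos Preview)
Your proposal is correct and follows essentially the same approach as the paper: slicing in $x_1$ for the implication ``$v$ minimizer $\Rightarrow$ $u$ minimizer'', and a cutoff interpolation on a slab of width $2R$ with a uniform-in-$R$ cap estimate for the converse. The only cosmetic difference is that you phrase the slab argument by contradiction (fixing a gain $\delta$ and letting $2R\delta$ beat the bounded cap cost), whereas the paper argues directly, obtaining $2R\,E(v,B_K) \le 2(R-1)\,E(w,B_K) + M$ and dividing by $R$.
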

\begin{proof}
Assume $u$ is a minimizer in $\R^{n+1}$ and let $w(x_2,\ldots, x_{n+1})$ be a function which coincides with $v$ outside $B_K \subset \R^n.$ Then define $$\tilde u:= \varphi(x_1)w(x_2,\ldots, x_{n+1}) + (1-\varphi(x_1))v(x_2,\ldots, x_{n+1}),$$ with
$$\varphi(x_1) = \begin{cases}1 \quad \text{if $|x_1| \leq R-1,$}\\ 0 \quad \text{if $|x_1| \geq R.$}\end{cases}$$ Then $\tilde u$ coincides with $u$ outside of $\Omega:= [-R,R] \times B_K.$ Hence,
$$E(u,\Omega) \leq E(\tilde u, \Omega),$$ that implies

$$2R E(v,B_K) \leq 2(R-1) E(w,B_K) + M$$ with $M$ depending on $w$ and $v$ but not on $R$. We let $R \to \infty$ and obtain $$E(v,B_K) \leq E(w,B_K).$$

Viceversa, assume that $v$ is a minimizer in $\R^n.$ Then  if $w = u$ outside of $\Omega$ with $\Omega$ as above, $$E(w, \Omega) \geq \int_{-R}^R E(w(x_1,\cdot), B_K) dx_1.$$ Using that $v$ is a minimizer,
 $$E(w, \Omega) \geq \int_{-R}^R E(v(x_2,\ldots, x_{n+1}), B_K) dx_1= E(u,\Omega).$$\end{proof}

\begin{prop}\label{min-is-visc} If $u$ is a minimizer for $E$  then $u$ is a viscosity solution to $$\begin{cases}\Delta u = 0 \quad \text{in $\{u>0\}$}\\ \ \\ \dfrac{\p u}{\p U_0} = \sqrt{\dfrac 2 \pi} \quad \text{on $F(u)$}.\end{cases}$$
\end{prop}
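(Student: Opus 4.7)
For the harmonicity, Lemma \ref{lem1} gives $\Delta u=0$ in $B_1^\pm$, even reflection extends harmonicity across the positive portion of $\{x_{n+1}=0\}$, and Corollary \ref{cordensity} gives $\mathcal H^n(F(u))=0$, so $\Delta u = 0$ distributionally on all of $\Omega^+(u)$.

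For the free boundary condition I argue by contradiction: assume a strict subsolution $v$ with coefficient $\alpha_v>\sqrt{2/\pi}$ touches $u$ from below at $X_0 = (x_0,0)\in F(u)$. The density estimate forces $X_0\in F(v)$ (else $v\equiv 0$ in a neighborhood of $X_0$, so $\{u=0\}\cap\mathcal B_r(x_0)=\{x_0\}$, contradicting the density bound of Corollary \ref{cordensity}). Since $v\le u$ and $F(v)$ is $C^2$ through $X_0$, the positive phase of $u$ contains a half-ball inside $\{v>0\}$, so Lemma \ref{oneside} delivers the expansion $u=\alpha_u U_0+o(|X-X_0|^{1/2})$ at $X_0$ with $\alpha_u\ge\alpha_v>\sqrt{2/\pi}$ along the normal $\nu$ to $F(v)$. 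To contradict minimality I construct a competitor $\tilde u$ in $D=B_\rho(X_0)$ by relaxing the zero constraint of $u$ along a bump $\eta\in C_0^\infty(\mathcal B_\rho(x_0))$, $\eta\ge 0$, $\eta(x_0)>0$: after flattening $F(v)$ to $\{x_n=0\}$ near $x_0=0$, let $\tilde u$ be the Dirichlet minimizer in $D$ of $\int|\nabla w|^2$ among $w$ satisfying $w=u$ on $\partial D$ and $w=0$ on $\{u=0\}\cap\{x_n\le-\delta\eta(x')\}$. This reduced zero set is a subset of $\{u=0\}$, so $u$ is admissible; the maximum principle gives $\tilde u\ge u\ge 0$, and the orthogonality $\int\nabla\tilde u\cdot\nabla(u-\tilde u)=0$ yields $\int_D|\nabla u|^2-\int_D|\nabla\tilde u|^2 = \int_D|\nabla w|^2$ with $w=u-\tilde u$.

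The energy difference is computed via an anisotropic blow-up at $X_0$, rescaling only the normal variables $(x_n,x_{n+1})$ by $\delta$ and keeping $x'$ fixed. Using the $C^{1,\alpha}$ expansion of Remark \ref{rem0} together with the compactness of minimizers (Theorem \ref{compact}), the rescaled $W^{(x'_0)}(\tau,\sigma)=\delta^{-1/2}w(x'_0,\delta\tau,\delta\sigma)$ converges in each tangential 2D slice to
$$W_\infty^{(x'_0)}(\tau,\sigma)=-\alpha_u\bigl[U_0(\tau+\eta(x'_0),\sigma)-U_0(\tau,\sigma)\bigr],$$
whose Dirichlet integral on $\R^2$ is evaluated by Green's identity on the cut $\{\sigma=0,\tau\le 0\}$ (using $\partial_\sigma U_0|_{\sigma=0^+}=1/(2\sqrt{-\tau})$ for $\tau<0$ and the elementary identity $\int_{-a}^0\sqrt{\tau+a}/\sqrt{-\tau}\,d\tau=a\pi/2$) to give $\int_{\R^2}|\nabla W_\infty^{(x'_0)}|^2=\alpha_u^2\,\eta(x'_0)\,\pi/2$. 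Tangential integration then yields
$$\int_D|\nabla w|^2=\alpha_u^2\,\tfrac{\pi}{2}\,\delta\int\eta\,dx'+o(\delta),$$
while the thin measure increases by $\delta\int\eta\,dx'+o(\delta)$ (the freed region, whose $\mathcal H^n$-measure converges by Hausdorff convergence of $\{u=0\}$ to the model half-space combined with the density estimate). Combining,
$$E(\tilde u,D)-E(u,D)=\Bigl(1-\alpha_u^2\,\tfrac{\pi}{2}\Bigr)\delta\int\eta\,dx'+o(\delta)<0$$
for $\delta$ small, contradicting minimality. The supersolution case is symmetric, shrinking instead of extending the positive set.

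\textbf{Main obstacle.} The heart of the proof is the sharp 2D Dirichlet computation that produces the constant $\pi/2$ and so fixes the Euler-Lagrange coefficient as $\sqrt{2/\pi}$. The blow-up analysis requires the $C^{1,\alpha}$ expansion of $u$ (not just the $C^{1/2}$ leading term) to control the remainders uniformly in the slicing, and it relies on Hausdorff convergence of $\{u=0\}$ via Theorem \ref{compact} combined with the density estimate to upgrade pointwise convergence to the required thin-measure asymptotic.
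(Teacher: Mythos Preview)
Your overall strategy---identify the expansion coefficient $\alpha_u$ at a one-sided regular point and show that minimality forces $\alpha_u^2\,\pi/2=1$---is the right one, and the 2D Dirichlet computation that produces the constant $\pi/2$ is exactly what is needed. But the way you carry it out has a real gap, and it differs substantially from the paper's route.

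\textbf{The gap.} Your anisotropic blow-up, rescaling $(x_n,x_{n+1})$ by $\delta$ while keeping $x'$ fixed, requires you to know the asymptotic behavior of $u$ along \emph{every} tangential slice $\{x'=x'_0\}$ in the support of $\eta$. But the only information you have is the one-point expansion $u=\alpha_u\,U_0+o(|X|^{1/2})$ at $X_0$: Lemma~\ref{oneside} gives nothing at nearby points $(x'_0,0,0)$, and you cannot invoke Remark~\ref{rem0} here because that remark (and the $C^{1,\alpha}$ estimates behind it) presupposes that $F(u)$ is $C^1$ near the point---precisely what you do not yet know for the minimizer $u$. So the claimed convergence $W^{(x'_0)}\to W_\infty^{(x'_0)}$ is unjustified for $x'_0\ne x_0$, and with it the tangential integration and the thin-measure asymptotic (you only know $\{u=0\}\subset\{x_n\le0\}$ after flattening $F(v)$, not that $F(u)$ is close to $\{x_n=0\}$ away from $x_0$). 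A smaller issue: your argument that $X_0\in F(v)$ via the density estimate is not correct as written; $v\equiv0$ on $\mathcal B_r(x_0)$ places no constraint on $\{u=0\}$.

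\textbf{How the paper proceeds instead.} The paper avoids all of this by doing an \emph{isotropic} blow-up. From $u=\alpha\,U_0+o(|X|^{1/2})$ the rescalings $\lambda^{-1/2}u(\lambda X)$ converge uniformly to $\alpha\,U_0$; by the compactness Theorem~\ref{compact} the limit $\alpha\,U_0$ is itself a minimizer, and by Proposition~\ref{prop} it suffices to regard it as a minimizer in $\R^2$. Now the competitor is built for the explicit function $\alpha\,U_0$, not for $u$: a one-parameter domain variation $V(X)=U_0(X-\eps\varphi(X)e_1)$ yields
\[
E(\alpha V,B_2)-E(\alpha U_0,B_2)=\eps\Bigl(\alpha^2\,\tfrac{\pi}{2}-1\Bigr)+O(\eps^2),
\]
and since $\alpha\,U_0$ is a minimizer this forces $\alpha=\sqrt{2/\pi}$ exactly, handling sub- and supersolutions simultaneously. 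The point is that passing to the blow-up limit \emph{first}, via compactness of minimizers, eliminates the need for any slice-by-slice control of $u$ itself and reduces the computation to a clean two-dimensional one.
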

\begin{proof} The fact that $u$ is harmonic in the set where it is positive is already proved in Corollary \ref{coruharm}. We need to verify the free boundary condition. Assume that we touch $F(u)$ at 0 with $B_\delta(\delta e_n)$ from the positive side (or the zero side.) Then by Lemma \ref{oneside} $u$ has an expansion $$u(X)= \alpha U_0(x_n, x_{n+1}) + o(|X|^{1/2}),$$ with $\alpha >0$ in view of the non-degeneracy \eqref{ND}, (see \eqref{U} for the definition of $U_0$). It suffices to prove that $$\alpha =\sqrt{\frac 2 \pi}.$$
The rescaled solutions $$\lambda^{-1/2} u(\lambda X)$$ converge uniformly to $\alpha U_0$ thus by Theorem \ref{compact} and Proposition \ref{prop}, $\alpha U_0$ is a minimizer in $\R^2.$ The following computations are two-dimensional. We perturb $U_0$ as 
$$V(X) = U_0(X-\eps \varphi(X)e_1), \quad \varphi \in C_0^\infty(B_2), \quad \text{$\varphi \equiv 1 $ in $B_{3/2}$}.$$ Then,
\begin{align*}\int_{B_1} |\nabla V|^2 - \int_{B_1} |\nabla U_0|^2 &=\int_{B_1(-\eps e_1)} |\nabla U_0|^2 - \int_{B_1} |\nabla U_0|^2 \\ &= -\eps \int_{\p B_1} |\nabla U_0|^2 \nu \cdot e_1  + O(\eps^2) \\ & = O(\eps^2)\end{align*}
because $|\nabla U_0|$ is constant on $\p B_1.$
Since $V = U_0 - \eps \varphi(U_0)_1 + O(\eps^2),$ where $(U_0)_\tau$ denotes the derivative of $U_0$ in the $\tau$-direction, we have 
\begin{align*}\int_{B_2 \setminus B_1} |\nabla V|^2 - \int_{B_2 \setminus B_1} |\nabla U_0|^2 &= \int_{B_2 \setminus B_1} 2 \nabla U_0 \cdot \nabla (V-U_0) +2 |\nabla(V-U_0)|^2 dX\\ & = 2\eps \int_{\p B_1} (U_0)_\nu(U_0)_1 + O(\eps^2)\\ &=\frac \eps 2 \int_{\p B_1} (\cos \frac \theta 2 )^2 + O(\eps^2) = \eps \frac{\pi}{2} + O(\eps^2).\end{align*}
In the equality above we used that (see formula \eqref{U}) $$(U_0)_1 =(U_0)_\nu = \frac 1 2 r^{-1/2} \cos\frac \theta 2.$$In conclusion, since $$\mathcal H^1(\{V>0\} \cap B_2) - \mathcal H^1(\{U_0 >0\} \cap B_2) =-\eps$$ we obtain that
$$E(\alpha V,B_2) - E(\alpha U_0, B_2) = \eps(\alpha^2 \frac \pi 2 -1) +O(\eps^2)$$ from which we conclude that $$\alpha^2 \frac \pi 2 -1=0 \quad \text{that is $\alpha =\sqrt{\frac 2 \pi}$},$$ as desired.
\end{proof}

\section{Monotonicity Formula}

In this section we prove a Weiss type monotonicity formula (see \cite{W}) for minimizers of the energy functional $E$ and also for  viscosity solutions to the thin one-phase problem  \eqref{FB} which have Lipschitz free boundaries. 
In the case of minimizers this result is also contained in \cite{AP}.

\begin{thm}[Monotonicity formula for minimizers] \label{MF} If $u$ is a minimizer to $E$ in $B_R,$ then $$\Phi_u(r) := r^{-n}E(u,B_r)- \frac 12 r^{-n-1}\int_{\p B_r} u^2, \quad 0<r \leq R,$$ is increasing in $r.$ Moreover $\Phi_u$ is constant if and only if $u$ is homogeneous of degree $1/2$.\end{thm}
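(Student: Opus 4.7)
The plan is the standard Weiss-type trick: differentiate $\Phi_u(r)$ and bound the derivative from below by comparing $u$ to a $1/2$-homogeneous competitor. Writing $u_r(X) := r^{-1/2} u(rX)$, one checks immediately that $\Phi_u(r) = E(u_r, B_1) - \frac{1}{2}\int_{\partial B_1} u_r^2\, d\sigma$, which both explains the scaling exponents in the definition and shows that $1/2$-homogeneous $u$ (for which $u_r \equiv u_1$) automatically give $\Phi_u \equiv \text{const}$.

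First I would compute $\Phi_u'(r)$ pointwise for a.e.\ $r$: the bulk Dirichlet term contributes $r^{-n}\int_{\partial B_r}|\nabla u|^2\, d\sigma$, the measure term contributes $r^{-n}\mathcal{H}^{n-1}(\{u>0\}\cap \partial\mathcal{B}_r)$ via the coarea formula, and the boundary term is handled by pulling back to the unit sphere, producing $-r^{-n-1}\int_{\partial B_r} u\, u_\nu\, d\sigma + \frac{1}{2} r^{-n-2}\int_{\partial B_r} u^2\, d\sigma$. Next, following Weiss, I introduce the competitor $v(\rho\omega) := (\rho/r)^{1/2} u(r\omega)$ for $\rho \le r$, extended by $u$ outside $B_r$. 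This $v$ matches $u$ on $\partial B_r$ in trace, is even in $x_{n+1}$, and lies in $H^1$ since $|\nabla v|^2 \sim \rho^{-1}$ is integrable in dimension $n+1 \ge 2$. A spherical-coordinates computation based on $|\nabla v|^2 = (\partial_\rho v)^2 + \rho^{-2}|\nabla_\omega v|^2$ yields the clean identity
$$E(v, B_r) \;=\; \frac{1}{4nr}\int_{\partial B_r} u^2 \, d\sigma \;+\; \frac{r}{n}\int_{\partial B_r} |\nabla_T u|^2\, d\sigma \;+\; \frac{r}{n}\,\mathcal{H}^{n-1}\bigl(\{u>0\}\cap \partial\mathcal{B}_r\bigr).$$

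Plugging the minimality inequality $E(u, B_r) \le E(v, B_r)$ into the expression for $\Phi_u'(r)$ and splitting $|\nabla u|^2 = u_\nu^2 + |\nabla_T u|^2$ on $\partial B_r$, the $\mathcal{H}^{n-1}$ terms and the $|\nabla_T u|^2$ terms cancel exactly, and what remains collapses to a perfect square:
$$\Phi_u'(r) \;\ge\; r^{-n}\int_{\partial B_r}\left(u_\nu - \frac{u}{2r}\right)^{\!2} d\sigma \;\ge\; 0.$$
For the rigidity, if $\Phi_u$ is constant on an interval then $u_\nu = u/(2r)$ on $\partial B_r$ for a.e.\ $r$; integrating along rays forces $u(\rho\omega) = \rho^{1/2} h(\omega)$, so $u$ is $1/2$-homogeneous, and the converse is immediate from the scaling identity above. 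The one genuine technical obstacle is justifying the derivative formula and the $H^1$ admissibility of $v$ across the free boundary; this is handled using the optimal $C^{1/2}$ bound (Corollary \ref{optreg}), the integration by parts identity in Remark \ref{strong}, and the fact that $\mathcal{H}^n(F(u)) = 0$ proved in Section 3. Continuity of $\Phi_u$ in $r$ then upgrades a.e.\ monotonicity to monotonicity everywhere.
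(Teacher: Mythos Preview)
Your proposal is correct and follows the same Weiss-type strategy as the paper: compare $u$ to a $1/2$-homogeneous competitor matching $u$ on $\partial B_r$, and read off the perfect-square lower bound $\Phi_u'(r) \ge r^{-n}\int_{\partial B_r}(u_\nu - u/(2r))^2\,d\sigma$. The only difference is a minor implementation choice --- you take the homogeneous extension into the full ball $B_r$ and compute $E(v,B_r)$ exactly, whereas the paper extends homogeneously only in a thin annulus $B_1\setminus B_{1-\eps}$ and lets $\eps\to 0$; both routes yield the identical differential inequality (one small correction: to pass from $\Phi_u'\ge 0$ a.e.\ to genuine monotonicity you need absolute continuity of $\Phi_u$, not just continuity).
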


Before the proof, we remark that the rescaling $$u_\lambda(X) := \lambda^{-1/2} u (\lambda X)$$ satisfies \be\label{rescaledmon}\Phi_{u_\lambda}(r) = \Phi_u(\lambda r).\ee
\begin{proof}
For a.e. $r$ we have \begin{align}\label{regular1}
& \frac{d}{dr}\left(\int_{B_r} |\nabla u|^2 dX\right)= \int_{\p B_r} |\nabla u|^2 d\sigma,\\ \label{regular2}
&\frac{d}{dr}\left(\mathcal{H}^n(\{u>0\} \cap \mathcal B_r)\right)= \mathcal{H}^{n-1}(\{u>0\} \cap \p \mathcal B_r),\\ \label{regular3}
&\frac{d}{dr}\left(r^{-n-1} \int_{\p B_r} u^2 d\sigma\right) = r^{-n-2}\int_{\p B_r} (2r uu_{\nu} - u^2) d\sigma,\end{align} where in \eqref{regular3} we used that $u^2$ is a Lipschitz function. This follows from the fact that $u(X) \leq C dist(X,\{u=0\})^{1/2}$ (see Corollary \ref{coruharm}.) 

Assume that the equalities above are satisfied at $r=1$. Define,
$$v_\eps(X) = \begin{cases}(1-\eps)^{1/2} u(\frac{X}{1-\eps}), \quad \text{if $|X| \leq 1-\eps$}, \\ \ \\ |X|^{1/2} u\left(\frac{X}{|X|}\right), \quad \text{if $1-\eps < |X| \leq 1.$}\end{cases}$$ 

We have,
\begin{align*}E(v_\eps, B_1) & =  \int_{B_{1-\eps}} (1-\eps)^{-1} |\nabla u ((1-\eps)^{-1}X)|^2 dX + (1-\eps)^{n}(\mathcal{H}^n(\{u>0\} \cap \mathcal B_1))\\ &+ \eps \int_{\p B_1}\left( \frac 1 4 u^2 + u_\tau^2\right)d\sigma + \eps \mathcal{H}^{n-1}(\{u>0\} \cap \p \mathcal B_1)  + o(\eps),\end{align*} with the sum of the first two terms equaling $(1-\eps)^{n} E(u,B_1).$ In the equality above, $u_\tau$ denotes the tangential gradient of $u$ on $\p B_1.$
Also,
\begin{align*}E(u,B_1) & = \int_{B_{1-\eps}} |\nabla u|^2 dX + \mathcal{H}^n(\{u>0\} \cap \mathcal B_{1-\eps}) \\ & + \eps \left(\int_{\p B_1} |\nabla u|^2 d\sigma +  \mathcal{H}^{n-1}(\{u>0\} \cap \p \mathcal B_1)\right) + o(\eps),\end{align*} with $|\nabla u|^2=u_{\nu}^2 + u_\tau^2.$
The inequality $$E(u,B_1) \leq E(v_\eps, B_1)$$ then implies
$$o(\eps) + \eps \int_{\p B_1}\left( u_\nu^2 - \frac 1 4 u^2 \right)d\sigma + E(u, B_{1-\eps}) \leq (1-\eps)^{n} E(u,B_1).$$ Hence, dividing by $(1-\eps)^n$ and letting $\eps \to 0$ we obtain
$$\frac{d}{dr} (r^{-n} E(u,B_r)) |_{r=1} \geq \int_{\p B_1} \left(u_\nu^2 - \frac 1 4 u^2\right) d\sigma.$$ Using \eqref{regular3}, this shows that  $$\frac{d}{dr} \Phi_u(r)|_{r=1} \geq \int_{\p B_1} (u_\nu - \frac 1 2 u)^2 d\sigma \geq 0.$$ Thus,
$$\frac{d}{dr} \Phi_u(r) \geq 0, \quad \text{a.e. $r$}$$ and the conclusion follows since $\Phi_u$ is absolutely continuous  in $r$. 

From above we see that $\Phi_u$ constant if and only if  $$u_\nu = \frac{1}{2|X|} u, \quad \text{a.e.}$$ which implies that $u$ is homogeneous of degree $1/2.$
\end{proof}

\begin{rem} We used the minimality only up to first order $\eps$ which suggests that the formula remains valid for critical points of $E.$ Indeed, we only need to require that $u$ is  critical for $E$ under domain variations  (see \cite{AP,W}).\end{rem}

Next we show that the Monotonicity formula is valid also for viscosity solutions with Lipschitz free boundary. The proof is technical since we need to justify certain integration by parts.

\begin{thm}[Monotonicity formula for viscosity solutions] \label{MFvisc}Let $u$ be a viscosity solution to $$\begin{cases}\Delta u = 0 \quad \text{in $B_R^+(u)$}\\ \ \\ \dfrac{\p u}{\p U_0} = \sqrt{\dfrac 2 \pi}, \quad \text{on $F(u),$}\end{cases}$$ with $F(u)$ a Lipschitz graph. Then $$\Phi_u(r) := r^{-n}E(u,B_r)- \frac 12 r^{-n-1}\int_{\p B_r} u^2, \quad 0<r \leq R$$ is increasing in $r.$ Moreover $\Phi_u$ is constant if and only if $u$ is homogeneous of degree $1/2$.\end{thm}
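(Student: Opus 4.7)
By the scaling relation $\Phi_{u_\lambda}(r) = \Phi_u(\lambda r)$ (analogous to \eqref{rescaledmon}), it suffices to show $\Phi_u'(1) \geq 0$. Differentiating the two pieces of $\Phi_u$ at $r = 1$ (using that $u^2$ is Lipschitz by Lemma \ref{optimal}, so $\frac{d}{dr}\int_{\p B_r}u^2 = (n/r)\int_{\p B_r}u^2 + 2\int_{\p B_r}uu_\nu$) gives
\[
\Phi_u'(1) = -nE(u,B_1) + \int_{\p B_1}\abs{\nabla u}^2 + \mathcal H^{n-1}(\{u>0\}\cap\p\mathcal B_1) + \tfrac{1}{2}\int_{\p B_1}u^2 - \int_{\p B_1}uu_\nu.
\]
I would then combine this with two identities: (a) Remark \ref{strong}'s $\int_{B_1}\abs{\nabla u}^2 = \int_{\p B_1}uu_\nu$ (valid because $F(u)$ Lipschitz implies $\mathcal H^n(F(u))=0$), and (b) the Pohozaev-type identity
\[
(n-1)\int_{B_1}\abs{\nabla u}^2 + n\,\mathcal H^n(\mathcal B_1^+(u)) = \int_{\p B_1}[\abs{\nabla u}^2 - 2u_\nu^2]\,d\sigma + \mathcal H^{n-1}(\{u>0\}\cap\p\mathcal B_1). \qquad (\star)
\]
A short algebraic manipulation then yields $\Phi_u'(1) = 2\int_{\p B_1}(u_\nu - u/2)^2 \geq 0$. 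Equality throughout forces $u_\nu = u/(2|X|)$ on every sphere, which by Euler's identity is equivalent to $u$ being homogeneous of degree $1/2$.

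The main task is to establish $(\star)$, which plays the role of a ``first variation of $E$ under a radial dilation''. Since $u$ is harmonic in $B_1^+(u)$, the pointwise identity $\text{div}(\abs{\nabla u}^2 X - 2(X\cdot\nabla u)\nabla u) = (n-1)\abs{\nabla u}^2$ holds there. Integrating over $B_1^+(u) \setminus T_\delta$, with $T_\delta$ a tubular $\delta$-neighborhood of $F(u)$, and applying the divergence theorem produces: the target $\int_{\p B_1}[\abs{\nabla u}^2 - 2u_\nu^2]$ on $\p B_1$ (where $X\cdot\nu = 1$, $X\cdot\nabla u = u_\nu$); zero on the thin zero set $\{u(x,0)=0\}\cap\mathcal B_1 \setminus T_\delta$ (since $X\cdot\nu = 0$ and $u_\nu = 0$ there by evenness in $x_{n+1}$); and a nontrivial correction on $\p T_\delta$. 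Separately, the elementary $n$-dimensional divergence identity $\text{div}_x(x) = n$ on $\mathcal B_1^+ \setminus T_\delta^{\text{thin}}$ yields $\mathcal H^{n-1}(\{u>0\}\cap\p\mathcal B_1)$ plus a thin-side correction from the cross-section of $T_\delta$ around $F(u)$. Adding the bulk identity and $n$ times the thin identity gives $(\star)$ \emph{plus} a residual term proportional to $\int_{F(u)\cap B_1}y\cdot\nu_F(y)\,d\mathcal H^{n-1}$, where $\nu_F$ is the unit normal to $F(u)$ on the thin plane.

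The crux is that this residual vanishes exactly because of the free boundary condition. Using Remark \ref{rem0}'s expansion $u = \sqrt{2/\pi}\,U_0(t,s) + o(\rho^{1/2})$ and $\nabla u = \sqrt{2/\pi}\,\nabla U_0 + o(\rho^{-1/2})$ at regular points of $F(u)$ (with $(t,s)$ transverse coordinates and $\rho^2 = t^2+s^2$), together with Lemma \ref{utau} to control tangential derivatives, a direct computation on $\p T_\delta$ parameterized as $(y',\delta\cos\theta,\delta\sin\theta)$ shows the bulk Pohozaev integrand equals $\frac{\alpha^2}{4}\cos\theta + \frac{\alpha^2}{4\delta}(y\cdot\nu_F(y)) + o(1)$ with $\alpha^2 = 2/\pi$. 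The $\theta$-integral of the first term vanishes, while the second, against area element $\delta\,d\theta\,d\mathcal H^{n-1}(y')$, contributes $\frac{\pi\alpha^2}{2}\int_{F(u)}y\cdot\nu_F$. The analogous thin-side correction contributes $-\int_{F(u)}y\cdot\nu_F$ (with reversed sign due to the outward normal from $\mathcal B_1^+$), so the residual reduces to $\bigl(\frac{\pi\alpha^2}{2} - 1\bigr)\int_{F(u)}y\cdot\nu_F$, which vanishes exactly when $\alpha = \sqrt{2/\pi}$ --- precisely the free boundary condition. The main obstacle is that Remark \ref{rem0}'s expansion is derived under a $C^1$ assumption on $F(u)$, whereas the hypothesis only gives $F(u)$ Lipschitz: by Rademacher's theorem, $F(u)$ is differentiable $\mathcal H^{n-1}$-a.e., and at such points a blow-up argument using Proposition \ref{compactness} and the flatness Theorem \ref{mainT} should yield the expansion, while the $\mathcal H^{n-1}$-null exceptional set contributes negligibly thanks to the codimension-$2$ local integrability of $\abs{\nabla u}^2$ near $F(u)$.
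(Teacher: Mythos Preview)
Your proposal is correct and follows essentially the same route as the paper: compute $\Phi_u'(1)$, reduce to the Pohozaev-type identity $(\star)$ (this is the paper's \eqref{wish}), and derive $(\star)$ by integrating $\text{div}(|\nabla u|^2 X - 2(X\cdot\nabla u)\nabla u)$ over $B_1^+(u)\setminus T_\delta$ and passing $\delta\to 0$. The paper organizes the last step slightly differently: it first shows the $\p T_\eps$ contribution equals exactly $\int_{\Gamma\cap B_1} y\cdot\nu_\Gamma\,d\mathcal H^{n-1}$ (with the free boundary constant already producing the coefficient $1$), and only afterwards converts this via the divergence theorem in $\R^n$ into $-n\,\mathcal H^n(\mathcal B_1^+(u)) + \mathcal H^{n-1}(\{u>0\}\cap\p\mathcal B_1)$; your ``bulk plus thin'' bookkeeping with residual $(\tfrac{\pi\alpha^2}{2}-1)\int y\cdot\nu_F$ is the same computation rearranged, and nicely isolates where $\alpha=\sqrt{2/\pi}$ enters.

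On the one point you flag as an obstacle: your Rademacher-plus-flatness argument is exactly what the paper does. At $\mathcal H^{n-1}$-a.e.\ point of the Lipschitz graph the tangent plane exists, so $F(u)$ is $\bar\eps$-flat in some small ball, and Theorem~\ref{mainT} upgrades this to $C^{2,\alpha}$ locally; then Remark~\ref{rem0} gives the gradient expansion you need (Lemma~\ref{utau} is not required here---Remark~\ref{rem0} already controls the full gradient to $o(\rho^{-1/2})$). For the residual $\mathcal H^{n-1}$-null singular set the paper makes your ``negligible contribution'' precise via the uniform bound $\bigl|\int_{\p T_\eps\cap B_r(X_0)}(\ldots)\bigr| \le C r^{n-1}$ (from $|\nabla u|^2\le C\eps^{-1}$ on $\p T_\eps$ together with $\mathcal H^n(\p T_\eps\cap B_r)\le C r^{n-1}\eps$ for Lipschitz $\Gamma$), followed by a covering argument; you should also note, as the paper does, that $\mathcal H^{n-1}(F(u)\cap\p\mathcal B_r)=0$ for a.e.\ $r$ so that the $\p B_1$ and $\p T_\eps$ pieces of $\p\Omega_\eps$ separate cleanly.
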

\begin{proof} First we remark that  since $\{u=0\}$ is a Caccioppoli set in $\R^n$, $$\mathcal{H}^{n-1}(F(u) \cap \p \mathcal B_r)=0 \quad \text{for a.e. $r$}.$$
We assume that $r=1$ is a regular for value for $\Phi_u$ in the sense of \eqref{regular1}-\eqref{regular3}  and also that the equality above holds i.e. $\mathcal{H}^{n-1}(F(u) \cap \p \mathcal B_1)=0.$ We compute
\begin{align*}\Phi'_u(1) & = \int_{\p B_1} |\nabla u|^2 d\sigma + \mathcal{H}^{n-1}(\{u>0\} \cap \p \mathcal B_1) - n \int_{B_1} |\nabla u|^2 \\ & - n \mathcal{H}^n(\{u>0\} \cap \mathcal B_1) + \int_{\p B_1} - u u_\nu + \frac 1 2 u^2.\end{align*}

Next we want to prove that 
\begin{align}\label{wish} (n-1) \int_{B_1} |\nabla u|^2 dX &= \int_{\p B_1} (|\nabla u|^2 - 2 u_\nu^2)d\sigma \\ \nonumber &-n \mathcal{H}^{n}(\{u>0\} \cap \mathcal B_1) + \mathcal{H}^{n-1}(\{u>0\} \cap \p \mathcal B_1).\end{align}
Using this identity together with the identity (see Remark \ref{strong})
\be\int_{B_1} |\nabla u|^2 dX = \int_{\p B_1} u u_\nu\;d\sigma, \ee
in the formula above for
$\Phi'_u(1)$, we obtain that $$\Phi'_u(1) =2 \int_{\p B_1} (u_\nu - \frac 1 2 u)^2 d\sigma \geq 0.$$ Analogously for a.e. $r$ we get 
 $$\Phi'_u(r) =2 \int_{\p B_r} (u_\nu - \frac 1 2 u)^2 d\sigma \geq 0,$$
from which our conclusion follows.

Let $\Gamma:= F(u).$ To prove \eqref{wish}, we need to show that 
\be\label{firststep} (n-1) \int_{B_1} |\nabla u|^2 dX = \int_{\p B_1} (|\nabla u|^2 - 2 u_\nu^2)d\sigma + \int_{\Gamma \cap B_1} y \cdot \nu_\Gamma d \mathcal{H}^{n-1},\ee 
 with $\nu_\Gamma$ the normal to $\Gamma$ in $\R^n$ pointing toward the positive phase. 
Then, by the divergence theorem,
$$\int _{\Gamma \cap B_1} y \cdot \nu_\Gamma d\mathcal{H}^{n-1} = -n \mathcal{H}^{n}(\{u>0\} \cap \mathcal B_1) + \mathcal{H}^{n-1}(\{u>0\} \cap \p \mathcal B_1).$$ 
This combined with \eqref{firststep} gives us \eqref{wish}.

To prove \eqref{firststep}, let us denote by $$T_\eps := \{X \in \R^{n+1} | dist(X,\Gamma)\leq \eps\}, \quad \Omega_\eps := B_1^+(u) \setminus T_\eps.$$ Notice that $\Omega_\eps$ is a Caccioppoli set and $u$ is a smooth function outside $T_\eps \cup \{u=0\}$. Thus we can use integration by parts. Precisely, 
\be\label{M1} \int_{\Omega_\eps} \nabla u \cdot \nabla(\nabla u \cdot X)\;dX = \int_{\p^* \Omega_\eps} u_\nu \nabla u \cdot X \;d\sigma, \ee where $\p^*\Omega_\eps$ denotes the reduced boundary of $\Omega_\eps$ and $\nu$ denotes the exterior normal to $\p^* \Omega_\eps.$

On the other hand, again using integration by parts we get 

\begin{eqnarray}\label{M2}& \int_{\Omega_\eps} \nabla u \cdot \nabla(\nabla u \cdot X) \;dX = \int_{\Omega_\eps} (u_i u_{ij} x_j + u_i^2)\;dX \\ & \nonumber \  \\ \nonumber &=\int_{\Omega_\eps} \left(-\frac{n+1}{2} |\nabla u|^2 + |\nabla u|^2\right) \; dX + \int_{\p^* \Omega_\eps} \frac 1 2 |\nabla u|^2 X\cdot \nu \; d\sigma.\end{eqnarray}

From \eqref{M1}-\eqref{M2} we find,
\be\label{star}(n-1) \int_{\Omega_\eps} |\nabla u|^2 dX = \int_{\p^* \Omega_\eps} (|\nabla u|^2 X \cdot \nu - 2 u_{\nu} \nabla u \cdot X)\; d\sigma.\ee 

We need to show that \eqref{firststep} follows from the equality above by letting $\eps \to 0.$
We remark that since $u(X) \leq C\; dist(X,F(u))^{1/2}$ (see Lemma \ref{optimal})
$$|\nabla u|^2 \leq C \eps^{-1} \quad \text{on $\p T_\eps$}$$ and since $\Gamma$ is Lipschitz
$$\mathcal{H}^n(\p T_\eps \cap B_r(X_0)) \leq C r^{n-1} \eps, \quad X_0 \in \Gamma.$$
Combining these two inequalities we obtain
\be \label{bound}\left|\int_{\p T_\eps \cap B_r} (|\nabla u|^2 X \cdot \nu - 2 u_\nu \nabla u \cdot X)\; d\sigma\right| \leq C r^{n-1}. \ee Next we claim that  if $\Gamma$ is a $C^{2,\alpha}$ surface in a neighborhood of $X_0 \in \Gamma$ then for $r$ small (depending on the $C^{2,\alpha}$ norm)  we have 
\be\label{limit} \lim_{\eps \to 0} \int_{\p T_\eps \cap B_r(X_0)} (|\nabla u|^2 X \cdot \nu - 2 u_\nu \nabla u \cdot X)\; d\sigma = \int_{\Gamma \cap B_r(X_0)} y \cdot \nu_{\Gamma} d \mathcal{H}^{n-1}\ee with $\nu$ the interior normal direction to $\p T_\eps$ and $\nu_\Gamma$ the normal to $\Gamma$ in $\R^n$ pointing toward the positive phase. To obtain \eqref{limit} we parametrize $T_\eps$ by the map: $$(y, \theta) \to X= y+\eps (\nu_\Gamma \cos \theta + e_{n+1}\sin \theta), \quad (y,\theta) \in \Gamma \times [-\pi, \pi].$$ Then, on $\p T_\eps$ \begin{align*} d \sigma &=  (1+O(\eps)) \;\eps\; dy\;d\theta,\\  X&= y + O(\eps),\\  \nabla u(X) &= \sqrt{\frac 2 \pi}(\nu_\Gamma (U_0)_1 + e_{n+1}(U_0)_2 )+ o(\eps^{-1/2}),\end{align*} where in the last equality (which follows from Remark \ref{rem0}) the derivatives of $U_0$ are evaluated at $\eps \omega$ with $\omega :=(\cos \theta, \sin \theta).$

Using these identities, for a fixed $y \in \Gamma$ we compute, 

\begin{align*}
& \eps  \int_{-\pi}^\pi (|\nabla u|^2 X \cdot \nu - 2 u_\nu \nabla u \cdot X)\; d\theta \\ &= \eps   \int_{-\pi}^\pi (|\nabla u|^2 y \cdot \nu - 2 u_\nu \nabla u \cdot y)\; d\theta +O(\eps)\\
&= \eps \frac 2 \pi   \int_{-\pi}^\pi (|\nabla U_0|^2 \cos \theta y \cdot \nu_\Gamma + 2 (U_0)_\omega(U_0)_1 y \cdot \nu_\Gamma)\; d\theta +O(\eps)\\ &= y \cdot \nu_\Gamma + O(\eps)
\end{align*}
where again the derivatives of $U_0$ are evaluated at $\eps \omega,$ and in the last equality we used that (see the proof of Proposition \ref{min-is-visc})
$$\int_{-\pi}^{\pi} (|\nabla U_0|^2\cos\theta + 2(U_0)_\omega (U_0)_1)\; d\theta = \eps^{-1}\frac \pi 2 .$$
In conclusion, 
$$\eps  \int_{-\pi}^\pi (|\nabla u|^2 X \cdot \nu - 2 u_\nu \nabla u \cdot X)\; d\theta = y \cdot \nu_\Gamma + O(\eps)
$$ and integrating this identity over $\Gamma$ we obtain \eqref{limit}.

From our flatness Theorem \ref{mainT} we know that $\Gamma$ is $C^{2,\alpha}$ except on a closed set $\Sigma$ of $\mathcal H^{n-1}$ measure zero and also recall that $\mathcal H^{n-1}(\Gamma \cap \p \mathcal B_1)=0$. We use a standard covering argument for $\Sigma \cup (\Gamma \cap \p \mathcal B_1)$ with balls of small radius on which we apply the inequality 
\eqref{bound}.  On the remaining part of $\Gamma$ we use  \eqref{limit} and  obtain the desired conclusion $$ (n-1) \int_{B_1} |\nabla u|^2 dX = \int_{\p B_1} (|\nabla u|^2 - 2 u_\nu^2)d\sigma + \int_{\Gamma \cap B_1} y \cdot \nu_\Gamma d \mathcal{H}^{n-1},$$ by passing to the limit as $\eps \to 0$ in \eqref{star}. 
\end{proof}

\begin{rem}\label{rem1} If $u_k$ are minimizers which converges uniformly to $u$ on compact sets, then it follows from the proof of the compactness Theorem \ref{compact} that $$\Phi_{u_k} (r) \to \Phi_u(r).$$ The result is true also if the $u_k$ are viscosity solutions with Lipschitz free boundaries with uniform Lipschitz bound.
\end{rem}

\begin{rem}\label{rem2} If $u$ satisfies either the assumptions of Theorem \ref{MF} or Theorem \ref{MFvisc}  then $\Phi_u(r)$ is bounded below as $r \to 0$. Indeed, by scaling we only need to check that $\Phi_u(1)$ is bounded which follows from the formula below (see Remark \ref{strong})
$$\Phi_u(1) = \int_{\p B_1}(u u_\nu - \frac 1 2 u^2) d\sigma + \mathcal{H}^n(\{u>0\} \cap \mathcal B_1).$$ This means that $$\Phi_u(0^+) = \lim_{r \to 0^+} \Phi_u(r)= \lim_{r \to 0^+} r^{-n} \mathcal H^n(\{u>0\} \cap  \mathcal B_r) \quad \text{exists}$$ and any blow-up sequence $u_\lambda$ converges uniformly on compact sets (up to a subsequence) to a homogeneous of degree $1/2$ solution $U$ (see \eqref{rescaledmon}). 
\end{rem}

\begin{defn}\label{def-cones} A minimizer $U$ of $E$ which is homogeneous of degree 1/2 is called a {\it minimal cone}. Analogously a viscosity solution to \eqref{FB} which is homogeneous of degree 1/2 and has Lipschitz free boundary is called a {\it Lipschitz viscosity cone.} \end{defn}

Let $U$ be a  (minimal or viscosity)  cone. We denote by $\Phi_{U}$ its energy (which is a constant for all $r$) \be\label{omegan}\Phi_{U} = \mathcal{H}^n(\{U>0\} \cap \mathcal B_1) \in (0,\omega_n),\ee
 where $\omega_n$ denotes the volume of the $n$-dimensional unit ball.
 
We say that a cone $U$ is trivial, if it coincides (up to a rotation) with the cone $U_0(X) = U_0(x_n,x_{n+1})$ (defined in \eqref{U}), and therefore its free boundary is a hyperplane. The energy of the trivial cone is $\omega_n/2$.

\section{Minimal Cones}

This section is devoted to the study of minimal cones. First we prove an ``energy gap" result in the spirit of the analogue for minimal surfaces. We then show that in dimension $n=2$ the only minimal cone is the trivial cone $U_0$ (see \eqref{U}). Finally, by a standard dimension reduction argument we prove our main Theorem \ref{thm1}.

\begin{lem}\label{uniformcones} Minimal cones are uniformly $C^{1/2}.$
\end{lem}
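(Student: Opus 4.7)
The plan is to reduce the claim to a universal bound of the form $U(e_{n+1}) \leq C$ for every nontrivial minimal cone $U$ (which, being homogeneous of degree $1/2$ with vertex at the origin, may be assumed to satisfy $0 \in F(U)$). Once such a bound is in hand, applying the optimal regularity estimate of Corollary \ref{optreg} on $B_2$ immediately gives $\|U\|_{C^{1/2}(B_1)} \leq C(1+U(e_{n+1}))$, uniformly in $U$.

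The first step is to upgrade the flat optimal regularity bound from Corollary \ref{coruharm} to a global trace growth estimate. Since $0 \in F(U)$ we have $U(x,0) \leq C|x|^{1/2}$ on $\mathcal{B}_1$ with $C$ universal, and combining this with the $1/2$-homogeneity of $U$ yields
$$
U(y,0) \leq C|y|^{1/2} \quad \text{for all } y \in \R^n.
$$

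The key step is then to bound $U$ at the interior point $e_{n+1}$ using only its trace on $\{x_{n+1}=0\}$. Since $U \ge 0$ is harmonic in the open upper half-space $\{x_{n+1}>0\} \subset \Omega^+(U)$, the classical Herglotz-type representation for nonnegative harmonic functions on $\R^{n+1}_+$ gives
$$
U(x, x_{n+1}) = a\, x_{n+1} + c_n \int_{\R^n} \frac{x_{n+1}\, U(y,0)}{\bigl(|y-x|^2 + x_{n+1}^2\bigr)^{(n+1)/2}}\, dy, \qquad a \geq 0,
$$
with the integral well-defined thanks to the $|y|^{1/2}$ growth of the trace. A change of variables $y = Rz$ shows that the Poisson integral at $X = Re_{n+1}$ is $R^{1/2}$-homogeneous, while $U(Re_{n+1}) = R^{1/2} U(e_{n+1})$ by homogeneity of $U$. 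Matching the two expressions forces $a(R - R^{1/2}) = 0$ for all $R>0$, hence $a = 0$. Evaluating the resulting formula at $X = e_{n+1}$ and plugging in the trace bound,
$$
U(e_{n+1}) \leq c_n C \int_{\R^n} \frac{|y|^{1/2}}{(|y|^2+1)^{(n+1)/2}} \, dy,
$$
which is a finite universal constant (the integrand decays like $|y|^{-n-1/2}$ at infinity).

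The main obstacle is invoking the half-space representation and eliminating the linear defect term $a\, x_{n+1}$. Both are standard once the $|y|^{1/2}$ trace growth is available, and crucially it is the sublinear $1/2$-homogeneity of the cone---slower than the linear growth on the axis that any nonzero $a$ would produce---that lets us rule out the $a$ term. I don't see a significantly simpler route avoiding some form of Phragm\'en--Lindel\"of or representation theorem, since we must control $U$ at an interior point from information on $\{x_{n+1}=0\}$ alone.
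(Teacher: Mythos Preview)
Your proof is correct but takes a genuinely different route from the paper. The paper does not attempt to bound $U(e_{n+1})$ at all. Instead it goes back to the proof of Corollary~\ref{optreg} to extract the refined estimate
\[
\frac{|U(X)-U(Y)|}{|X-Y|^{1/2}} \le C\bigl(1 + U(e_{n+1})\,|X-Y|^{1/2}\bigr), \qquad X,Y \in B_1,
\]
(the point being that the part of $U$ carrying the $U(e_{n+1})$ dependence is in fact Lipschitz, not merely $C^{1/2}$), and then applies this estimate to the rescaling $U_R(\tilde X)=R^{-1/2}U(R\tilde X)$. Since $U_R$ is again a minimal cone, the same inequality holds for $U_R$ on $B_1$, which after undoing the scaling becomes the same inequality for $U$ on $B_R$ with coefficient $R^{-1}U(Re_{n+1})=R^{-1/2}U(e_{n+1})\to 0$. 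Letting $R\to\infty$ kills the second term and gives the uniform bound.

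Both arguments ultimately exploit the $1/2$-homogeneity of $U$ in an essential way: you use it to globalize the trace bound and to rule out the linear defect $a\,x_{n+1}$ in the Herglotz representation, while the paper uses it to make the $U(e_{n+1})$-dependent coefficient disappear under rescaling. The paper's argument is slightly more self-contained, needing no representation formula and only the decomposition already present in the proof of Corollary~\ref{optreg}; your argument buys the sharper intermediate fact $U(e_{n+1})\le C$ universal, at the cost of invoking the half-space Poisson representation for nonnegative harmonic functions.
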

\begin{proof}Let $U$ be a minimal cone. From the proof of the $C^{1/2}$ bound (see Corollary \ref{optreg}) we obtain 
$$\frac{|U(X) - U(Y)|}{|X-Y|^{1/2}} \leq C(1+ U(e_{n+1}) |X-Y|^{1/2}), \quad X,Y \in B_{1},$$ with $C$ universal. Writing this estimate for the rescaling $U_R$ $$U_R(\tilde X) = R^{-1/2} U(R \tilde X), \quad X=R\tilde X, \tilde X \in B_2,$$ we obtain
$$\frac{|U(X) - U(Y)|}{|X-Y|^{1/2}} \leq C(1+ \frac 1 R U(R  e_{n+1}) |X-Y|^{1/2}).$$ Since $U$ is homogeneous of degree $1/2$, $$\frac 1 R U(R  e_{n+1}) \to 0, \quad \text{as $R \to \infty,$}$$ and we obtain the desired bound.
\end{proof}

\begin{defn}
Given a minimizer $u$ for $E$ in $\Omega \subset \R^{n+1}$, we say that a point $X \in F(u)$ is a {\it regular point} if there exists a blow-up sequence of $u$ centered at $X$ which converges to the trivial cone. The points of $F(u)$ which are not regular points, will be called {\it singular point } and the set of all singular points of $F(u)$ is denoted by $\Sigma_u.$ \end{defn}

We notice that in view of our flatness Theorem \ref{mainT}, $F(u)$ is a $C^{2,\alpha}$ surface in a neighborhood of any regular point, and moreover $\Sigma_u$ is a closed set in $\Omega.$ 

\begin{prop}[Energy Gap]\label{EE} Let $U$ be a non-trivial minimal cone. Then, there exists a $\delta>0$ universal such that $$\Phi_U \geq \frac{\omega_n}{2} + \delta.$$\end{prop}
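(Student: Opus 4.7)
I argue by contradiction and compactness. Assume no such $\delta$ exists: there is a sequence of non-trivial minimal cones $U_k$ with $\Phi_{U_k}\to \omega_n/2$. By the uniform $C^{1/2}$ estimate of Lemma \ref{uniformcones} and Arzel\`a-Ascoli, a subsequence converges locally uniformly to some $U_*$. Theorem \ref{compact} ensures $U_*$ is a minimizer, and since $1/2$-homogeneity is preserved under locally uniform limits, $U_*$ is itself a minimal cone. Remark \ref{rem1} then gives $\Phi_{U_*}=\omega_n/2$.

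Granting the identification $U_*=U_0$ (up to rotation), I would conclude as follows. By the Hausdorff convergence of the zero sets from Theorem \ref{compact}, for every $\bar\eps>0$ and all sufficiently large $k$,
$$\{x_n\le -\bar\eps\}\cap \mathcal{B}_1\subset \{U_k(x,0)=0\}\subset \{x_n\le \bar\eps\}\cap \mathcal{B}_1.$$
Since minimizers are viscosity solutions to \eqref{FB} after multiplication by the universal constant in Proposition \ref{min-is-visc}, the flatness Theorem \ref{mainT} applies and $F(U_k)\cap \mathcal{B}_{1/2}$ is a $C^{2,\alpha}$ graph over $\{x_n=0\}$. But $F(U_k)$ is a cone with vertex at $0$, and a $1$-homogeneous $C^1$ graph through the origin is necessarily linear, so $F(U_k)$ is a hyperplane. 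A minimal cone with hyperplane free boundary is uniquely determined by the Euler-Lagrange condition and must be a multiple of $U_0$, contradicting the non-triviality of $U_k$.

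\textbf{The main obstacle} is identifying $U_*$ with $U_0$. For this I would proceed by dimension reduction. Pick $Y\in F(U_*)\cap \partial \mathcal{B}_{1/2}$, which exists because $F(U_*)$ is a non-trivial cone. The $1/2$-homogeneity of $U_*$ places the whole ray $\{tY:t>0\}$ inside $F(U_*)$, so any blow-up of $U_*$ at $Y$ is translation-invariant in the direction $Y/|Y|$. Applying Theorem \ref{MF} to the translate $U_*(Y+\cdot)$, the corresponding monotonicity quantity centered at $Y$ is non-decreasing in $r$, and a blow-down analysis exploiting the $1/2$-homogeneity of $U_*$ at $0$ yields $\omega_n/2$ as the limit at $r=\infty$. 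Hence any blow-up $V$ of $U_*$ at $Y$ is a minimal cone with $\Phi_V\le \omega_n/2$, and by Proposition \ref{prop} corresponds to a minimal cone in $\R^n$ of the same Weiss energy. Iterating this reduction at most $n-1$ times, I arrive at a minimal cone in $\R^{1+1}$; a direct ODE computation for $1/2$-homogeneous non-negative even harmonic functions in the slit plane (solving $\phi''+\phi/4=0$ on $(-\pi,\pi)$ with the required sign and symmetry) shows such a cone must be a multiple of $U_0$. Tracing back the dimension reduction together with the equality case in Theorem \ref{MF} forces $U_*$ to be trivial, closing the argument.
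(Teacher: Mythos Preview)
Your overall architecture matches the paper's: compactness to a limiting cone $U_*$ with $\Phi_{U_*}=\omega_n/2$, then flatness to force the $U_k$ to be trivial. The difference lies in how you show that a minimal cone with $\Phi_U=\omega_n/2$ must be trivial.

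The paper does this in one stroke, without induction. It picks $X_0\in F(U)\setminus\{0\}$ where $F(U)$ can be touched from the positive side by a ball; by Lemma~\ref{oneside} the blow-up of $U$ at $X_0$ is exactly a rotation of the trivial cone, so $\Phi_U(0^+,X_0)=\omega_n/2$. Since $\Phi_U(r,X_0)\to\Phi_U\le \omega_n/2$ as $r\to\infty$ (using homogeneity at $0$), monotonicity forces $\Phi_U(\cdot,X_0)$ to be constant, hence $U$ is a cone about $X_0$ as well and therefore trivial. Choosing a \emph{regular} point $X_0$ is what makes the blow-up energy equal to $\omega_n/2$ immediately and eliminates the need for any dimension reduction.

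Your route---blow up at an arbitrary $Y\in F(U_*)\setminus\{0\}$, reduce to $\R^{(n-1)+1}$, iterate to $n=1$, then trace back via the equality case of the monotonicity formula---also works, but one statement needs correcting. The Weiss energy is \emph{not} literally preserved under Proposition~\ref{prop}: if $V(x_1,\dots,x_{n+1})=W(x_2,\dots,x_{n+1})$ are both cones, a Fubini computation on $\mathcal B_1$ gives
\[
\frac{\Phi_V}{\omega_n}=\frac{\Phi_W}{\omega_{n-1}},
\]
so it is the \emph{normalized} quantity $\Phi/\omega$ that is preserved. With this fix your induction goes through: at the bottom the one-dimensional cone is trivial with ratio $1/2$, and tracing back gives equality in the monotonicity formula at each $Y^{(k)}$, forcing $U^{(k)}$ to be homogeneous about $Y^{(k)}$ and hence one-dimensional lower, so trivial by the inductive hypothesis. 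This is correct but noticeably heavier than the paper's single regular-point argument; you may want to replace the whole inductive step by that observation.
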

\begin{proof} 
First we show that 
$$\Phi_U > \frac{\omega_n}{2}.$$ Assume by contradiction that this does not hold and let $X_0 \in F(U)$ be a point where we can touch $F(U)$ with a ball completely contained in $\{U>0\}$. Call $$\Phi_U(r,X_0) = \Phi_{\bar U}(r), \quad \bar U(X) = U(X -X_0).$$

Then, by \eqref{rescaledmon} and the fact that $U$ is a cone we obtain that $$\Phi_U(r, X_0) = \Phi_{U_r}(1, \frac{X_0}{r}) = \Phi_U(1, \frac{X_0}{r}).$$ Thus,
$$\lim_{r\to \infty} \Phi_U(r, X_0) = \Phi_U \leq \frac{\omega_n}{2}.$$ 
On the other hand, from the expansion of $U$ near $X_0$ (see Theorem \ref{oneside}) the blow-up energy $$\lim_{r \to 0} \Phi_U(r, X_0) = \frac{\omega_n}{2}.$$ By the monotonicity of $\Phi_U(r,X_0)$ we obtain that  $$\Phi_U(r,X_0) \equiv  \frac{\omega_n}{2},$$
and hence $U$ is a cone with respect to $X_0$, thus $U$ is the trivial cone, a contradiction.

Now we prove the existence of $\delta$ by compactness. If no such $\delta$ exists then we can find a sequence of cones $U_k$ with $\Phi_{U_k} \to \omega_n/2.$ By Lemma \ref{uniformcones} we may assume that $U_k \to U_*$ uniformly on compact sets. Thus $\Phi_{U_*} = \omega_n/2$ and hence $U_*$ is the trivial cone in view of the preceding argument. By the flatness Theorem \ref{mainT} and the compactness Theorem \ref{compact}, $F(U_k)$ are smooth in $B_1$ for all large $k$, a contradiction.
\end{proof}

\begin{lem}\label{dimred}Assume $U$ is a minimal cone in $\R^{n+1}$ and $X_0=e_1 \in F(U)$. Then, any blow-up sequence $$V_\lambda(X) = \lambda^{-1/2} U(X_0+\lambda X)$$ has a subsequence $V_{\lambda_k}, \lambda_k \to 0$ which converges uniformly on compact sets to $v(x_2,\ldots,x_{n+1})$ with $V$ a minimal cone in  $\R^{(n-1)+1}$. Moreover if $X_0$ is a singular point for $F(U)$, then $V$ is a non-trivial cone.\end{lem}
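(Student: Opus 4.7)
The plan is to proceed in four steps: extract a convergent subsequence of the blow-ups, identify the limit as a minimal cone via compactness and the monotonicity formula, prove translation invariance in the $e_1$ direction, and finally invoke Proposition \ref{prop} to reduce the ambient dimension.

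First I would observe that by the scaling invariance \eqref{scaling} each $V_\lambda$ is a minimizer, and by the optimal regularity estimate in Corollary \ref{optreg} the family $\{V_\lambda\}$ is uniformly $C^{1/2}$ on compact sets: the bound $V_\lambda(e_{n+1}) = \lambda^{-1/2} U(e_1 + \lambda e_{n+1})$ stays uniformly bounded as $\lambda \to 0$ since $e_1 \in F(U)$ and $U$ is $C^{1/2}$ there. Extracting a subsequence $V_{\lambda_k} \to V$ uniformly on compacts, the compactness Theorem \ref{compact} shows that $V$ is a minimizer. Applying the monotonicity formula of Theorem \ref{MF} to the translated minimizer $\bar U(Y) := U(Y + X_0)$ at the origin, and invoking Remark \ref{rem2}, I obtain that $V$ is homogeneous of degree $1/2$ about $0$.

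The crucial step will be to show that $V$ is independent of $x_1$. Using the $1/2$-homogeneity of $U$ about the origin and the fact that $X_0 = e_1$ lies on the ray $\R_+ e_1$, for any fixed $t \in \R$ and all small $\lambda > 0$ one computes
\be
V_\lambda(X + t e_1) = \lambda^{-1/2} U\bigl((1 + \lambda t) e_1 + \lambda X\bigr) = \lambda^{-1/2}(1 + \lambda t)^{1/2} U\bigl(e_1 + \mu X\bigr) = V_{\mu}(X),
\ee
where $\mu := \lambda/(1 + \lambda t)$. As $\lambda_k \to 0$ we have $\mu_k \to 0$ with $\rho_k := \mu_k/\lambda_k \to 1$, and the identity $V_{\mu_k}(X) = \rho_k^{-1/2} V_{\lambda_k}(\rho_k X)$ combined with the uniform $C^{1/2}$ bound forces $V_{\mu_k}(X) \to V(X)$ uniformly on compact sets. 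Passing to the limit in the displayed identity yields $V(X + t e_1) = V(X)$ for every $t \in \R$, so we may write $V(X) = v(x_2, \ldots, x_{n+1})$.

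By Proposition \ref{prop}, $v$ is a minimizer of $E$ in $\R^{n}$, and the $1/2$-homogeneity of $V$ transfers to $v$ by restriction, so $v$ is a minimal cone in $\R^{(n-1)+1}$. For the final claim, if $X_0$ is a singular point of $F(U)$ then by definition no blow-up sequence of $U$ at $X_0$ converges to the trivial cone, hence $V$ is non-trivial; since a trivial cone $v$ in $\R^n$ would lift to the trivial cone $V$ in $\R^{n+1}$ (after an inessential rotation of the hyperplane containing $F(V)$), $v$ itself must be non-trivial. I expect the translation-invariance step to be the main obstacle: one cannot invoke uniqueness of the blow-up, and must instead extract the exact functional identity $V_\lambda(X + te_1) = V_\mu(X)$ from the global $1/2$-homogeneity of $U$ about the origin and then control the ratio $\rho_k \to 1$ via the uniform $C^{1/2}$ equicontinuity of the rescaled family.
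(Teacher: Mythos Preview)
Your proposal is correct and follows essentially the same approach as the paper: extract a subsequential blow-up limit via compactness and the monotonicity formula (Remark \ref{rem2}), prove $e_1$-translation invariance from the global $1/2$-homogeneity of $U$, and invoke Proposition \ref{prop} to drop a dimension. The only organizational difference is in the translation-invariance step: the paper writes the identity with the \emph{same} scale on both sides,
\[
V_\lambda(X) = (1+t\lambda)^{-1/2}\, V_\lambda\bigl(tX_0 + (1+t\lambda)X\bigr),
\]
and passes directly to the limit $\lambda_k \to 0$, whereas you relate two different scales $V_\lambda(X+te_1) = V_\mu(X)$ with $\mu = \lambda/(1+\lambda t)$ and then argue separately that $V_{\mu_k} \to V$ because $\mu_k/\lambda_k \to 1$. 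These are the same algebraic manipulation of homogeneity, packaged differently; the paper's version is marginally cleaner because it avoids the auxiliary step of controlling $V_{\mu_k}$ via $V_{\lambda_k}$, but your route is perfectly valid.
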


\begin{proof} In view of Remark \ref{rem2} and Proposition \ref{prop}, we only need to show that $V$ is constant in the $e_1$ direction. 

From the fact that $U$ is homogeneous of degree 1/2 and from the formula for $V_\lambda$ we get that
\begin{align*}V_\lambda(X) &= \lambda^{-1/2} (1+t\lambda)^{-1/2} U((1+ t\lambda)(X_0+\lambda X))\\ &= (1+t\lambda)^{-1/2}V_\lambda(tX_0 +(1+t\lambda)X).\end{align*}

Letting $\lambda=\lambda_k \to 0$ we obtain that 
$$V(X) = V(tX_0 +X), \quad \text{for all $t$}.$$
Thus, $V$ is constant in the $X_0=e_1$ direction.

The final statement follows from the flatness Theorem \ref{mainT}.
\end{proof}

Assume that  $U$ is a non-trivial minimal cone in $\R^{n+1}$ for some dimension $n$. Then by Lemma \ref{dimred} we obtain that if  $F(U)$ has a singular point different than the origin, then there exists a  non-trivial minimal cone in $\R^{(n-1)+1}.$ By repeating this dimension reduction argument, we can assume that there is a dimension $k\leq n$ and a non-trivial cone in $\R^{k+1}$ which is regular at all points except at 0.

Clearly, all minimal cones in dimension $n=1$ are trivial. 
In the next theorem we show that there are no non-trivial minimal cones in $\R^{2+1}.$

\begin{thm}\label{dimension2}If $n=2$, all minimal cones are trivial. \end{thm}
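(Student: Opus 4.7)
The plan is to argue by contradiction, assuming that $U$ is a non-trivial minimal cone in $\R^{2+1}$.

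First I would pin down the structure of $F(U)$. By Lemma \ref{dimred}, for any $X_0 \in F(U)\setminus\{0\}$, any blow-up of $U$ at $X_0$ is a minimal cone in $\R^{1+1}$ which is constant in the $X_0$-direction; since every minimal cone in $\R^{1+1}$ is trivial (as remarked immediately before the statement), this blow-up is trivial, so $X_0$ is a regular point. The flatness theorem (Theorem \ref{mainT}) then gives $C^{2,\alpha}$-regularity of $F(U)$ near every such $X_0$. Since $F(U)\setminus\{0\}$ is also a $1$-dimensional cone in $\R^2$ (invariant under positive dilations), it must be a finite disjoint union of open rays $R_1,\dots,R_{2k}$ from the origin, bounding alternating open sectors of positive and zero phase, each of positive opening angle by the density estimate (Corollary \ref{cordensity}).

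The main step is to show that this configuration must reduce to a single line through $0$, which corresponds to the trivial cone. My approach is to apply the Weiss monotonicity formula (Theorem \ref{MF}) to $U$ at a point $X_0 \in F(U)\setminus\{0\}$. Since $X_0$ is a regular point, $\lim_{r\to 0^+}\Phi_U(r,X_0) = \omega_n/2 = \pi/2$; since $U$ is homogeneous from $0$, the rescalings $V_r(Y):=r^{-1/2}U(X_0+rY)$ converge uniformly on compact sets to $U(Y)$ as $r\to\infty$, so $\lim_{r\to\infty}\Phi_U(r,X_0)=\Phi_U$. Non-triviality of $U$ combined with the energy gap (Proposition \ref{EE}) gives $\Phi_U > \pi/2$, and then the equality case of Theorem \ref{MF} implies $U$ is not homogeneous with respect to $X_0$. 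To extract the contradiction, I would perturb the ray configuration by rotating a single ray $R_i$ through a small angle $\eps$, construct a competitor $U_\eps$ using the trivial-cone local expansion (Theorem \ref{V_approx}) near each ray together with a suitable interior harmonic extension, and compute $\frac{d}{d\eps}E(U_\eps,B_R)\big|_{\eps=0}$ directly --- in the same spirit as the explicit calculation at the end of the proof of Proposition \ref{min-is-visc} that fixed the constant $\sqrt{2/\pi}$. Minimality of $U$ forces this first variation to vanish for every rotation direction, and, combined with the rigidity of $U$ as a cone from $0$, this is compatible only with a single-line configuration.

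The hardest part will be closing this final rigidity step: extracting a concrete geometric constraint on the ray configuration from the vanishing of the first variation. The trivial-cone approximation near each ray only controls the leading-order behavior, while a rigorous energy comparison needs to track both the boundary contribution on each rotated ray and the change in the global harmonic energy in $\R^3$. One natural route around this is to work intrinsically on $S^2$ via the spherical equation $\Delta_{S^2}\phi + \frac{3}{4}\phi = 0$ satisfied by the angular profile $\phi$ of a cone $U = r^{1/2}\phi(\omega)$, and to identify $\frac{3}{4}$ as the first mixed Dirichlet--Neumann eigenvalue of $-\Delta_{S^2}$ (Dirichlet on the zero arcs of the equator, Neumann on the positive arcs) only for the half-equator configuration --- a spherical rearrangement or Friedland--Hayman type estimate.
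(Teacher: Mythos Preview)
Your setup is fine: the reduction to a finite union of rays through the origin, with $\Sigma_U=\{0\}$, is exactly what the paper uses as a starting point. But the argument you propose after that has a real gap.

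The first-variation step cannot do the work you want it to. Any local minimizer of $E$ is critical under smooth domain variations, so the first variation of $E(U_\eps,B_R)$ at $\eps=0$ vanishes \emph{automatically} for \emph{every} minimal cone, trivial or not. Vanishing of the first variation under ``rotate one ray'' therefore imposes no new constraint on the ray configuration; it cannot single out the two-opposite-rays case. To separate trivial from non-trivial cones along these lines you would need a second-variation (stability) computation, or a genuine global competitor that beats $U$ --- neither of which is in your outline. Your alternative spectral route (show that $3/4$ is the first mixed Dirichlet--Neumann eigenvalue of $-\Delta_{S^2}$ only for the half-equator) is a reasonable idea, but you have not carried it out, and a Friedland--Hayman type inequality in this mixed setting is not something you can simply cite.

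The paper's proof is completely different and avoids these difficulties. Following Savin--Valdinoci, it translates $U$ by $\pm\psi_R(|X|)e_1$ with a logarithmic cutoff $\psi_R$, and shows that
\[
E(U_R^+,B_{R^2})+E(U_R^-,B_{R^2}) \le 2E(U,B_{R^2}) + C\int_{B_{R^2}}|\nabla U|^2\,\|A\|^2\,dX,
\]
where the error term is $O(1/\log R)$ precisely because $n=2$. Since $E(U_R^\pm,B_{R^2})\ge E(U,B_{R^2})$, this gives $E(U_R^+,B_{R^2})\le E(U,B_{R^2})+\delta(R)$ with $\delta(R)\to 0$. If $U$ were non-trivial, the multi-sector geometry forces $\underline w=\min\{U,U_R^+\}$ to be non-harmonic in a fixed ball, so one can strictly lower the energy by a fixed amount $\eta>0$ independent of $R$, contradicting minimality for $R$ large. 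This is a global competitor argument, not a first-variation one, and the logarithmic cutoff is the key mechanism that makes it work in dimension $2$.
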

\begin{proof}
We follow the strategy in \cite{SV}, where the authors proved that non-local minimal cones (defined in \cite{CRSa}) are trivial in $\R^2.$

Let $U$ be a minimal cone. By the discussion above $\Sigma_U=0$. Define, 
$$\psi_R(t) := \begin{cases}1 \quad 0 \leq t \leq R, \\ \ \\2-\dfrac{\log t}{\log R}\quad R < t \leq R^2, \\ \ \\ 0 \quad t \geq R^2.\end{cases}$$
The function $\psi_R$ is a Lipschitz continuos function with compact support in $\R.$

Notice that 

$$\psi'_R(t) = \begin{cases}0 \quad t \in (0,R) \cup (R^2,\infty), \\ \ \\  \dfrac{-1}{t\log R} \quad t\in(R,R^2).\end{cases}$$

We define a bi-Lipschitz change of coordinates:

$$Y:= X+ \psi_R (|X|) e_1$$
and let $$U_R^+(Y) = U(X).$$ Next we estimate $E(U^+_R,B_{R^2})$ in terms of $E(U,B_{R^2})$. We have,
$$D_X Y = I+A$$ with $$A(X) = \psi_R'(|X|)\begin{pmatrix}
  \frac{x_1}{|X|} & \frac{x_2}{|X|} & \cdots & \frac{x_{n+1}}{|X|} \\
  0 & 0 & \cdots & 0 \\
  \vdots  & \vdots  & \ddots & \vdots  \\
  0 & 0 & \cdots & 0
 \end{pmatrix} $$ and $$\|A\| \leq |\psi'_R(X)| << 1.$$
Notice that 
$$D_Y X = (I+A)^{-1} = I - \frac{1}{1+trA} A.$$
We have,
$$\nabla_Y U^+_R = \nabla_X U \;D_YX, \quad dY = (1+tr A) dX,$$
thus
$$|\nabla U_R^+|^2 dY = \nabla U\left(I(1+trA) - (A+A^T) + \frac{1}{1+trA} AA^T \right) (\nabla U)^T dX,$$
and
$$\mathcal{H}^n(\{U^+_R > 0\} \cap \mathcal B_{R^2}) = \int_{\{U>0\} \cap \mathcal B_{R^2}} (1+trA) dx.$$
Writing the same equalities for $U_R^-$ which is defined as $U_R^+$ but changing $\psi_R$ into $-\psi_R$ thus $A$ into $-A$ we obtain,
$$E(U_R^+,B_{R^2}) + E(U_R^-, B_{R^2}) \leq 2 E(U,B_{R^2}) + C \int_{B_{R^2}} |\nabla U|^2 \|A\|^2 dX$$
with
\begin{align*} \int_{B_{R^2}} |\nabla U|^2 \|A\|^2 dX & = \int_R^{R^2} \left(\int_{\p B_r}  |\nabla U|^2 \|A\|^2 d\sigma \right) dr \\ & \leq \int_R^{R^2} C r^2 r^{-1} (\frac{r^{-1}}{\log R})^2 dr \leq \frac{C}{\log R} \to 0, \quad \text{as $R \to \infty.$}\end{align*} 

The inequality above is the crucial step where we used that $n=2$. In conclusion, since $E(U_R^\pm, B_{R^2}) \geq E(U, B_{R^2})$ we get
$$E(U_R^+,B_{R^2}) \leq E(U, B_{R^2}) + \delta(R)$$
with $\delta(R) \to 0$ as $R \to \infty.$
Now the proof continues as in \cite{SV}. We sketch it for completeness. Since $$E(\underline w, B_{R^2}) + E(\bar w, B_{R^2}) = E(U, B_{R^2}) + E(U_R^+, B_{R^2}),$$
with 
$$\underline w :=\min\{U,U_R^+\}, \quad \bar w = \max\{U, U_R^+\},$$
the inequality above shows that \be\label{crucial}E(\underline w,B_{R^2}) \leq E(U, B_{R^2}) + \delta(R).\ee 

We remark that $\{U=0\}$ consists of a finite number of closed sectors, since $\Sigma_U=0.$

Now, assume by contradiction that $U$ is non-trivial. Then we can find a direction (say $e_1$) and either a point $P \in \{U=0\}^o$ such that $P \pm e_1 \in \{U>0\}$ or a point  $P \in \{U>0\}$ such that $P \pm e_1 \in \{U=0\}^o$. Assume for simplicity that we are in the first case. This implies that  \begin{align*}\underline w&=U < U_R^+ \quad \text{ in  neighborhood of $P,$}\\ \underline w&=U_R^+ < U \quad \text{ in  neighborhood of $P-e_1.$}\end{align*}

In conclusion, $\underline w$ is not harmonic in $B^+_{|P|+2}$ and therefore we can modify $\underline w$ inside this ball without changing its values on $\{x_{n+1}=0\}$ so that the resulting function $v$ satisfies
$$E(v, B_{|P|+2}) \leq E(\underline w, B_{|P|+2}) -\eta$$
with $\eta$ small independent of $R.$

In conclusion, using \eqref{crucial} we obtain
$$E(v,B_{R^2}) \leq E(U, B_{R^2}) + \delta(R) -\eta,$$
and we contradict the minimality of $U$ for $R$ large enough.
\end{proof}

By our flatness Theorem \ref{mainT}, Remark \ref{rem2} and the compactness Theorem \ref{compact}, we immediately obtain the following Corollary.

\begin{cor} Minimizers to $E$ in $\R^{2+1}$ have $C^{2,\alpha}$ free boundaries.\end{cor}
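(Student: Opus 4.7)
The plan is to show that at every free boundary point, a blow-up produces the trivial cone, and then invoke the flatness theorem to upgrade flatness to $C^{2,\alpha}$ regularity.

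First I would fix an arbitrary $X_0 \in F(u)$. By translation we may assume $X_0 = 0$. Consider the rescalings
\[
u_\lambda(X) = \lambda^{-1/2} u(\lambda X),
\]
which are again minimizers of $E$ by the scaling invariance \eqref{scaling}. By the optimal regularity (Corollary \ref{optreg}) and non-degeneracy (Lemma \ref{lem3}), the family $\{u_\lambda\}_{\lambda < \lambda_0}$ is uniformly $C^{1/2}$ on compact sets, so by Arzel\`a--Ascoli, along a subsequence $\lambda_k \to 0$ the rescalings converge locally uniformly to some $U_\ast$. By the compactness Theorem \ref{compact}, $U_\ast$ is itself a minimizer of $E$. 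Furthermore, by Remark \ref{rem2} the limit $\Phi_u(0^+)$ exists, and using \eqref{rescaledmon} together with monotonicity (Theorem \ref{MF}), $\Phi_{U_\ast}(r)$ is constant in $r$; hence $U_\ast$ is homogeneous of degree $1/2$, i.e.\ a minimal cone in the sense of Definition \ref{def-cones}.

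Now I use the hypothesis $n=2$: by Theorem \ref{dimension2}, every minimal cone in $\R^{2+1}$ is trivial, so (up to a rotation) $U_\ast = U_0$. In particular, $0$ is a regular point of $F(u)$, and for all $k$ sufficiently large the rescaled minimizer $u_{\lambda_k}$ satisfies the flatness hypothesis of Theorem \ref{mainT}: its zero set in $\mathcal{B}_1$ is trapped between two parallel hyperplanes at distance at most $\bar\eps$. Since $u_{\lambda_k}$ is a minimizer, it is a viscosity solution to \eqref{FB} (with $1$ replaced by $\sqrt{2/\pi}$) by Proposition \ref{min-is-visc}, so Theorem \ref{mainT} applies (after the trivial rescaling absorbing the constant $\sqrt{2/\pi}$) and yields that $F(u_{\lambda_k}) \cap \mathcal B_{1/2}$ is a $C^{2,\alpha}$ graph. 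Rescaling back, $F(u)$ is $C^{2,\alpha}$ in a neighborhood of $0$.

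Since $X_0 \in F(u)$ was arbitrary, this shows $F(u)$ is locally a $C^{2,\alpha}$ surface throughout; in the language of Section 5, the singular set $\Sigma_u$ is empty when $n=2$. No step is really delicate here since all the hard work has already been done: the classification of cones in Theorem \ref{dimension2} removes singular blow-ups, and Theorem \ref{mainT} provides the regularity improvement; the only thing to check is that the uniform non-degeneracy and $C^{1/2}$ bounds pass to the blow-up limit so that $0$ remains on the free boundary of $U_\ast$, which follows from the Hausdorff convergence of zero sets in Theorem \ref{compact}.
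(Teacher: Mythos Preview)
Your proposal is correct and follows exactly the approach the paper indicates: the corollary is stated immediately after Theorem \ref{dimension2} with the remark that it follows from the flatness Theorem \ref{mainT}, Remark \ref{rem2}, and the compactness Theorem \ref{compact}, and you have simply filled in the details of that one-line argument. The extra care you took in invoking Proposition \ref{min-is-visc} (so that Theorem \ref{mainT} applies) and in noting that Hausdorff convergence of zero sets keeps $0$ on $F(U_\ast)$ is appropriate and matches the implicit reasoning in the paper.
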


In the next two lemmas, we follow the dimension reduction argument due to Federer for minimal surfaces  (see also \cite{CRSa}), and prove the first claim in  Theorem \ref{thm1}, that is 
$$\mathcal H^{s}(\Sigma_u) =0, \quad s>n-3$$ for all minimizers $u$ of $E$ in $\Omega \subset \R^{n+1}.$

\begin{lem}\label{first-cones} Assume that for some $s>0$, $\mathcal H^s( \Sigma_U) =0$ for all minimal cones $U$ in $\R^{n+1}$. Then $\mathcal H^s(\Sigma_v)=0$ for all minimizers $u$ of $E$ defined on  $\Omega \subset \R^{n+1}.$ 
\end{lem}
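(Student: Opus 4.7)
The plan is a Federer-style dimension reduction, whose two key ingredients are an upper semicontinuity property of the singular set under convergence of minimizers and the fact that blow-ups at singular points are themselves singular minimal cones.

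First I would establish the semicontinuity of the singular set. Suppose $u_k$ are minimizers converging uniformly to a minimizer $u_\infty$ (Theorem \ref{compact}), and $X_k \in \Sigma_{u_k}$ with $X_k \to X_\infty \in F(u_\infty)$. I claim $X_\infty \in \Sigma_{u_\infty}$: otherwise some blow-up of $u_\infty$ at $X_\infty$ would be the trivial cone $U_0$, so by the flatness Theorem \ref{mainT} the free boundary $F(u_\infty)$ would be a $C^{2,\alpha}$ graph in a neighborhood of $X_\infty$; combined with the Hausdorff convergence $\{u_k=0\}\to\{u_\infty=0\}$ furnished by Theorem \ref{compact}, an appropriate rescaling about $X_k$ would place $u_k$ in the flatness regime of Theorem \ref{mainT} for $k$ large, contradicting $X_k \in \Sigma_{u_k}$. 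The consequence needed below is: if $X_0 \in \Sigma_u$ and the blow-ups $u_{\lambda_k}(X):=\lambda_k^{-1/2}u(X_0+\lambda_k X)$ with $\lambda_k\to 0$ converge (by Remark \ref{rem2}) to a minimal cone $U_*$, then $0 \in \Sigma_{U_*}$.

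The core of the argument is a contradiction. Suppose some minimizer $u$ satisfies $\mathcal{H}^s(\Sigma_u)>0$. Working with the spherical Hausdorff pre-measure $\mathcal{H}^s_\infty$ (which has the same null sets as $\mathcal{H}^s$), a standard density-point selection produces $X_0 \in \Sigma_u$ and $r_k \to 0^+$ such that
\[
r_k^{-s}\,\mathcal{H}^s_\infty(\Sigma_u\cap B_{r_k}(X_0)) \geq c_0 > 0 .
\]
Set $u_k(X):=r_k^{-1/2}u(X_0+r_kX)$ and pass to a subsequence for which $u_k \to U_*$ uniformly on compact sets, with $U_*$ a minimal cone. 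The compact sets $K_k:=\Sigma_{u_k}\cap\overline{B_1}=r_k^{-1}(\Sigma_u-X_0)\cap\overline{B_1}$ satisfy $\mathcal{H}^s_\infty(K_k)\geq c_0$, and after a further subsequence converge in the Hausdorff distance to some compact $K_\infty\subset\overline{B_1}$. By the semicontinuity statement above (applied pointwise to sequences $Y_k \in K_k$ with $Y_k\to Y_\infty$) one has $K_\infty \subset \Sigma_{U_*}$.

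The main obstacle, and the step that requires the most care, is the passage to the limit of the measure: one needs the upper semicontinuity
\[
\mathcal{H}^s_\infty(K_\infty)\ \geq\ \limsup_{k\to\infty}\mathcal{H}^s_\infty(K_k)\ \geq\ c_0,
\]
which is a classical property of the spherical Hausdorff pre-measure under Hausdorff convergence of compact sets (for any countable covering of $K_\infty$ by open balls, finitely many cover $K_\infty$ and hence $K_k$ for $k$ large). Granting it, one concludes $\mathcal{H}^s(\Sigma_{U_*})\geq \mathcal{H}^s_\infty(\Sigma_{U_*})\geq c_0>0$, contradicting the hypothesis that $\mathcal{H}^s(\Sigma_U)=0$ for every minimal cone $U$ in $\R^{n+1}$. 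Therefore $\mathcal{H}^s(\Sigma_u)=0$ for every minimizer, as claimed.
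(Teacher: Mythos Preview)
Your argument is correct and is one of the two standard presentations of Federer's dimension reduction; the paper uses the other. Both rest on the same geometric input you isolate first --- the upper semicontinuity of the singular set under convergence of minimizers, which follows from Theorem \ref{compact} and the flatness Theorem \ref{mainT} --- but they organize the measure-theoretic step differently.

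The paper does not pass through density points or $\mathcal H^s_\infty$. Instead it proves by a blow-up/compactness contradiction a local covering property (call it $(P)$): for every $Y\in\Sigma_u$ there is $d_Y>0$ such that for all $\delta\le d_Y$, $\Sigma_u\cap \mathcal B_\delta(Y)$ can be covered by finitely many balls with $\sum r_i^s\le \tfrac12\delta^s$. The point is that the blow-up cone $U$ has $\mathcal H^s(\Sigma_U)=0$ by hypothesis, so $\Sigma_U\cap\mathcal B_1$ admits a finite cover with $\sum r_i^s\le\tfrac12$, and the flatness theorem forces $\Sigma_{u_{\delta_k}}\cap\mathcal B_1$ into slightly fattened balls for $k$ large. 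One then iterates $(P)$ on the sets $D_k=\{Y\in\Sigma_u: d_Y\ge 1/k\}$ to drive $\mathcal H^s(D_k)$ to zero. Your route replaces this explicit iteration by the pair of classical GMT facts you invoke: existence of a point of positive upper $\mathcal H^s_\infty$-density whenever $\mathcal H^s(\Sigma_u)>0$, and upper semicontinuity of $\mathcal H^s_\infty$ along Hausdorff-converging compacta. The trade-off is that your proof is shorter and more conceptual, while the paper's is fully self-contained and does not appeal to those external lemmas. If you keep your version, it would be worth giving a precise reference for the density-point statement (e.g.\ Simon's \emph{Lectures on Geometric Measure Theory} or Giusti's appendix on Federer reduction), since that is the one step a careful reader might ask you to justify.
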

\begin{proof} First we show the following property $(P)$:
for every $Y \in \Sigma_u$ there exists $d_Y>0$ such that for any $\delta \leq d_Y$, any subset $D$ of $\Sigma_u \cap \mathcal B_\delta(Y)$ can be covered by a finite number of balls $B_{r_i}(Y_i)$, $Y_i \in D$ such that $$\sum_i r_i ^s \leq \frac{\delta^s}{2}.$$

Property $(P)$ follows by compactness. Indeed, given $Y \in \Sigma_u$, assume that the conclusion does not hold for a sequence of $\delta_k \to 0$. By possibly passing to a subsequence, we may assume that the  sequence 
$u_{\delta_k}$ converges uniformly to a minimal cone $U$ where $$u_{\lambda}(X) = \lambda^{-1/2} u(Y+\lambda X).$$ By our hypothesis, we can cover $\Sigma_U \cap \mathcal B_1$ by  a finite number of balls $\mathcal B_{r_i/4}(X_i)$ with radius $r_i/4$ so that $$\sum_i r_i^s \leq \frac 12.$$  
On the other hand, by the flatness Theorem \ref{mainT}, $$\Sigma_{u_{\delta_k}} \cap \mathcal B_1 \subset \bigcup_i \mathcal B_{r_i/2}(X_i)$$ for all large $k$. Thus, after scaling, $u$ satisfies the conclusion in $\mathcal B_{\delta_k}$ for all large $k$ and we reach a contradiction. 

Next, denote by $D_k$ the set of $Y \in \Sigma_v$ with $d_Y \geq 1/k.$ Fix $Y_0 \in D_k.$ By property $(P)$, we can cover $D_k \cap \mathcal B_{r_0}(Y_0)$, $r_0=1/k$ with a finite number of balls $\mathcal B_{r_i}(Y_i)$ with $Y_i \in D_k$ and $$\sum_i r^{s}_i \leq \frac 1 2 r_0^s.$$

Now, we repeat the same argument for each ball $\mathcal B_{r_i}(Y_i)$ and cover it with balls $\mathcal B_{r_{ij}}(Y_{ij})$ with $Y_{ij} \in D_k$ and $$\sum_j r_{ij}^s \leq \frac 1 2 r_i^s.$$ By repeating this argument $m$ times we obtain that
$$\mathcal H^s(D_k \cap \mathcal B_{r_0}(Y_0)) =0,$$ hence $\mathcal H^s(D_k)=0$
and the conclusion follows by letting $k \to \infty$.
\end{proof}

\begin{lem} Assume that for some $s>0$, $\mathcal H^s( \Sigma_U) =0$ for all minimal cones $U$ in $\R^{n+1}$. Then $\mathcal H^{s+1}(\Sigma_V) =0$ for all minimal cones $V$ defined in $\R^{n+2}.$
\end{lem}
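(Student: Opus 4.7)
Let $V$ be a minimal cone in $\R^{n+2}$. Since $V$ is homogeneous of degree $1/2$, $\Sigma_V$ is a cone with vertex at $0$, and because $\mathcal H^{s+1}(\{0\})=0$ it suffices to show $\mathcal H^{s+1}(\Sigma_V\setminus\{0\})=0$. My plan is to mimic the covering argument of Lemma \ref{first-cones} with $s$ replaced by $s+1$: I would establish the analogue of property $(P)$, namely that for every $Y_0\in\Sigma_V\setminus\{0\}$ there is $d_{Y_0}>0$ such that, for every $\delta\leq d_{Y_0}$, any $D\subset\Sigma_V\cap \mathcal B_\delta(Y_0)$ can be covered by finitely many balls $B_{r_i}(Y_i)$ with $Y_i\in D$ and $\sum_i r_i^{s+1}\leq \delta^{s+1}/2$. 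Granted this, the iteration from Lemma \ref{first-cones} gives $\mathcal H^{s+1}(\Sigma_V\setminus\{0\})=0$ verbatim.

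I would prove the property by compactness. If it fails, there exist $Y_0$ and a scale sequence $\delta_k\to 0$ along which no such cover exists for some $D_k\subset \Sigma_V\cap \mathcal B_{\delta_k}(Y_0)$. After rotating assume $Y_0/|Y_0|=e_1$, and consider the blow-ups $V_k(X):=\delta_k^{-1/2}V(Y_0+\delta_k X)$; by Lemma \ref{dimred} a subsequence converges uniformly on compacts to a cone $W(X)=w(x_2,\ldots,x_{n+2})$, where $w$ is a minimal cone in $\R^{n+1}$. Because $W$ is translation-invariant in $e_1$, a point $(x_1,x')\in F(W)$ is regular for $W$ exactly when $x'$ is regular for $w$, so $\Sigma_W=\R e_1\times \Sigma_w$. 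The inductive hypothesis applied to $w$ gives $\mathcal H^s(\Sigma_w)=0$, so fixing $\eta>0$ I can cover $\Sigma_w\cap \mathcal B_2$ by balls $\mathcal B_{\rho_j}(z_j)$ with $z_j\in\Sigma_w$ and $\sum_j \rho_j^s<\eta$. Covering each cylinder $[-1,1]e_1\times \mathcal B_{\rho_j}(z_j)$ by $\lceil C_n\rho_j^{-1}\rceil$ balls of radius $2\rho_j$ in $\R^{n+2}$ then produces a cover of $\Sigma_W\cap \mathcal B_1$ of total $(s+1)$-content $\leq C_n'\sum_j\rho_j^s<C_n'\eta$.

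The remaining step is upper semicontinuity of the singular set under blow-up: if $X_k\in\Sigma_{V_k}$ and $X_k\to X_\infty$, then $X_\infty\in \Sigma_W$. Otherwise $F(W)$ would be $C^{2,\alpha}$ near $X_\infty$ by Theorem \ref{mainT}, so Hausdorff convergence of the free boundaries (Theorem \ref{compact}) together with Theorem \ref{mainT} applied to $V_k$ would force $X_k$ to be regular for $k$ large, contradicting $X_k\in\Sigma_{V_k}$. This yields $\Sigma_{V_k}\cap \mathcal B_1\subset N_{\sigma_k}(\Sigma_W)$ for some $\sigma_k\to 0$; enlarging each ball in the cover of $\Sigma_W\cap \mathcal B_1$ by $\sigma_k$ and replacing each center by a nearby point of $D_k$ (doubling radii and so losing a factor $2^{s+1}$ in $(s+1)$-content) gives a cover of $D_k$ centered in $D_k$ with total content $\leq 2^{s+1}C_n'\eta$. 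Choosing $\eta = (2^{s+2}C_n')^{-1}$ and then rescaling radii by $\delta_k$ converts this into a cover of $D_k\subset \Sigma_V\cap \mathcal B_{\delta_k}(Y_0)$ with $\sum_i r_i^{s+1}\leq \delta_k^{s+1}/2$, contradicting the assumed failure of property $(P)$ at scale $\delta_k$. The main obstacle is this transfer step: the upper semicontinuity of $\Sigma$ under blow-up relies crucially on the flatness Theorem \ref{mainT}, and the book-keeping required to keep the centers inside $D_k$ (at the cost of only a universal constant in $(s+1)$-content) must be performed carefully.
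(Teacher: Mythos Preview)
Your proof is correct and rests on the same blow-up/compactness mechanism as the paper (Lemma~\ref{dimred} together with upper semicontinuity of the singular set via Theorem~\ref{mainT} and Theorem~\ref{compact}). The paper packages the argument a bit differently: since $\Sigma_V$ is a cone, it reduces at once to showing $\mathcal H^{s}(\Sigma_V\cap\partial\mathcal B_1)=0$ and then establishes property~$(P)$ with exponent $s$ on the sphere, where the blow-up limit at a point of $\partial\mathcal B_1$ is (by Lemma~\ref{dimred}) a cone constant in one direction and hence covered directly by the hypothesis on minimal cones in $\R^{n+1}$. This sidesteps your intermediate step of lifting an $s$-content cover of $\Sigma_w\cap\mathcal B_2$ to an $(s+1)$-content cover of the cylinder $[-1,1]e_1\times\Sigma_w$ by slicing into balls. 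The two routes are equivalent; yours is more explicit about the bookkeeping, while the paper's is a line shorter.
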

\begin{proof}It suffices to show that $\mathcal H^s(\Sigma_V \cap \p \mathcal B_1) = 0$. Using our assumption we can deduce by the same compactness argument in the previous lemma, that  when restricted to $\p \mathcal B_1$, $\Sigma_V\cap \p \mathcal B_1$ satisfies the same property $(P)$ as above. The conclusion now follows again with the same argument as in Lemma \ref{first-cones}.\end{proof}

In dimension $n=3$, in view of Theorem \ref{dimension2}, $\mathcal H^s(\Sigma_U)=0$ for all $s>0$, for all minimal cones $U$. This fact, combined with the previous two lemmas gives the desired claim that 
$$\mathcal H^{s}(\Sigma_u) =0, \quad s>n-3$$ for all minimizers $u$ in $\R^{n+1}.$

Next we show the second claim in Theorem \ref{thm1}, that is $F(u)$ has locally finite $\mathcal H^{n-1}$ measure for al minimizers $u$ in $\R^{n+1}$.

\begin{lem} \label{last-cone}Assume $u$ is a minimizer in $B_2,$ with $\|u\|_{C^{1/2}} \leq M.$ Then, there exists $C(M)$ large depending on $M$ such that $$\mathcal H^{n-1}((F(u) \cap \mathcal B_1) \setminus \cup_ {i=1}^m \mathcal B_{\delta_i}(X_i)) \leq C(M),$$ for some  finite collection of balls $\mathcal B_{\delta_i}(X_i)$ with $$\sum_{i=1}^m \delta_i^{n-1} \leq 1/2.$$
\end{lem}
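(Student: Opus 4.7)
The plan is to argue by contradiction, combining the compactness Theorem \ref{compact}, the dimension bound $\mathcal H^s(\Sigma_u)=0$ for $s>n-3$ just established, and the flatness Theorem \ref{mainT} to get uniform $C^{2,\alpha}$ control on the regular part of the free boundary of a convergent subsequence.

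Suppose the lemma fails. Then I would extract a sequence of minimizers $u_k$ in $B_2$ with $\|u_k\|_{C^{1/2}}\le M$ such that, for every finite collection of balls $\{\mathcal B_{\delta_i}(X_i)\}$ with $\sum_i\delta_i^{n-1}\le 1/2$, one has
$$\mathcal H^{n-1}\!\left((F(u_k)\cap \mathcal B_1)\setminus \bigcup_i \mathcal B_{\delta_i}(X_i)\right)>k.$$
By Theorem \ref{compact}, after passing to a subsequence $u_k\to u_*$ uniformly on compact subsets of $B_2$, $u_*$ is itself a minimizer, and $\{u_k=0\}\to\{u_*=0\}$ in the Hausdorff distance. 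Since $\mathcal H^{n-1}(\Sigma_{u_*}\cap \overline{\mathcal B_{5/4}})=0$ by the previous lemmas, for any prescribed small $\eta>0$ (depending only on $n$) I can cover $\Sigma_{u_*}\cap \overline{\mathcal B_{5/4}}$ by finitely many balls $\mathcal B_{r_i/4}(Y_i)$ with $Y_i\in \Sigma_{u_*}$ and $\sum_i r_i^{n-1}\le \eta$.

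Outside the enlarged cover $\bigcup_i\mathcal B_{r_i/2}(Y_i)$, every point of $F(u_*)\cap \overline{\mathcal B_1}$ is regular, so $F(u_*)$ is a $C^{2,\alpha}$ surface there. By compactness of this closed subset, there exists a single scale $\rho>0$ at which $F(u_*)$ is $\bar\eps/2$-flat at each of its points outside the cover. The Hausdorff convergence of the zero sets, combined with the non-degeneracy and density estimates of Corollary \ref{cordensity} applied to the sequence, then transfers this flatness to $F(u_k)$ at the same scale $\rho$ for all $k$ large, so Theorem \ref{mainT} yields a uniform $C^{2,\alpha}$ bound on the part of $F(u_k)\cap\overline{\mathcal B_1}$ lying outside $\bigcup_i\mathcal B_{r_i/2}(Y_i)$. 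Choosing $X_i\in F(u_k)$ closest to $Y_i$ and setting $\delta_i:=4r_i$ gives $\bigcup_i\mathcal B_{r_i/2}(Y_i)\subset \bigcup_i\mathcal B_{\delta_i}(X_i)$ and $\sum_i\delta_i^{n-1}\le 4^{n-1}\eta\le 1/2$ provided $\eta$ was chosen small enough. A standard area estimate for $C^1$ graphs of bounded slope then bounds $\mathcal H^{n-1}((F(u_k)\cap\mathcal B_1)\setminus \bigcup_i\mathcal B_{\delta_i}(X_i))$ by a constant $C(M)$ independent of $k$, contradicting the defining property of the sequence.

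The main obstacle will be the uniform transfer of flatness at the fixed scale $\rho$ from $F(u_*)$ to $F(u_k)$: one must verify that outside the singular cover, the Hausdorff convergence $\{u_k=0\}\to \{u_*=0\}$ is strong enough to trap $F(u_k)$ between two parallel hyperplanes of relative width $\bar\eps$ at every base point and at the single scale $\rho$. Once this quantitative step is in place, the remainder of the argument reduces to counting a bounded number of $C^{2,\alpha}$ graph patches of uniformly bounded area.
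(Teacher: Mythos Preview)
Your proposal is correct and follows essentially the same contradiction--compactness argument as the paper: extract a limit minimizer $u_*$, cover its singular set (which has $\mathcal H^{n-1}$ measure zero) by finitely many balls of small total $(n-1)$-content, and then use the flatness Theorem \ref{mainT} together with Hausdorff convergence of the zero sets to obtain uniform $C^{2,\alpha}$ control of $F(u_k)$ outside that cover for all large $k$. The paper's own proof is terser---it simply asserts that $(F(u_k)\cap \mathcal B_1)\setminus \bigcup_i \mathcal B_{\delta_i}(X_i)$ converges in $C^2$ to the corresponding piece of $F(u)$---whereas you spell out the intermediate step of finding a uniform flatness scale $\rho$ and then transferring it; this is exactly the point the paper is implicitly using, and your identification of it as the ``main obstacle'' is apt but not a gap.
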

\begin{proof} Assume by contradiction that we can find $u_k$ such that  $\|u_k\|_{C^{1/2}} \leq M$  and \be\label{hn-1}\mathcal H^{n-1}((F(u_k) \cap \mathcal B_1) \setminus \cup_ {i=1}^m \mathcal B_{\delta_i}(X_i)) \geq k,\ee for any collection of balls with $$\sum_{i=1}^m \delta_i^{n-1} \leq 1/2.$$
We may assume that $u_k$ converges uniformly on compact subsets of $B_2$ to a minimizer $u$. Since $\mathcal H^{n-1}(\Sigma_u) =0$ and $\Sigma_u$ is closed, $$\Sigma_u \cap \mathcal B_1 \subset \cup_{i=1}^m \mathcal B_{\delta_i/2}(X_i), \quad \sum_{i=1}^m\delta_i^{n-1} \leq 1/2,$$ for some collection of balls.

Since $F(u) \setminus \Sigma_u$ is locally a $C^{2,\alpha}$ surface, we conclude from the flatness Theorem that $(F(u_k)\cap \mathcal B_1) \setminus \cup_ {i=1}^m \mathcal B_{\delta_i}(X_i)$
is a $C^{2,\alpha}$ surface which converges in the $C^2$ norm to  $(F(u)\cap \mathcal B_1) \setminus \cup_ {i=1}^m \mathcal B_{\delta_i}(X_i)$ and we contradict \eqref{hn-1}.
\end{proof}

\begin{lem}\label{l_f}Assume $u$ is a minimizer in $B_2$, with $\|u\|_{C^{1/2}} \leq M.$ Then $$\mathcal H^{n-1}(F(u) \cap \mathcal B_1) \leq 2 C(M).$$ \end{lem}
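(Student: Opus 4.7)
The idea is to iterate Lemma \ref{last-cone} at successively finer scales; the condition $\sum \delta_i^{n-1}\le 1/2$ in the lemma is exactly the summability needed to make the iteration converge geometrically. The constant $C(M)$ will not deteriorate because both the minimality property and the $C^{1/2}$-norm are invariant under the rescaling \eqref{scaling}.

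Start by applying Lemma \ref{last-cone} once to obtain an initial decomposition $F(u)\cap\mathcal B_1 = G^{(1)}\cup B^{(1)}$, where $\mathcal H^{n-1}(G^{(1)})\le C(M)$ and $B^{(1)}\subset \bigcup_i \mathcal B_{\delta_i^{(1)}}(X_i^{(1)})$ with $\sum_i (\delta_i^{(1)})^{n-1}\le 1/2$. For each ball in this cover that meets $F(u)$, consider the rescaling $v_i(X):=(\delta_i^{(1)})^{-1/2}u(X_i^{(1)}+\delta_i^{(1)}X)$. By the scale invariance \eqref{scaling}, $v_i$ is again a minimizer with $\|v_i\|_{C^{1/2}}\le M$, so Lemma \ref{last-cone} applies to $v_i$ and, after undoing the rescaling, yields a decomposition of $F(u)\cap \mathcal B_{\delta_i^{(1)}}(X_i^{(1)})$ into a good part of $\mathcal H^{n-1}$-measure $\le C(M)(\delta_i^{(1)})^{n-1}$ and a bad part covered by balls whose $(n-1)$-contents sum to at most $(\delta_i^{(1)})^{n-1}/2$.

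Summing over $i$ and relabeling, the second-generation cover satisfies $\sum (\delta_j^{(2)})^{n-1}\le 1/4$ and the new good contribution to $\mathcal H^{n-1}(F(u)\cap\mathcal B_1)$ is at most $C(M)\cdot (1/2)$. Iterating this $k$ times produces
\[ \mathcal H^{n-1}(F(u)\cap \mathcal B_1)\le C(M)\bigl(1+\tfrac12+\tfrac14+\cdots+2^{-(k-1)}\bigr)+\mathcal H^{n-1}(R_k), \]
where $R_k$ is covered by a finite family of balls whose $(n-1)$-contents sum to $\le 2^{-k}$. Letting $k\to\infty$, the geometric series is bounded by $2C(M)$, while $R_k$ is covered by a family of balls with arbitrarily small total $(n-1)$-content, hence has zero $\mathcal H^{n-1}$-measure. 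This yields the stated bound $\mathcal H^{n-1}(F(u)\cap\mathcal B_1)\le 2C(M)$.

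The main technical point is ensuring that each rescaled function $v_i$ satisfies the hypotheses of Lemma \ref{last-cone}, namely that it be a minimizer in $B_2$. Since we may assume $X_i^{(1)}\in F(u)\cap\mathcal B_1$, $v_i$ is automatically a minimizer in a ball of radius $\ge (2-|X_i^{(1)}|)/\delta_i^{(1)}$; if this is smaller than $2$, one either applies an additional harmless rescaling that brings the domain up to $B_2$ (and rescales the conclusion accordingly), or, equivalently, subdivides each $\mathcal B_{\delta_i^{(1)}}(X_i^{(1)})$ once into a bounded number of balls of radius small enough to guarantee that the subsequent rescaling fits inside $B_2$. In either case the $C^{1/2}$-bound $M$ is preserved under scaling, so the constant $C(M)$ from Lemma \ref{last-cone} remains fixed throughout the iteration — this scale invariance is the crux of the argument.
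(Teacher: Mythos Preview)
Your proof is correct and follows essentially the same iteration scheme as the paper: apply Lemma \ref{last-cone}, rescale into each leftover ball, reapply, and sum the geometric series. You are more explicit than the paper about why the rescaled functions satisfy the hypotheses of Lemma \ref{last-cone} (the domain-fitting issue), which the paper simply absorbs into the phrase ``apply again Lemma \ref{last-cone} rescaled.'' One small imprecision: your sentence ``$R_k$ \ldots\ has zero $\mathcal H^{n-1}$-measure'' is not quite right as stated, since $R_k$ is a fixed set at stage $k$; what you mean is that $\bigcap_k R_k$ has $\mathcal H^{n-1}$-measure zero (the radii shrink like $2^{-k/(n-1)}$, so the covers are admissible for $\mathcal H^{n-1}_\delta$ for every $\delta$), or equivalently that the displayed inequality passes to the limit because the good sets $G_k$ are increasing with uniformly bounded measure.
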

\begin{proof}
By Lemma \ref{last-cone}, $$F(u) \cap \mathcal B_1 \subset \Gamma \cup \bigcup_{i=1}^m \mathcal B_{\delta_i}(X_i)$$ with $\mathcal H^{n-1}(\Gamma) \leq C(M),$ and  $$\sum_{i=1}^m \delta_i^{n-1} \leq 1/2.$$ For each ball $\mathcal B_{\delta_i}(X_i)$ we apply again Lemma \ref{last-cone} rescaled and obtain that
$$F(u) \cap \mathcal B_{\delta_i}(X_i) \subset \Gamma_i \cup \bigcup_{j=1}^{m_i} \mathcal B_{\delta_{ij}}(X_{ij})$$ with 
 $\mathcal H^{n-1}(\Gamma_i) \leq C(M)\delta_i^{n-1},$ and  $$\sum_{j=1}^{m_i} \delta_{ij}^{n-1} \leq \frac 1 2 \delta_i^{n-1}.$$ Now for each ball $\mathcal B_{\delta_{ij}}(X_{ij})$ we apply the same argument and after $l$ such steps we find that 
 $$F(u) \cap \mathcal B_1 \subset \tilde \Gamma \cup \bigcup_{q=1}^r \mathcal B_{\delta_q}(X_q)$$ with $$\mathcal H^{n-1}(\tilde \Gamma) \leq C(M)(1 + \frac 1 2 + \ldots + \frac{1}{2^{l-1}})$$ and $$\sum_q \delta_q^{n-1} \leq 2^{-l},$$ which implies the conclusion.
\end{proof}

\begin{rem}The same argument as above can be used to show that the non-local minimal surfaces defined in \cite{CRSa} have locally finite $\mathcal H^{n-1}$ measure.\end{rem}

{\it Proof of Theorem \ref{thm1}.} The proof follows from Theorem \ref{dimension2} and Lemmas \ref{first-cones}-\ref{l_f}.
\qed

\section{Viscosity Solutions with Lipschitz Free Boundaries}

In this Section we prove our main Theorem \ref{thm2}, that is Lipschitz thin free boundaries are $C^{2,\alpha}.$ First we prove non-degeneracy of viscosity solutions with Lipschitz free boundaries.

\begin{lem} Assume $u$ is a viscosity solution in $B_2$ with $F(u)$ a Lipschitz graph in the $e_n$ direction  with Lipschitz constant L, $0 \in F(u)$.  Then, 
$$\|u\|_{C^{1/2}(B_1)} \leq  C(L)$$
and
$$\max_{B_r} u \geq c(L)r^{1/2}, \quad \text{for all $r\leq 1.$}$$
\end{lem}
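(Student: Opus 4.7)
It is enough to establish the two universal bounds
\[
u(e_{n+1}) \leq C(L), \qquad \max_{B_1} u \geq c(L),
\]
since the $C^{1/2}$ estimate then follows from Lemma~\ref{optimal} and the non-degeneracy for all $r\leq 1$ reduces to the case $r=1$ via the scaling $u_\lambda(X) := \lambda^{-1/2} u(\lambda X)$, which preserves the equation, the point $0\in F(u)$, and the Lipschitz constant $L$. Both universal bounds are proved by sliding barrier arguments using translates of $U_0$, exploiting the cone inclusions
\[
\{x_n > L|x'|\} \cap \mathcal{B}_1 \subset \{u>0\}, \qquad \{x_n < -L|x'|\} \cap \mathcal{B}_1 \subset \{u=0\},
\]
which follow from $F(u)=\{x_n=h(x')\}$ with $h(0)=0$ and $|\nabla h|\leq L$.

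For the upper bound, consider the strict supersolution family $W_t(X):=(1+\delta)\,U_0(x_n+t,x_{n+1})$, $\delta>0$ small. For $t\ge 2L+2$, the hyperplane $F(W_t)=\{x_n=-t\}$ sits inside the zero cone in $\mathcal{B}_{3/2}$, so $F(W_t)\cap\mathcal{B}_{3/2}\subset\{u=0\}$. Start with $t$ so large that $W_t\ge u$ on $\overline{B_{3/2}}$ (possible by continuity of $u$ on compacts) and slide $t$ down to $2L+2$. A first contact cannot occur: (i) in the interior of $\{u>0\}\cap\{W_t>0\}$ by the strong maximum principle applied to the harmonic function $W_t-u$; (ii) on $F(u)\cap\{W_t>0\}$, since there $W_t>0=u$; (iii) on $F(W_t)\cap\{u>0\}$, empty by the above; (iv) on $F(u)\cap F(W_t)$, forbidden by the viscosity supersolution property of $u$ (Def.~\ref{defvisc}) against the strict comparison supersolution $W_t$ (Def.~\ref{defsub}, $1+\delta>1$). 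Hence $u\le W_{2L+2}$ on $B_{3/2}$, giving $u(e_{n+1})\le C(L)$.

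The non-degeneracy is obtained symmetrically with the strict subsolution family $V_t(X):=(1+\delta)\,U_0(x_n-t,x_{n+1})$: for $t\ge L$ one has $V_t\equiv 0$ on $\{u=0\}\cap\mathcal{B}_{3/2}$, and on the complement $V_t$ decays as $|x_{n+1}|/\sqrt{t}$, so starting with $t$ large gives $V_t\le u$ on $\overline{B_{3/2}}$. Sliding $t$ down and applying the analogous exclusions yields $V_t\le u$ on $B_{3/2}$ for $t$ down to some $t_L$ of order $L$; evaluating at $\tfrac12 e_n$ gives $\max_{B_1}u\ge V_{t_L}(\tfrac12 e_n)\ge c(L)$.

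The main obstacle is the contact analysis at a point $X_0\in F(u)\cap F(W_t)$ (resp.\ $F(V_t)$) when $F(u)$ is only Lipschitz. Although the comparison barrier has a smooth (affine) free boundary, applying Definition~\ref{defvisc} requires a one-sided expansion of $u$ at $X_0$, which is supplied by the Lipschitz cone condition (giving a ball-tangency hypothesis on one side) together with Lemma~\ref{oneside} and the non-uniform $C^{1/2}$ bound of Lemma~\ref{optimal}. The strict slope inequality $1+\delta>1$ then produces the contradiction needed, closing the bootstrap.
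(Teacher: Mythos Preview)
Your sliding-barrier approach is different from the paper's, but it has a genuine gap: in both directions you never rule out first contact on $\partial B_{3/2}$. When you decrease $t$ and look for the first touching point of $W_t$ (resp.\ $V_t$) with $u$, nothing prevents that point from lying on the sphere $\partial B_{3/2}$, where neither the strong maximum principle nor the viscosity free-boundary condition gives a contradiction. Since $u$ is only defined in $B_2$ and you have no a priori control of $u$ on $\partial B_{3/2}$ (that is exactly what you are trying to prove), the sliding cannot be closed. There are two secondary issues as well. First, a strict comparison \emph{supersolution} in the sense dual to Definition~\ref{defsub} must have slope $\alpha<1$, so $W_t=(1+\delta)U_0(\cdot+t)$ is in fact a subsolution; this happens to be harmless only because for $t\ge 2L+2>3/2$ the set $F(W_t)$ never enters $B_{3/2}$, making your case~(iv) vacuous. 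Second, for the lower bound, stopping at any $t_L$ of order $L$ with $t_L>\tfrac12$ gives $V_{t_L}(\tfrac12 e_n)=(1+\delta)\,U_0(\tfrac12-t_L,0)=0$, so the final evaluation yields nothing.

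The paper avoids sliding entirely. For the upper bound it applies Lemma~\ref{optimal} directly at $e_n$ (where $\operatorname{dist}(e_n,F(u))\le 1$) to get $u(e_n)\le C$, and then uses Harnack inequality along a chain in $B_2^+(u)$, whose geometry is controlled by $L$, to obtain $u(e_{n+1})\le C(L)$. For the non-degeneracy it fixes a ball $\mathcal B_\mu(X_0)\subset\{u=0\}$ (inside the Lipschitz zero cone) tangent to $F(u)$ at some $Y_0$, compares $u$ with $(\max_{B_1}u)\,w$ where $w$ is harmonic in $B_{2\mu}(X_0)\setminus\mathcal B_\mu(X_0)$ with $w=0$ on the slit and $w=1$ on $\partial B_{2\mu}(X_0)$, and reads off the free-boundary condition at the one-sided regular point $Y_0$ to get $(\max_{B_1}u)\,\tfrac{\partial w}{\partial U_0}(Y_0)\ge 1$, hence $\max_{B_1}u\ge c(\mu)=c(L)$. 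Because the comparison domain and its boundary data are fixed in advance, no boundary-contact difficulty arises.
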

\begin{proof}
Since $$u(e_n) \leq C dist(e_n, F(u))^{1/2} \leq C$$ we can apply Harnack inequality and obtain 
$$u(e_{n+1}) \leq C(L)$$ which gives the first claim (in view of Lemma \ref{optimal}).

By scaling, it suffices to prove the second statement for $r=1.$ 

Let $\mu$ be small depending on $L$ and $X_0 \in \{u=0\} \cap B_{1/2}$ be such that $$\mathcal B_{\mu}(X_0) \subset \{u=0\}$$ and it is tangent to $F(u)$ at $Y_0.$ Let $w$ be the harmonic function in $B_{2\mu}(X_0) \setminus \mathcal B_{\mu}(X_0)$ which is zero on $\mathcal B_{\mu}(X_0)$ and equals 1 on $\p B_{2\mu}(X_0)$. Then, by the maximum principle
$$w\max_{B_1} u \geq u \quad \text{on $B_{2\mu}(X_0)$}.$$ Hence, since $Y_0$ is a regular point for $F(u)$ we obtain from the free boundary condition at $Y_0$
$$\max_{B_1}u \;\frac{\p w}{\p U_0}(Y_0) \geq 1\Rightarrow \max_{B_1} u \geq c(\mu).$$
\end{proof}

In view of Proposition \ref{compactness} and the previous lemma we obtain the following compactness result for viscosity solutions with Lipschitz free boundaries.

\begin{cor}\label{cor62}Let $u_k$ be a sequence of viscosity solutions in $B_2$ with $F(u_k)$ uniformly Lipschitz, $0 \in F(u_k)$. Then there exists a subsequence $u_{k_l}$ such that $$u_{k_l} \to u_*, \quad F(u_{k_l}) \to F(u_*) \quad \text{uniformly in $B_1$}$$ with $u_*$ a viscosity solution in $B_1$. \end{cor}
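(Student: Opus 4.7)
The plan is to apply the existing compactness result, Proposition~\ref{compactness}, which requires two ingredients: uniform convergence $u_{k_l}\to u_*$ and Hausdorff convergence $\{u_{k_l}=0\}\to\{u_*=0\}$ in $B_1$. Both will be produced from the preceding lemma, which supplies a uniform $C^{1/2}$ bound and uniform non-degeneracy depending only on $L$. The bound $\|u_k\|_{C^{1/2}(B_1)}\leq C(L)$ together with $u_k(0)=0$ (since $0\in F(u_k)$) makes $\{u_k\}$ equicontinuous and uniformly bounded, so Arzel\`a--Ascoli yields a subsequence converging uniformly on compact subsets of $B_1$ to some $u_*\in C^{1/2}(B_1)$, with $u_*\geq 0$ and even in $x_{n+1}$.

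For the zero sets, write $F(u_k)=\{x_n=g_k(x')\}$ where $g_k$ is an $L$-Lipschitz function vanishing at $0$; Arzel\`a--Ascoli applied to the $g_k$ allows us, after passing to a further subsequence, to arrange $g_{k_l}\to g_*$ uniformly on compact sets in $\R^{n-1}$ with $g_*$ itself $L$-Lipschitz. Since the Lipschitz graph hypothesis pins the positive phase to lie on one consistent side of $F(u_k)$, the sets $\{u_{k_l}=0\}$ are of the form $\{(x,0):x_n\leq g_{k_l}(x')\}\cap\mathcal B_1$ and therefore converge in the local Hausdorff distance to $\{(x,0):x_n\leq g_*(x')\}\cap\mathcal B_1$.

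It remains to identify this Hausdorff limit with $\{u_*=0\}\cap\mathcal B_1$. The inclusion $\supset$ is easy: strictly below the limit graph, $u_{k_l}(x,0)=0$ eventually, so by uniform convergence $u_*(x,0)=0$, and the closure of this region is $\{x_n\leq g_*(x')\}$. The reverse inclusion is the crux of the argument and is precisely where uniform non-degeneracy enters: if $x_n>g_*(x')$, then $(x,0)$ stays at distance bounded below by some $d>0$ from $F(u_{k_l})$ for all large $l$, and combining the bound $\max_{\mathcal B_r}u_{k_l}\geq c(L)r^{1/2}$ at the nearest free boundary point with an interior Harnack chain in the positive phase gives the pointwise lower bound $u_{k_l}(x,0)\geq c(L,d)>0$; uniform convergence then forces $u_*(x,0)>0$. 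With both convergences in hand, Proposition~\ref{compactness} yields that $u_*$ solves \eqref{FB} in $B_1$, and since $F(u_*)=\{x_n=g_*(x')\}$ is the uniform graph limit of the $F(u_{k_l})$, the stated convergence of free boundaries follows as well. The principal obstacle is the reverse inclusion above, since without a quantitative non-degeneracy estimate a thin positive sliver of $\{u_{k_l}>0\}$ could in principle collapse in the limit; it is exactly the bound of the previous lemma that rules this out.
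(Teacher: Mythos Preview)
Your proposal is correct and follows exactly the route the paper indicates: the paper gives no detailed proof but simply states that the corollary follows ``in view of Proposition~\ref{compactness} and the previous lemma,'' and you have filled in precisely those details---the uniform $C^{1/2}$ bound for Arzel\`a--Ascoli, the Lipschitz-graph compactness for the zero sets, and the non-degeneracy to identify the Hausdorff limit of $\{u_{k_l}=0\}$ with $\{u_*=0\}$. The only step worth a remark is the Harnack-chain lower bound: to make it airtight you should note that the point realizing $\max_{B_r}u_{k_l}\ge c(L)r^{1/2}$ lies at distance $\gtrsim_L r$ from $F(u_{k_l})$ by the optimal regularity estimate (Lemma~\ref{optimal}), so that it can indeed be joined to $(x,0)$ by a chain of balls of comparable radius in the positive phase $B_1\setminus\{(y,0):y_n\le g_{k_l}(y')\}$.
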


Next we show that positive harmonic functions $v$ (not necessarily viscosity solutions) are monotone in the $e_n$ direction in a neighborhood of $F(v)$, if $F(v)$ is a Lipschitz graph.

\begin{prop}[Monotonicity around $F(v)$]\label{mon_prop}Assume that $v\geq 0$ solves $\Delta v=0$ in $B^+_1(v)$, and that $F(v)$ is a Lipschitz graph in the $e_n$ direction in $\mathcal B_1$ with Lipschitz constant $L,$ and $0 \in F(v).$ Then $v$ is monotone in the $e_n$ direction in $B_\delta$, with $\delta$ depending on $L$ and $n$. \end{prop}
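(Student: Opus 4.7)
The plan is to prove that $\partial_{e_n} v \geq 0$ throughout some small ball $B_\delta$ with $\delta = \delta(L,n)$. In fact I will establish the stronger statement that for every unit vector $\tau$ in an open cone $\Gamma \subset \R^n \times \{0\}$ around $e_n$ (with aperture depending only on $L$) and every small $s>0$, the translate $v_{s\tau}(X) := v(X+s\tau)$ satisfies $v_{s\tau} \geq v$ in $B_\delta$.

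\textit{Step 1 (cone from the Lipschitz graph).} Writing $F(v) = \{x_n = h(x')\}$ with $|\nabla h| \leq L$, the estimate $h(x'+s\tau') \leq h(x') + Ls|\tau'|$ shows that for any unit $\tau$ with $\tau\cdot e_n > L|\tau - (\tau\cdot e_n)e_n|$ one has $\{v > 0\} + s\tau \subset \{v>0\}$ inside $\mathcal B_1$. Equivalently, $\{v_{s\tau}>0\} \supset \{v>0\}$, so both $v$ and $v_{s\tau}$ are non-negative harmonic in the common slit domain $B^+_{1-s}(v)$.

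\textit{Step 2 (harmonic difference, non-negative on the thin set).} Set $w := v_{s\tau} - v$. Then $w$ is harmonic in $B^+_{1-s}(v)$, and on the thin boundary $\{v=0\} \cap \mathcal B_{1-s}$ one has $w = v_{s\tau} \geq 0$. It remains to rule out that $w$ develops negative values in a small ball $B_\delta$, which by the maximum principle could only be inherited from the outer boundary $\partial B_\delta \cap B^+_{1-s}(v)$.

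\textit{Step 3 (localize via boundary Harnack).} The core of the argument is a boundary Harnack principle for non-negative harmonic functions in the thin slit domain: applied to the pair $v$, $v_{s\tau}$ (the second of which vanishes on a set strictly contained in $\{v=0\}$), it gives that $v_{s\tau}/v$ is Hölder continuous up to $F(v)$. Combined with the uniform $C^{1/2}$ bound and the non-degeneracy just established, and with the one-sided asymptotic expansion $v \sim \alpha U_0$ at regular free boundary points (Lemma~\ref{oneside}), this yields $v_{s\tau} \geq v$ in $B_\delta$ for $\delta$ small depending only on $L$ and $n$, and hence monotonicity of $v$ in the $e_n$-direction in $B_\delta$.

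\textit{Main obstacle.} The principal technical difficulty is implementing the boundary Harnack step in the thin regime: the vanishing set $\{v=0\}$ is a Lipschitz subset of the hyperplane $\{x_{n+1}=0\}$ in $\R^{n+1}$ rather than a Lipschitz hypersurface, and $v$ vanishes on it like $d^{1/2}$ instead of linearly, so the classical Lipschitz-domain boundary Harnack does not apply verbatim. A viable workaround is either to invoke a thin-domain analog (obtainable via even extension and reflection across $\{x_{n+1}=0\}$), or to argue by compactness along blow-up sequences using Corollary~\ref{cor62}, reducing to the trivial cone $U_0$ where monotonicity in $e_n$ is manifest and stable under small perturbations of the Lipschitz graph.
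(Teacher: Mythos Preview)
Your Step~3 is where the argument breaks down, and the proposal itself flags this without resolving it. The boundary Harnack principle compares two nonnegative harmonic functions that vanish on the \emph{same} portion of the boundary; but $v_{s\tau}$ vanishes on a strict subset of $\{v=0\}$, so near points of $\{v=0\}\setminus\{v_{s\tau}=0\}$ the quotient $v_{s\tau}/v$ blows up and is certainly not H\"older up to $F(v)$. Even where the ratio is finite, H\"older continuity alone gives no information about whether it is $\ge 1$; your appeal to Lemma~\ref{oneside} requires a tangent ball from one side, which a merely Lipschitz $F(v)$ need not have at a given point. The fallback suggestions do not rescue the argument: Corollary~\ref{cor62} and the non-degeneracy you invoke are proved only for viscosity solutions, whereas Proposition~\ref{mon_prop} is stated for an arbitrary nonnegative harmonic $v$ with Lipschitz free boundary (no free boundary condition, no non-degeneracy); and reducing by blow-up to ``the trivial cone $U_0$'' presupposes that Lipschitz viscosity cones are trivial, which is Proposition~\ref{trivialcones} and is proved \emph{using} Proposition~\ref{mon_prop}, so the compactness route is circular.

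The paper's proof avoids all of this by comparing $v$ not to its own translate but to an auxiliary barrier $w$: one solves $\Delta w=0$ in a cylinder minus $\{v=0\}$ with $w=0$ on the slit and the lateral boundary and $w=1$ on the far cap $\{x_n=2L\}$. By the maximum principle $w$ is strictly increasing in $e_n$, and because $w$ vanishes on \emph{exactly} the same thin set as $v$, the boundary Harnack inequality (valid in this NTA slit domain) yields $v/w\in C^\alpha$. Hence $v/w$ is nearly constant in a small ball $B_{2\delta}$; after rescaling, one passes from $|\tilde v-\tilde w|\le C\eps$ to the gradient estimate $|\tilde v_n-\tilde w_n|\le C\eps\,\tilde w_n$ (another boundary Harnack, now for the derivatives, which both vanish on the slit) and concludes $\tilde v_n\ge(1-C\eps)\tilde w_n>0$. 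The missing idea in your attempt is precisely this auxiliary $w$: it supplies a monotone comparison function sharing the zero set of $v$, which is what makes boundary Harnack applicable and transfers the monotonicity to $v$.
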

\begin{proof} Assume by scaling that $v$ is defined in $B_{8L}$. Let $w$ be the harmonic function in $$\Omega := \{|(x', 0, x_{n+1})| \leq 1, |x_n| \leq 2L\} \setminus \{v=0\},$$ such that $$w=0 \quad \text{on $\p \Omega \setminus \{x_n=2L\} $}, \quad w=1 \quad \text{on $\{x_n =2L\} \cap  \p \Omega$ }. $$ Then $w$ is strictly increasing in the $e_n$ direction in $\Omega$ (by the maximum principle $w(X) \leq w(X+\eps e_n)$). By boundary Harnack inequality (\cite{CFMS}) $$\frac {v}{w} \in C^{\alpha}(B_{1/2}).$$ After multiplying $v$ by an appropriate constant we may assume that $\dfrac{v}{w}(0) =1$, and
obtain
$$|\frac  v w -1 |\leq \eps \quad \text{in $B_{2\delta}$},$$ for some $\eps$ small to be made precise later and $\delta$ depending on $\eps$, $L$ and $n$. 
For each $r \leq  \delta,$ let 

$$\tilde v(X) = \frac{v(rX)}{w(re_n)}, \quad \tilde w (X) =\frac{w(rX)}{w(re_n)}.$$

Hence $$|\frac{\tilde v}{ \tilde w} -1| \leq  \eps \quad \text{in $B_2$}, \quad \tilde w(e_n)=1.$$

In the region $$\mathcal C_{\mu_0}:= \{|x'| < \mu_0, 1-\mu_0 < |(x_n,x_{n+1})| < 1+\mu_0\} \setminus \{(x,0) \ 
| \  x_n <0\}$$ with $\mu_0$ small depending on $L$, we have (by Harnack inequality for $\tilde w$)$$|\tilde v-\tilde w| \leq \eps \tilde w \leq C(L)\eps.$$ Since $\tilde v-\tilde w$ is harmonic we obtain $$|\tilde v_n -\tilde w_n| \leq C(L)\eps \quad \text{in $\mathcal C_{\frac 3 4 \mu_0}.$}$$ Using that $\tilde v_n -\tilde w_n$ and $\tilde w_n$ are harmonic functions which vanish on $$
\p \mathcal C_{\mu_0} \cap \{x_n \leq 0,x_{n+1}=0\}$$ and $\tilde w_n \geq 0$ and $\tilde w_n(e_n) \geq c
(L) >0$ we obtain that \be\label{uw}|\tilde v_n - \tilde w_n| \leq C(L)\eps \tilde w_n \quad \text{in $\mathcal C_{\frac{\mu_0}{2}}$}.\ee   The bound $\tilde w_n(e_n) \geq c(L) >0$ follows from Harnack inequality for $\tilde w _n$. Indeed, $\tilde w(e_n)=1$ and $\tilde w (-e_n)=0$ thus we can find a point $\bar X$ on the line segment $$[-e_n + \eta e_{n+1}, e_n+\eta e_{n+1}], \quad \text{$\eta$ small }$$ where $\tilde w_n(\bar X) \geq c>0$ for some $c, \eta$ depending on $L.$

From \eqref{uw} we get $$\tilde v_n \geq \tilde w_n(1-C(L)\eps)>0 \quad \text{in $\mathcal C_{\frac{\mu_0}{2}}$},$$ provided that $\eps$ is chosen small depending on $L$. This inequality applied for all $r \leq \delta$ easily implies the conclusion.
\end{proof}

The key step in the proof of Theorem \ref{thm2} is to show that there are no non-trivial Lipschitz  viscosity cones. By the dimension reduction argument in the previous section, it suffices to prove that there are no non-trivial cones with $C^{2,\alpha}$ free boundary outside of the origin. Indeed we remark that Proposition \ref{prop} also holds for viscosity solutions, which can be easily checked directly from Definition \ref{defvisc}. Therefore, Lemma \ref{dimred} holds also for Lipschitz viscosity cones (see Remark \ref{rem2}).

\begin{prop}\label{trivialcones} All Lipschitz viscosity cones are trivial.
\end{prop}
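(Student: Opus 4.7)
The plan is to induct on the ambient dimension $n+1$, combining the dimension reduction already noted in the excerpt with a scale-exploitation of Lemma~\ref{utau}. By Lemma~\ref{dimred} applied to viscosity cones (as the excerpt notes, Proposition~\ref{prop} extends to viscosity solutions, so the dimension-reduction machinery of Section~5 carries over), the presence of any singular free-boundary point of $U$ away from the origin would produce a non-trivial Lipschitz viscosity cone in $\R^n$ and contradict the inductive hypothesis. Hence I may assume $F(U)$ is $C^{2,\alpha}$ at every point of $F(U)\setminus\{0\}$. The base case $n=1$ is immediate, since any $1/2$-homogeneous even viscosity solution in $\R^{1+1}$ must be a rotation of $U_0$ itself.

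For the inductive step I fix any $X_0\in F(U)\setminus\{0\}$, set $\tau:=X_0/|X_0|$ (which is tangent to $F(U)$ at $X_0$ since $F(U)$ is a cone), and let $\nu$ be the unit normal to $F(U)$ at $X_0$ in $\R^n\times\{0\}$. The crucial point is that since $U$ agrees with its own $1/2$-homogeneous rescaling, Lemma~\ref{utau} applies at every rescaled point $\mu X_0$ with the same constant $K$ and with validity range $(0,\mu R_0)$, where $R_0$ is comparable to $|X_0|$ by scaling invariance of Theorem~\ref{V_approx}. This gives $|\p_\tau U(\mu X_0+r\nu)|\le K r^{1/2+\alpha}$ for $r\in(0,\mu R_0)$. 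Using $\p_\tau U(\mu Y)=\mu^{-1/2}\p_\tau U(Y)$ and substituting $s=r/\mu$ yields
$$|\p_\tau U(X_0+s\nu)|\le K\mu^{1+\alpha} s^{1/2+\alpha},\qquad s\in(0,R_0),$$
and letting $\mu\to 0^+$ forces $\p_\tau U(X_0+s\nu)=0$ for all $s\in(0,R_0)$. By $(-1/2)$-homogeneity of $\p_\tau U$ the vanishing propagates along rays from the origin to the entire open $2$-dimensional wedge in the plane spanned by $\tau$ and $\nu$.

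Since $U$ is even in $x_{n+1}$, so is $\p_\tau U$, and both $\p_\tau U$ and its normal derivative to $\{x_{n+1}=0\}$ vanish on the wedge. As $\p_\tau U$ is harmonic in the connected positive phase, a Cauchy-Kowalevski/unique continuation argument carried out in the $3$-dimensional slice containing the wedge (where the wedge is open in the hyperplane $\{x_{n+1}=0\}$ of the slice), together with real-analyticity of harmonic functions, yields $\p_\tau U\equiv 0$ throughout the positive phase of $U$, so $U$ is translation-invariant in the $\tau$-direction. Proposition~\ref{prop} (in its viscosity version) then identifies $U$ with a Lipschitz viscosity cone in one dimension less, which is trivial by the inductive hypothesis; hence $U$ itself is trivial.

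The hard step is justifying the unique-continuation propagation in ambient dimensions $n+1\ge 4$, where the $2$-dimensional wedge has codimension greater than one. The intended resolution is to combine, for fixed $\tau$, the wedges obtained from all points of the radial ray through $X_0$ with the $1/2$-homogeneity of $U$ to recover a codimension-$1$ vanishing set inside an appropriate $3$-dimensional slice, to which harmonic unique continuation (together with the even reflection across $\{x_{n+1}=0\}$) can then be applied.
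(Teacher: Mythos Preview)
Your scaling argument is circular and this is the fatal gap. You claim that Lemma~\ref{utau} applies at $\mu X_0$ ``with the same constant $K$''. But the constant $K$ in Lemma~\ref{utau} depends on the local $C^{2,\alpha}$ geometry of $F(U)$ near the base point (through the constant $\Lambda$ in Theorem~\ref{V_approx}), and for a cone this geometry at $\mu X_0$ is exactly the $\mu$-rescaling of the geometry at $X_0$. Concretely, since $U$ is $\tfrac12$-homogeneous, $\p_\tau U$ is $(-\tfrac12)$-homogeneous, and the estimate you obtain at $\mu X_0$ is \emph{forced} to be the rescaled estimate from $X_0$: if $|\p_\tau U(X_0+s\nu)|\le K s^{1/2+\alpha}$, then $|\p_\tau U(\mu X_0+r\nu)|=\mu^{-1/2}|\p_\tau U(X_0+(r/\mu)\nu)|\le K\mu^{-1-\alpha}r^{1/2+\alpha}$, so the correct constant at $\mu X_0$ is $K\mu^{-1-\alpha}$, not $K$. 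Plugging this back into your own substitution recovers exactly $|\p_\tau U(X_0+s\nu)|\le K s^{1/2+\alpha}$, i.e.\ no improvement. Homogeneity cannot manufacture new information here: any scale-invariant estimate applied to a scale-invariant function is tautological under rescaling. Consequently you never obtain $\p_\tau U\equiv 0$ on a wedge, and the downstream unique-continuation step (which you yourself flag as problematic in codimension $\ge 2$) never gets off the ground.

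The paper's proof supplies the missing \emph{lower} bound on a tangential derivative, which is what turns Lemma~\ref{utau} into a contradiction rather than a circular identity. Using Proposition~\ref{mon_prop}, $U$ is monotone in an open cone $\mathcal C$ of directions, and one picks $\tau\in\p\mathcal C$ tangent to $F(U)$ at some $X_0\ne 0$; then $U_\tau\ge 0$ in $\{U>0\}$. Either $U_\tau$ vanishes at an interior point, whence $U_\tau\equiv 0$ by the strong maximum principle and the induction closes via Proposition~\ref{prop}; or $U_\tau>0$, and the boundary Harnack inequality gives $U_\tau\ge \delta\,U$ near $X_0$, hence $U_\tau(X_0+r\nu)\ge c\,r^{1/2}$, which contradicts $U_\tau(X_0+r\nu)\le K r^{1/2+\alpha}$ from Lemma~\ref{utau}. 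The essential idea you are missing is this pairing of an upper bound (Lemma~\ref{utau}) with a genuine lower bound coming from monotonicity and boundary Harnack; without the latter, Lemma~\ref{utau} alone cannot force $\p_\tau U=0$.
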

\begin{proof} Let $U$ be a viscosity cone with Lipschitz free boundary and denote by $L$ the Lipschitz norm of $F(U),$ as a graph in the $e_n$ direction. We want to show that $U$ is trivial. By the discussion above we can assume that $F(U)$ is $C^{2,\alpha}$ outside of the origin. 

Now we prove the proposition by induction on $n$. The case $n=1$ is obvious. Assume the statement holds for $n-1.$

By Proposition \ref{mon_prop}, $U$ is monotone in the cone of directions $(\xi,0) \in \mathcal C \times \{0\}$ with $$\mathcal C:= \{\xi=(\xi', \xi_n) \in \R^n  \ | \ \xi_n \geq L |\xi'|\},$$ since $F(U)$ is a Lipschitz graph with respect to any direction $\xi \in \mathcal C^o.$  Moreover there is a direction $\tau \in \p \mathcal C, |\tau| =1$ such that $\tau$ is tangent to $F(U)$ at some point $X_0 \in F(U) \setminus \{0\}.$ Then, $$U_\tau \geq 0 \quad \text{in $\{U>0\}$}.$$

If $U_\tau=0$ at some point in $\{U>0\}$ then $U_\tau \equiv 0$, thus $U$ is constant in the $\tau$ direction, and by dimension reduction we can reduce the problem to $n-1$ dimensions thus by the induction assumption $U$ is trivial. Otherwise $U_\tau >0$ in $\{U>0\}$ and by boundary Harnack inequality  $$U_\tau \geq \delta U \quad \text{in a neighborhood of $X_0$, for some $\delta >0.$}$$ This contradicts Lemma \ref{utau} since for all $r$ small $$\frac \delta 2 r^{1/2} \leq \delta U(X_0 + \nu r) \leq U_\tau(X_0 + \nu r) \leq K r^{1/2 + \alpha.}$$ 
\end{proof}

\begin{rem}
As mentioned in the introduction, the argument above works also for the classical one-phase problem and the minimal surface equation. In the classical one-phase problem we need to use Hopf lemma and in the minimal surface equation we use the strong maximum principle.
\end{rem}
We are now finally ready to exhibit the proof of our main Theorem \ref{thm2}.

\

\textit{Proof of Theorem $\ref{thm2}$.}  First, we show that given a viscosity solution $u$ with Lipschitz free boundary in $B_1$, $0 \in F(u)$, we can find $\sigma>0$ small depending on $u$ such that $F(u)$ is a $C^{2,\alpha}$ graph in $B_{\sigma}.$ Indeed, there exists a blow-up sequence $u_{\lambda_k}$ which converges to a Lipschitz viscosity cone (see Remark \ref{rem2}), that in view of the previous lemma is trivial. The conclusion now follows from our flatness Theorem \ref{mainT} and Corollary \ref{cor62}.

Next we use compactness to show that $\sigma$ depends only on the Lipschitz constant $L$ of $F(u).$ For this we need to show that $F(u)$ is $\bar \eps$-flat in $B_r$ for some $r \geq \sigma$ depending on $L$. If by contradiction no such $\sigma$ exists then we can find a sequence of solutions $u_k$ and of $\sigma_k \to 0$ such that $u_k$ is not $\bar \eps$-flat in any $B_r$ with $r \geq \sigma_k.$ Then the $u_k$ converge uniformly (up to a subsequence) to a solution $u_*$ and we reach a contradiction since $F(u_*)$ is $C^{2,\alpha}$ in a neighborhood of 0 by the first part of the proof.
 \qed


\begin{thebibliography}{9999}

\bibitem[AP]{AP} Allen M., Petrosyan A., {\it A two phase problem with a lower dimensional free boundary.} Preprint 2011. 
\bibitem[AC]{AC} Alt H.W., Caffarelli L.A.,
\emph{Existence and regularity for a minimum problem with free
boundary}, J. Reine Angew. Math {\bf 325} (1981),105--144.
\bibitem[ACF]{ACF} Alt H.W., Caffarelli L.A., Friedman A.,  {\it Variational problems with two phases and their free boundaries} Trans. Amer. Math. Soc. {\bf 282} (1984), no. 2, 431Ð461. 
\bibitem[C1]{C1}
Caffarelli L.A., \emph{A Harnack inequality approach to the
regularity of free boundaries. Part I: Lipschitz free boundaries
are $C^{1,\alpha}$}, Rev. Mat. Iberoamericana \textbf{3} (1987), no.2, 139--162.
\bibitem[C2]{C2} Caffarelli L.A., \emph{A Harnack
inequality approach to the regularity of free boundaries. Part II:
Flat free boundaries are Lipschitz}, Comm. Pure Appl. Math.
\textbf{42} (1989), no. 1, 55--78.
\bibitem[C3]{C3}
Caffarelli L.A., \emph{A Harnack inequality approach to the
regularity of free boundaries. Part III: Existence theory, compactness, and dependence on $X$}. Ann. Scuola Norm. Sup. Pisa Cl. Sci. (4) {\bf 15} (1988), no. 4, 583Ð602 (1989). 

\bibitem[CFMS]{CFMS} Caffarelli L.A., Fabes E., Mortola S., Salsa S., {\it Boundary behavior of nonnegative solutions of elliptic operators in divergence form}, Indiana Univ. Math. J. {\bf 30} (1981), no. 4, 621Ð640. 
\bibitem[CJK]{CJK} Caffarelli L.A.,  Jerison D.,  Kenig C.E., {\it  Global energy minimizers for free boundary problems and full regularity in three dimensions,} Noncompact problems at the intersection of geometry, analysis, and topology, 83Ð97, Contemp. Math., {\bf 350}, Amer. Math. Soc., Providence, RI, 2004. 
\bibitem[CRSa]{CRSa} Caffarelli L., Roquejoffre J.M., Savin O., {\it Non local minimal surfaces,} Comm. Pure Appl. Math., {\bf 63} (2010), 1111--1144.
\bibitem[CRS]{CRS} Caffarelli L.A., Roquejoffre J-M., Sire Y., \emph{ Variational problems with free boundaries for the fractional Laplacian}, J. European Math. Soc.,
{\bf 12} (2010), 1151--1179.

\bibitem[CS]{CS}{ Caffarelli L.A.,  Salsa S.}, {\it A geometric approach to free boundary problems}, {Graduate Studies in Mathematics},
        {\bf 68}, {American Mathematical Society}, {Providence, RI}, {2005} .
\bibitem[DG]{DG} De Giorgi E., {\it Sulla differenziabilit\`a e l'analiticit\`a delle estremali degli integrali multipli regolari,} (Italian) Mem. Accad. Sci. Torino. Cl. Sci. Fis. Mat. Nat. (3) {\bf 3} 1957 25Ð43. 
\bibitem[DJ1] {DJ1}   De Silva D., Jerison D., \emph{A singular energy minimising free boundary,}  J. Reine Angew. Math.  {\bf 635}  (2009),  1--21.
\bibitem[DJ2]{DJ2} De Silva D., Jerison D.,  {\it A gradient bound for free boundary graphs,} Comm. Pure Appl. Math. {\bf 64} (2011), no. 4, 538Ð555.

\bibitem[DR]{DR} De Silva D., Roquejoffre J-M., {\it Regularity in a one-phase  free boundary problem for the fractional Laplacian.} Preprint 2010. To appear in 	
Annales de l'Institut Henri Poincare.
\bibitem[DS]{DS} De Silva D., Savin O.,  \emph{ $C^{2,\alpha}$ regularity of flat free boundaries for the thin one-phase problem,} Preprint 2011, arXiv:1111.2513. To appear in Journal of Diff. Equations.
\bibitem[SV]{SV}Savin O., Valdinoci E., {\it Regularity of nonlocal minimal cones in dimension $2$. }Preprint 2012. arXiv:1202.0973. To appear in Calc. Var. and PDE.
\bibitem[W]{W} Weiss E. S., {\it  Partial regularity for weak solutions of an elliptic free boundary problem,} Comm. Partial Differential Equations {\bf 23} (1998), no. 3-4, 439Ð455.

\end{thebibliography}
\end{document}